\theoremstyle{thmstyleone}%
\newtheorem{theorem}{Theorem}[section]
\newtheorem{proposition}[theorem]{Proposition}%
\newtheorem{lemma}[theorem]{Lemma}%
\newtheorem{corollary}[theorem]{Corollary}
\theoremstyle{thmstyletwo}%
\newtheorem{remark}{Remark}%
\theoremstyle{thmstylethree}%
\newtheorem{definition}{Definition}%
\renewcommand{\div}{\mathrm{div}}
\providecommand{\norm}[1]{\left\| #1 \right\|}
\newcommand{\e}{\mathsf{a}}
\newcommand{\R}{\mathbb{R}}
\newcommand{\N}{\mathbb{N}}
\begin{document}

\title[Trace regularity]{Trace regularity of solutions to the Navier equations}


\author[1]{\fnm{Jerin Tasnim} \sur{Farin}}\email{21jtf2@queensu.ca}

\author*[2]{\fnm{Giusy} \sur{Mazzone}}\email{giusy.mazzone@queensu.ca}


\affil*[1,2]{\orgdiv{Department of Mathematics and Statistics}, \orgname{Queen's University}, \orgaddress{\street{48 University Ave}, \city{Kingston}, \postcode{K7L 3N6}, \state{Ontario}, \country{Canada}}}



%
\abstract{
 We present results on the trace regularity of the stress vector on the boundary of an elastic solid satisfying the time-dependent, displacement-traction problem for the Navier equations of linear elasticity in a bounded domain of $\R^3$. Specifically, the solid's displacement is subject to Dirichlet- and Neumann-type conditions on different portions of its boundary and possibly non-zero body forces and initial data. Our regularity results are reminiscent of the so-called ``hidden trace regularity'' results for solutions to the scalar wave equation obtained in \cite{LLT-86}.  
}

\keywords{
linear elasticity, hyperbolic equation, hidden regularity, stress vector
}


\pacs[MSC2020 Classification]{
74B05, 
35L20, 
35L53, 
74H30 
}

\maketitle

\section{Introduction}\label{sec:intro}
The Navier equations of linear elasticity provide a mathematical model for the Lagrangian description of the deformation of a linear elastic isotropic material. The constitutive equation for this type of material is given by the following generalized {\em Hooke’s law} for the {\em first Piola--Kirchhoﬀ} stress in terms of the displacement field $u$:
\begin{equation}\label{eq:piola-stress}
P(u):=2\mu e(u)+\lambda \mathrm{div}u,\qquad e(u):=\frac 12 (\nabla u+(\nabla u)^T),
\end{equation}
where $\mu$ and $\lambda$ are positive constants, called {\em Lam\'e constants}, and $e(u)$ is the (Lagrangian) {\em infinitesimal strain tensor}. The vector field $P(u)\cdot n$ is called {\em stress vector}, and it gives a measure of the 
surface force per unit 
area acting on a material surface whose unit outward normal is $n$. In this paper, we investigate the regularity of the stress vector $P(u)\cdot n$ on the boundary of a linear elastic isotropic material, along various types of solutions to the Navier equations. The importance of our results is two-fold. First, our trace regularity results do not immediately follow from the standard energy balances satisfied by the solutions to our differential equation. This is reminiscent of the well-known results in \cite{LLT-86} concerning the so-called ``hidden trace regularity'' for the normal gradient of solutions to the scalar wave equation. Secondly, we focus on the stress vector $P(u)\cdot n$ rather than the normal gradient $\nabla u\cdot n$. While this choice complicates our analysis, the stress vector has a well defined physical meaning, thus making our results more relevant from the physical point of view and applicable to other problems in continuum mechanics. 

Our analysis focuses on the following initial-boundary value problem for the Navier equations of linear elasticity in terms of the (Lagrangian) displacement filed $u$: 
\begin{equation}\label{eq:Navier-intro}
    \begin{aligned}
            &\frac{\partial^{2}u}{\partial t^2} =\mathrm{div} P(u) + F  &&\qquad \text{in } \Omega \times (0,T), \\
            &u(\cdot,0) =u_{0} &&\qquad \text{in } \Omega ,\\
            &\frac{\partial u}{\partial t}(\cdot,0) = u_{1} &&\qquad  \text{in } \Omega, \\
            &u =g && \qquad \text{on } \Gamma_{0} \times (0,T),\\
            & P(u)\cdot n =0 && \qquad \text{on } \Gamma_{1} \times (0,T). 
    \end{aligned}
\end{equation}
In the above, we assume that the solid occupies the bounded domain $\Omega:=\mathcal S\setminus\overline{\mathcal C}$ where $\mathcal S$ and $\mathcal C\subset\R^3$ are bounded domains with smooth boundaries\footnote{This assumption could be relaxed; The degree of smoothness of the boundary would change depending on the smoothness required to the solutions to our equations.} $\Gamma_1:=\partial \mathcal S$ and $\Gamma_0:=\partial\mathcal C$, respectively. An important assumption in our work is that the surface area of $\Gamma_0$ is strictly positive \footnote{As a matter of fact, one could switch the roles of $\Gamma_0$ and $\Gamma_1$, and assume that the surface area of $\Gamma_1$ is zero. All of our results would still hold. }. Also, we recall that the tensor $P$ has been defined in \eqref{eq:piola-stress}. The vector fields $F$, $u_0$, $u_1$, and $g$ denote the external body force, initial displacement, initial velocity, and the prescribed boundary displacement, respectively. Without loss of generality, we have set the solid's density to be equal to one. From the physical point of view, the first equation in \eqref{eq:Navier-intro} represents the balance of linear momentum for the solid. The fact that the first Piola--Kirchhoﬀ is symmetric implies that also the balance of angular momentum is satisfied. The boundary conditions mean that we prescribe the displacement on $\Gamma_0$, while the exterior boundary $\Gamma_1$ is {\em stress-free}, i.e., no external force is applied there. From the mathematical point of view, the first equation in \eqref{eq:Navier-intro} is a hyperbolic partial differential equation 
{\em not in divergence form}, in that it can not be written as a hyperbolic evolution equation $\displaystyle \frac{du}{dt}=Au+F$ on a suitable Banach space, 
with $A$ an elliptic linear operator, with smooth coefficients $a_{ij}$, of the form  
\[
Au=\sum^3_{i,j=1}\frac{\partial}{\partial x_i}\left(a_{ij}(x,t)\frac{\partial u_k}{\partial x_j}\right)\mathsf{a}_k,
\]
with respect to an orthonormal basis $\{\mathsf{a}_1,\mathsf{a}_2,\mathsf{a}_3\}$ (associated to a Cartesian coordinate system). We further note that our equation can not be even considered as a ``lower-order perturbation'' of a system of wave equations (with constant wave speeds). As a matter of fact 
\[
\div P(u)=\mu\Delta u+(\lambda+\mu)\nabla \div u. 
\]
Concerning the boundary conditions, \eqref{eq:Navier-intro}$_4$ is a {\em Dirichlet boundary condition}, while \eqref{eq:Navier-intro}$_5$ is a {\em Neumann-type boundary condition} as we prescribe only a combination of the components of the gradient of $u$ on the outer boundary $\Gamma_1$, and not the full normal gradient. 

Our main trace regularity results for the stress vector $P(u)\cdot n$ on the boundary $\Gamma_0$ are stated in Theorem \ref{thm trace regularity with nonzero B.C.} (for weak solutions to \eqref{eq:Navier-intro} in the sense of Remark \ref{rm:distributional-formulation}), Theorem \ref{trace regularity from adjoint weak solution} (for solutions to \eqref{eq:Navier-intro} in the sense of transposition, see Definition \ref{weak solution in the sense of adjoint-isomorphism}), and Theorem \ref{th:constant-indep-T} (for strong solutions, i.e., solutions satisfying \eqref{eq:Navier-intro} a.e. in space and time). These results can be roughly summarized by stating that, given a boundary datum $g$ on $\Gamma_0$, the vector field $P(u)\cdot n$ (along solutions of \eqref{eq:Navier-intro}) is ``one degree less smooth than $g$'' on $\Gamma_0$. These trace regularity results do not follow from the standard regularity of the solutions on $\Omega$, say, from the standard energy estimates and the trace theorem. But, they are all derived through a careful analysis of the equation and a suitable choice of multipliers (see the proofs of Theorems \ref{thm trace regularity with zero B.C.} and \ref{th:strong trace regularity} for a choice of such multipliers). This is inspired by the work \cite{LLT-86} for the scalar wave equation with Dirichlet boundary conditions. For the corresponding Neumann problem, we refer the interested reader to \cite{LT-90}. 

Our results are ``global in time'' existence and regularity results, in the sense that each solution exists and satisfies the stated regularity properties in the same time span $(0,T)$ where the data $F$ and $g$ are defined. A particular feature of our work is that we are able to provide an explicit dependence on $T$ of the constants appearing in our estimates (see equation \eqref{eq:c(T)}). Actually, for strong solutions to \eqref{eq:Navier-intro}, we can make this constant independent of $T$ (see Theorem \ref{th:constant-indep-T}). We believe that this is a remarkable novelty compared to the existing works on the scalar wave equation. It is also a very useful property when dealing with multi-phase problems in which the elastodynamics problem \eqref{eq:Navier-intro} is coupled with other equations, like the Navier--Stokes equations (for fluid-structure interaction problems, see e.g., \cite{kukavica12,kukavica14,raymond2014,kukavica17,kukavica18,boulakia19,kukavica24,mazzone25}). In these types of problems, the coupling arises through continuity conditions on physical quantities (like velocities and stresses) at the fluid-solid interface. Sharp trace regularity results become then a fundamental tool to achieve a proof for the existence of solutions. The majority of the literature just cited either considers a wave-type equation (i.e., \eqref{eq:Navier-intro} with $\lambda+\mu=0$) or  \eqref{eq:Navier-intro} with Dirichlet boundary conditions only, i.e., when a Dirichlet condition is imposed on $\Gamma_1$ too (instead of our Neumann-type one). The corresponding trace results are then used without a proof (with possibly a reference to \cite{LLT-86}). To the best of our knowledge, the results here presented are the first trace regularity results for the Navier equations with mixed Dirichlet- and Neumann-type boundary conditions. Our proofs are novel as they overcome technicalities related to the structure of the equation. As already mentioned, we also exploit the dependence on the maximal existence time $T$ in the constants of our various estimates. We believe that our work could be used as a reference for other problems in continuum mechanics (besides the already cited fluid-structure interaction problems).   

We conclude this introduction with an outline of our paper. In Section \ref{sec:notation}, we introduce some notation on the functional setting used throughout our paper. The main results are stated and proved in Section \ref{sec:results}. Appendix \ref{sec:equalities} provides some useful equalities in multivariable calculus. Finally, in Appendix \ref{sec:equalities2}, we present abstract regularity results for Hilbert space-valued functions. Some of these results can be traced back in \cite{LM-1}. We refine existing results and exploit the time-dependence of some constants. These are novel results of independent interest. 

\section{Notation and basic functional setting}\label{sec:notation}

Throughout the text, we use Einstein summation convention for repeated indices (unless otherwise specified, indices run from 1 to $3$). The symbol $|\cdot|$ is used for both the Euclidean norm of vector fields and the norm associated with the Frobenius inner product of second-order tensors. Also, for a Banach space $X$ and $r>0$, we denote with $\mathbb{B}_{r}(x)$ 
the open ball centered at $X$ with radius $r$. 

Let $\mathcal D$ be a domain in $\R^d$, $d\in \N\setminus\{0\}$, and $p\in [1,\infty]$. We denote by $L^p(\mathcal D)$ and $W^{m,p}(\mathcal D)$ the Lebesgue and Sobolev spaces endowed with the norms 
\[\begin{split}
&\norm{w}_{L^p(\mathcal D)}:=\left(\int_{\mathcal D} |w|^p\;d x\right)^{1/p}\qquad\text{for }p\in[1,\infty),
\\
&\norm{w}_{L^\infty(\mathcal D)}:=\text{ess\;sup}_{\mathcal D}|w(x)|,
\\
&\norm{w}_{W^{m,p}(\mathcal D)}:=\left(\sum_{|\alpha|\le m}\norm{D^\alpha w}^p_{L^p(\mathcal D)}\right)^{1/p},
\end{split}\]
respectively. In the above, $D^\alpha$ denotes the $\alpha$-th order weak derivative of $w$. In the particular case of $p=2$ and vector-valued functions, we recall that the $L^2$-norm is associated to the inner product 
\[
(u,v)_{L^2(\mathcal D)}:=\int_{\mathcal D}u\cdot v\; dx\qquad\qquad\text{for all }u,v\in L^2(\mathcal D). 
\]
We also use the notation $H^m(\mathcal D):=W^{m,2}(\mathcal D)$ for $m\in \N\setminus\{0\}$ and note that $H^m(\mathcal D)$ is a Hilbert space for each $m\in \N\setminus\{0\}$. 

The {\em fractional Sobolev spaces} are defined as follows for $s\in \R$:
\[
H^s(\R^d):=\left\{w\in \mathscr{S}'(\R^d):\; \int_{\R^d}(1+|\xi|^2)^s|\mathscr{F}u(\xi)|^2\; d\xi<\infty\right\},
\]
where $\mathscr{S}'(\R^d)$ is the set of all tempered distributions (i.e., the topological dual of the Schwartz space $\mathscr S$ of rapidly decaying $C^\infty$-functions in $\R^d$), and $\mathscr{F}$ denotes the Fourier transform\footnote{This is the extension by transposition of the classical Fourier transform $\mathscr{F}:\;\mathscr{S}\to \mathscr{S}$ (see \cite[Chapter 1, Section 1, p.45]{LM-1}). }. The space $H^s(\R^d)$ is endowed with the norm
\[
\norm{w}_{H^s(\R^d)}:=\left(\int_{\R^d}(1+|\xi|^2)^s|\mathscr{F}u(\xi)|^2\; d\xi\right)^{1/2}. 
\]
When $s\in \N$, we recover the classical Sobolev spaces, with the latter defining an equivalent norm (\cite[Chapter 1, Section 1, Theorem 1.2]{LM-1}). The above definition of fractional Sobolev spaces carries over domains with boundary provided that suitable extension operators exist (this is the case if $\mathcal D$ is a smooth bounded domain, for example\footnote{See \cite[Chapters 8 \& 11]{leoniBesov} for more general definitions concerning {\em extension domains}.}). In addition, the following characterization via complex interpolation holds: $H^s(\mathcal D)=[H^{s_0}(\mathcal D),H^{s_1}(\mathcal D)]_{\theta}$ with $s_0 < s_1\in \R$, $\theta\in (0,1)$ and $s=(1-\theta)s_0+\theta s_1$ (see \cite[Chapter 1, Section 9]{LM-1}), and there exists a positive constant $k_{\mathcal D}$ such that for all $w\in H^{s_1}(\mathcal D)$:
\[
\norm{w}_{H^s(\mathcal D)}\le k_{\mathcal D}\norm{w}^{1-\theta}_{H^{s_0}(\mathcal D)}\norm{w}^{\theta}_{H^{s_1}(\mathcal D)}. 
\]

As this paper concerns with the trace regularity of functions, we make use of the following fractional Sobolev spaces on the boundary $\partial \mathcal D$. From now on, and for simplicity, we assume that $\mathcal D$ is a bounded domain with smooth boundary and $d>1$. We start by recalling that $L^2(\partial \mathcal D):=L^2(\partial\mathcal D;\sigma)$, where $\sigma:=\mathcal H^{d-1}$ is the $(d-1)$-dimensional Hausdorff measure (see \cite[p. 249]{leoniSobolev}). Then, for $s\in (0,1)$, we define the {\em Besov space}\footnote{The notation used for these spaces is the same as for fractional Sobolev spaces on domains considered earlier. As a matter of fact, similar definitions holds for Besov spaces on domains with norm equivalent to the ones defined above for fractional Sobolev spaces (see \cite{DiNezza}). }
\[
H^s(\partial \mathcal D):=\left\{w\in L^2(\partial\mathcal D):\; \int_{\partial \mathcal D}\int_{\partial\mathcal D}\frac{|u(x)-u(y)|^2}{|x-y|^{d-1+2s}}\;d \sigma(y)d\sigma(x)<\infty\right\}.
\]
This is a Banach space with the norm 
\[
\norm{w}_{H^s(\partial \mathcal D)}:=\left(\norm{w}_{L^2(\partial \mathcal D)}^2+\int_{\partial \mathcal D}\int_{\partial\mathcal D}\frac{|u(x)-u(y)|^2}{|x-y|^{d-1+2s}}\;d \sigma(y)d\sigma(x)\right)^{1/2}. 
\]
It is also an interpolation space between $L^2(\partial\mathcal D)$ and $H^1(\partial\mathcal D)$, specifically it can be characterized through complex interpolation as $H^s(\partial \mathcal D)=[L^2(\partial \mathcal D),H^1(\partial \mathcal D)]_s$ (see \cite[Chapter 1, Section 7, Theorem 7.7]{LM-1} for a more general result). Before we introduce the definition of Besov spaces for $s\ge1$, we recall that since $\partial \mathcal D$ is smooth, for each $x_0\in \partial \mathcal D$ there exists a rigid body motion $T_{x_0}:\; \R^d\to\R^d$ such that $T_{x_0}(x_0)=0$, and there exist a smooth function $f:\;\R^{d-1}\to \R$ with $f(0)=0$, a radius $r_{x_0}>0$, and a diffeomorphism $\Phi:x\in\mathbb{B}_{r_{x_0}}(x_0)\mapsto y\in \mathbb{B}_{r_{x_0}}(0)$ such that   
\begin{equation}\label{eq:parametrization-around-boundary}
y:=\Phi(x)=(T_{x_0}(x)\cdot \mathsf{e}^{1},T_{x_0}(x)\cdot \mathsf{e}^{2},\dots, T_{x_0}(x)\cdot \mathsf{e}^{d-1},T_{x_0}(x)\cdot \mathsf{e}^{d}-f(y_1,\dots,y_{d-1}))
\end{equation}
and 
\begin{equation}\label{eq:parametrization-around-boundary2}\begin{split}
&\Phi(\mathcal D\cap \mathbb{B}_{r_{x_0}}(x_0))=\mathbb{B}_{r_{x_0}}(0)\cap \R^d_+,
\\
&\Phi(\partial\mathcal D\cap\mathbb{B}_{r_{x_0}}(x_0)))=\{y\in \mathbb{B}_{r_{x_0}}(0):\; y_d=0\}=:\mathcal{B}_{r_0}. 
\end{split}\end{equation}
In the above,  $\{\mathsf{e}^{1},\mathsf{e}^{2},\dots,\mathsf{e}^{d}\}$ denotes the canonical basis of $\R^d$, and $\R^d_+:=\{y=(y_1,y_2,\dots,y_{d-1},y_d)\in \R^d:\; y_d>0\}$. We note that $\Phi$ admits a smooth inverse that we denote by $\Theta$. On the other side, since $\mathcal D$ is an open set, for every $x_0\in \mathcal D$, there exists $r_{x_0}>0$ such that $\mathbb{B}_{r_{x_0}}(x_0)\subset \mathcal D$. The union of all the balls $\mathbb{B}_{r_{x_0}}(x_0)$ defined above forms an open cover of $\overline{\mathcal D}$, which is compact. So, there exists a finite number $m$ of points $x^{1}, x^2, \dots, x^m\in \overline{\Omega}$, and corresponding radii $r_1,r_2,\dots,r_m>0$, such that 
\begin{equation}\label{eq:open-cover}
\overline{\mathcal D}\subset \bigcup_{i=1}^m\mathbb{B}_{r_i}(x^i). 
\end{equation}
Now, we consider a partition of unity $\{\psi_i\}_{i=1}^m$ subordinated to the family of balls $\{\mathbb{B}_{r_i}(x^i)\}_{i=1}^m$, and define the {\em Besov space} for $s\in \R$, $s\ge 1$:
\[
H^{s}(\partial\mathcal D):=\{w\in L^2(\partial\mathcal D):\; \norm{w}_{H^{s}(\partial \mathcal D)}<\infty
\} 
\]
where 
\[
\norm{w}_{H^{s}(\partial \mathcal D)}:=\left(\sum^m_{i=1}\norm{(\psi_iw)\circ \Theta|_{\mathcal{B}_{r_i}}}^2_{H^{s}(\R^{d-1})}\right)^{1/2}.
\]
Note that, given the properties of the partition of the unity $\{\psi_i\}^m_{i=1}$, the function $(\psi_iw)\circ \Theta|_{\mathcal{B}_{r_i}}$ is considered to be extended to zero outside the ball $\mathbb{B}_{r_i}(0)$. One can show that the latter definitions are independent of the choice of the reparametrization $\Phi$ as well as of the partition of unity $\{\psi_i\}_i$. These Besov spaces can be also characterized as ``trace spaces'' from the following {\em trace theorem} (which may be used throughout the text without explicitly referring to it). 
\begin{theorem}{\cite[Chapter 1, Section 9, Theorem 9.4.]{LM-1}}
Let $s>1/2$ and denote with $k$ the largest integer satisfying $k<s-1/2$. Let $\mathcal D\subset\R^d$ be a bounded domain with smooth boundary $\Gamma$. Then, the trace operator
\[
\gamma_{s,\Gamma}:\; w\in H^s(\mathcal D)\mapsto \gamma_{s,\Gamma}(w):=\left(w|_\Gamma,\frac{\partial w}{\partial n}|_\Gamma,\dots,\frac{\partial^{k} w}{\partial n^{k}}|_\Gamma\right)\in \prod^{k}_{j=0}H^{s-j-1/2}(\Gamma)
\]
is linear, continuous, surjective with a continuous linear right inverse.
\end{theorem}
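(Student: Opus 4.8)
The plan is to reduce the statement to the model case of the half-space $\R^d_+$ by means of the localization apparatus already set up above, and then to treat the half-space by a partial Fourier transform in the tangential variables. First I would use the finite open cover \eqref{eq:open-cover}, the diffeomorphisms $\Phi,\Theta$, and the partition of unity $\{\psi_i\}_{i=1}^m$ to write $w=\sum_i\psi_i w$ and to transport each boundary piece $(\psi_i w)\circ\Theta$ to a function on $\R^d_+$ whose trace lives on the hyperplane $\{y_d=0\}\cong\R^{d-1}$. Since $\partial\mathcal D$ is smooth, the charts are diffeomorphisms with bounded derivatives of all orders, so the change of variables preserves the $H^s$-scale up to equivalence of norms both on the domain and on the boundary; the pieces $\psi_i$ centered at interior points $x^i\in\mathcal D$ carry no trace and are discarded. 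This reduces both the continuity and the surjectivity claims to the corresponding statements on $\R^d_+$.

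For the half-space I would establish continuity directly. Writing $y=(y',y_d)$ and taking the Fourier transform only in $y'$ (dual variable $\xi'$), the trace of the $j$-th normal derivative satisfies $\mathscr F_{y'}\!\left(\partial_{y_d}^j w|_{y_d=0}\right)(\xi')=c\int_\R (i\xi_d)^j\,\mathscr F w(\xi',\xi_d)\,d\xi_d$ in terms of the full Fourier transform $\mathscr F w$. By Cauchy--Schwarz,
\[
\bigl|\mathscr F_{y'}(\partial_{y_d}^j w|_{y_d=0})(\xi')\bigr|^2
\le \Bigl(\int_\R \frac{\xi_d^{2j}}{(1+|\xi|^2)^{s}}\,d\xi_d\Bigr)\Bigl(\int_\R (1+|\xi|^2)^{s}|\mathscr F w(\xi',\xi_d)|^2\,d\xi_d\Bigr),
\]
and the substitution $\xi_d=(1+|\xi'|^2)^{1/2}\tau$ shows that the first factor equals $C_{j,s}(1+|\xi'|^2)^{\,j+1/2-s}$, the constant $C_{j,s}$ being finite precisely when $s>j+1/2$, i.e.\ exactly when $j\le k$. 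Multiplying by $(1+|\xi'|^2)^{s-j-1/2}$ and integrating in $\xi'$ yields $\norm{\partial_{y_d}^j w|_{y_d=0}}_{H^{s-j-1/2}(\R^{d-1})}\le C\norm{w}_{H^s(\R^d)}$, which is the continuity of the $j$-th component of $\gamma_{s,\Gamma}$.

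For surjectivity together with a continuous right inverse I would construct an explicit lifting in the half-space. Given tangential data $(g_0,\dots,g_k)$ with $g_j\in H^{s-j-1/2}(\R^{d-1})$, I would fix a cutoff profile $\phi\in C_c^\infty(\R)$ with $\phi\equiv1$ near $0$ and define the extension through its tangential Fourier transform by the frequency-adapted ansatz
\[
\mathscr F_{y'} w(\xi',y_d)=\sum_{\ell=0}^{k}\,\widehat{h_\ell}(\xi')\,\frac{\bigl((1+|\xi'|^2)^{1/2}y_d\bigr)^{\ell}}{\ell!}\,\phi\bigl((1+|\xi'|^2)^{1/2}y_d\bigr).
\]
Because $\phi$ is constant near the origin, the system matching the traces $\partial_{y_d}^j w|_{y_d=0}=g_j$ is \emph{diagonal}, giving $\widehat{h_j}(\xi')=(1+|\xi'|^2)^{-j/2}\widehat{g_j}(\xi')$. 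Verifying $w\in H^s(\R^d_+)$ with $\norm{w}_{H^s}\le C\sum_j\norm{g_j}_{H^{s-j-1/2}}$ reduces, after Plancherel in $y'$, to one-dimensional estimates in $y_d$ at each frequency $\xi'$; the rescaling $y_d\mapsto(1+|\xi'|^2)^{1/2}y_d$ built into the ansatz turns the profiles into fixed functions and produces exactly the weight $(1+|\xi'|^2)^{s-\ell-1/2}$ multiplying $|\widehat{g_\ell}(\xi')|^2$ (checked at integer $s$ and extended by interpolation for general $s$). Transporting this lifting back through $\Theta$ and reassembling with $\{\psi_i\}$ produces a bounded linear right inverse on $\mathcal D$, with linearity of $\gamma_{s,\Gamma}$ immediate from the construction.

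The main obstacle I anticipate is the lifting step: one must realize all $k+1$ normal-derivative conditions simultaneously while keeping the $H^s$-norm under control, and the delicate point is that the natural decay length in the normal direction is the frequency-dependent scale $(1+|\xi'|^2)^{-1/2}$, which is precisely what makes the weights come out right. The strict inequality $k<s-1/2$ is essential throughout---both for the convergence of the integral defining $C_{j,s}$ in the continuity estimate and for the convergence of the corresponding normal integrals in the norm bound for the lifting---so the endpoint half-integer case is deliberately excluded. The remaining bookkeeping (equivalence of norms under the smooth charts, summation over the partition of unity, and independence of the construction from the choices of $\Phi$ and $\{\psi_i\}$) is routine given the smoothness of $\partial\mathcal D$.
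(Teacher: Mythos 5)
The paper does not prove this statement at all: it is quoted verbatim from Lions--Magenes \cite[Chapter 1, Section 9, Theorem 9.4]{LM-1} and used as a black box, so there is no internal proof to compare against. Your argument is the classical direct proof of the trace theorem and it is essentially sound: the localization via the charts $\Phi,\Theta$ and the partition of unity, the Cauchy--Schwarz computation with the substitution $\xi_d=(1+|\xi'|^2)^{1/2}\tau$ (which correctly isolates the condition $s>j+1/2$, i.e.\ $j\le k$), and the frequency-adapted diagonal lifting are all standard and correct. This is a genuinely different route from the cited source, which obtains the result through abstract interpolation theory for vector-valued functions (the spaces $W(a,b;X,Y)$ and the identification of trace spaces with $[X,Y]_\theta$); your Fourier-multiplier computation is more elementary and self-contained, at the price of having to invoke interpolation anyway to handle non-integer $s$ in the norm bound for the lifting, whereas the Lions--Magenes machinery treats all real $s$ uniformly. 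Two small points you should not leave implicit: the continuity estimate is carried out on $H^s(\R^d)$, so the reduction from $H^s(\mathcal D)$ (or $H^s(\R^d_+)$) requires the extension operator that the paper's definition of $H^s(\mathcal D)$ already presupposes; and under the boundary-flattening diffeomorphism the normal derivatives $\partial^j w/\partial n^j$ do not transform into $\partial_{y_d}^j$ exactly but only modulo a triangular system involving tangential derivatives of the lower-order traces, so both the continuity transfer and the recursive solution of the lifting data on $\mathcal D$ need that (routine) bookkeeping spelled out. Neither point is a gap in substance.
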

For $s>1/2$, we denote by $H^s_0(\mathcal D):=\{w\in H^s(\mathcal D):\; \gamma_s(w)=0\}$. 
With $H^{-1}(\mathcal D)$ we identify the completion of $L^2(\mathcal D)$ with respect to the following norm
\[
\sum_{w\in H^1_0(\mathcal D),\norm{w}_{H^1(\mathcal D)}\le 1}\left|(u,w)_{L^2(\mathcal D)}\right|
=:\norm{u}_{H^{-1}(\mathcal D)}. 
\]
It can be shown that $H^{-1}(\mathcal D)=(H^1_0(\mathcal D))'$ (see \cite[paragraph 3.13]{adams}). 

If $(X, \norm{\cdot}_X)$ is a Banach space and  
$p\in[1,\infty]$, $L^p(0,T;X)$ (resp. $H^k(0,T;X)$, $k\in \N$) denotes the space of strongly measurable functions $f$ from $[0,T]$ to $X$ for which 
\[
\begin{aligned}
&\left(\int_0^T \norm{f(t)}^p_X\; d t\right)^{1/p}<\infty\qquad&\text{for }p\in[1,\infty),
\\
&\text{ess sup}_{t\in(0,T)}\norm{f(t)}^p_X<\infty 
\end{aligned}\]
(resp. $\sum^k_{\ell=0}\left(\int_0^T \norm{\partial^\ell_t f (t)}^2_X\; d t\right)^{1/2}<\infty$). Similarly, $C^k([0,T];X)$ indicates the space of functions which are $k$-times differentiable with values in $X$, and having $\max_{t\in [0,T]}\norm{\partial^\ell_t \cdot}_X < \infty$, for all $\ell = 0,1,...,k$. 

Finally, we note that in our proofs we may use constants $C$, $k$, and/or $c_i$ which change from line to line. After equation \eqref{eq:c(T)}, ``$c(T)$'' denotes a positive constant depending on the time $T$ exactly as in \eqref{eq:c(T)} up to a possibly different multiplicative constant (independent of $T$). 

\section{Main results}\label{sec:results}
In this section, we present the main results of the paper concerning the trace regularity of the stress vector $P(u)\cdot n$ along various types of solution of the following problem (with the same notation introduced in Section \ref{sec:intro}): 
\begin{equation}\label{eq:Navier}
    \begin{aligned}
            &\frac{\partial^{2}u}{\partial t^2} =\mathrm{div} P(u) + F  &&\qquad \text{in } \Omega \times (0,T), \\
            &u(\cdot,0) =u_{0} &&\qquad \text{in } \Omega ,\\
            &\frac{\partial u}{\partial t}(\cdot,0) = u_{1} &&\qquad  \text{in } \Omega, \\
            &u =g && \qquad \text{on } \Gamma_{0} \times (0,T),\\
            & P(u)\cdot n =0 && \qquad \text{on } \Gamma_{1} \times (0,T). 
    \end{aligned}
\end{equation}
Throughout this section, we assume that $\mathcal \sigma(\Gamma_0)>0$. The {\em mixed boundary conditions} \eqref{eq:Navier}$_{4,5}$ require a careful choice of the functional setting for the analysis of the above problem. For $k\in \N\setminus\{0\}$, we introduce the Banach space 
\[
V^k(\Omega):=\{\varphi \in H^k(\Omega):\; \gamma_{k,\Gamma_0}(\varphi)=0\text{ on }\Gamma_0\}
\]
endowed with the norm $\norm{\cdot}_{H^k(\Omega)}$; This is also a Hilbert space with the same inner product of $H^k(\Omega)$. Throughout the text, and for $v\in L^p(0,T;V^k(\Omega))$ (and $p\in[1,\infty]$), we identify $\norm{v}_{L^p(0,T;H^k(\Omega))}$ with the norm $\norm{v}_{L^p(0,T;V^k(\Omega))}$ (a similar note holds for continuous functions with values in $V^k(\Omega)$). 

We notice that $V^k(\Omega)$ is separable and reflexive. In addition, $V^k(\Omega)$ is a dense subset of $L^2(\Omega)$. As a matter of fact, $V^k(\Omega)$ can be also characterized as the completion of the following space 
\[
\{\varphi\in C^\infty(\Omega):\; D^\alpha\varphi|_{\Gamma_0}=0\text{ for each }\alpha\in \N\}
\]
with respect to the $H^k$-norm. Concerning the dual, again for $k\in\N\setminus\{0\}$, we denote by $V^{-k}(\Omega)$ the completion of $L^2(\Omega)$ with respect to the following norm 
\[
\norm{v}_{V^{-k}(\Omega)}:=\sup_{w\in V^k(\Omega),\,\norm{w}_{H^k(\Omega)}\le 1}|(u,w)_{L^2(\Omega)}|.
\]
It can be shown that $V^{-k}(\Omega)=(V^k(\Omega))'$ (see \cite[paragraphs 3.13-3.14]{adams} for the analogous proof in the case of the classical Sobolev spaces). We indicate with $\langle\cdot,\cdot\rangle_{V{-k}(\Omega)}$ the duality pairings between $V^k(\Omega)$ and $V^{-k}(\Omega)$, and note that the following continuous embeddings hold for each $k\in\N\setminus\{0\}$:
\[
H^1_0(\Omega)\hookrightarrow V^k(\Omega)\hookrightarrow H^k(\Omega)\hookrightarrow L^2(\Omega)\hookrightarrow(H^k(\Omega))'\hookrightarrow V^{-k}(\Omega)\hookrightarrow H^{-1}(\Omega). 
\]
We are now ready to present the first important result on the trace regularity of the stress vector $P(u)\cdot n$ along {\em weak solutions} of \eqref{eq:Navier}.

\begin{theorem} \label{thm trace regularity with zero B.C.}
    Take $g=0$ in \eqref{eq:Navier}. If $u_0\in V^1(\Omega)$, 
    $u_1\in L^2(\Omega)$, 
    and $F \in L^{1}(0,T;L^2(\Omega))$, then there exists a unique weak solution $u$ of \eqref{eq:Navier} satisfying the following regularity properties:
    \begin{equation}\label{eq:regularity-distributional}
    u\in L^\infty(0,T;V^1(\Omega)),\quad \frac{\partial u}{\partial t}\in L^\infty(0,T;L^2(\Omega)),
    \quad \frac{\partial^2 u}{\partial t^2}\in L^2(0,T;V^{-1}(\Omega))
    \end{equation}
    In addition, the following trace regularity for the stress vector on $\Gamma_0$ holds: 
    \begin{equation} \label{trace regularity with zero B.C.}
        \norm{P(u)\cdot n}_{L^2(0,T;L^2(\Gamma_0))}\le C\sqrt{T}\left[
        \norm{u_0}_{H^1(\Omega)}
        +\norm{u_1}_{L^2(\Omega)}
        +\norm{F}_{L^1(0,T;L^{2}(\Omega))}\right],
    \end{equation}
    where $C$ is a positive constant independent of $T$. 
\end{theorem}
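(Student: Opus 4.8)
The plan is to first obtain the solution and the bound on the RHS of \eqref{trace regularity with zero B.C.} through the standard machinery for second--order hyperbolic systems, and only then to extract the hidden trace bound by a multiplier argument. I would cast \eqref{eq:Navier} (with $g=0$) in weak form against test functions $\varphi\in V^1(\Omega)$: integrating $\div P(u)$ by parts and using $\varphi|_{\Gamma_0}=0$ together with $P(u)\cdot n=0$ on $\Gamma_1$ makes both boundary contributions vanish, leaving the symmetric bilinear form $a(u,\varphi):=\int_\Omega\big(2\mu\, e(u){:}e(\varphi)+\lambda(\div u)(\div\varphi)\big)\,dx$. Since $\sigma(\Gamma_0)>0$ and $V^1(\Omega)$ functions vanish on $\Gamma_0$, Korn's inequality makes $a$ coercive on $V^1(\Omega)$, so a Galerkin (or semigroup) construction yields a unique weak solution with the regularity \eqref{eq:regularity-distributional}. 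Testing with $\partial_t u$ gives the energy identity for $E(t):=\tfrac12\norm{\partial_t u(t)}_{L^2(\Omega)}^2+\tfrac12 a(u(t),u(t))$, and the $L^1$--in--time forcing is absorbed by the usual Gronwall trick to give $\sup_{[0,T]}\sqrt{E}\le\sqrt{E(0)}+\tfrac1{\sqrt2}\norm{F}_{L^1(0,T;L^2(\Omega))}\le C D$, where $D:=\norm{u_0}_{H^1(\Omega)}+\norm{u_1}_{L^2(\Omega)}+\norm{F}_{L^1(0,T;L^2(\Omega))}$. Coercivity of $a$ makes $E$ control $\norm{u}_{V^1}^2$.

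\textbf{The multiplier and the structural identity.} For the trace bound I would use the multiplier $(h\cdot\nabla)u$, i.e. the vector field with components $h_k\partial_k u_i$, where $h\colon\overline\Omega\to\R^3$ is smooth, equals the outward normal $n$ on $\Gamma_0$, and vanishes in a neighbourhood of $\Gamma_1$; such $h$ exists because $\Gamma_0$ and $\Gamma_1$ are disjoint smooth compact surfaces. I would multiply \eqref{eq:Navier}$_1$ by $(h\cdot\nabla)u$, integrate over $\Omega\times(0,T)$, and integrate by parts in $t$ (from $\partial_{tt}u$) and in $x$ (from $\div P(u)$). The term that would otherwise obstruct the argument is $\int_\Omega P_{ij}\,h_k\,\partial_k\partial_j u_i$; here I exploit that the first Piola--Kirchhoff stress is the $\nabla u$--gradient of the strain energy density, $P_{ij}=\partial W/\partial(\partial_j u_i)$ with $W:=\mu\, e(u){:}e(u)+\tfrac\lambda2(\div u)^2$, so that $P_{ij}\,h_k\,\partial_k\partial_j u_i=h_k\partial_k W=\div(hW)-(\div h)W$. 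This chain rule is precisely what absorbs the non-wave coupling $(\lambda+\mu)\nabla\div u$ from \eqref{eq:piola-stress} and lets the identity close even though \eqref{eq:Navier}$_1$ is not a perturbation of decoupled wave equations.

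\textbf{Boundary bookkeeping.} Because $h\equiv0$ near $\Gamma_1$, every boundary contribution on $\Gamma_1$ drops out---this is essential, since on $\Gamma_1$ one controls only $P(u)\cdot n=0$ and not the full gradient. On $\Gamma_0$ the Dirichlet condition $u=0$ forces $\partial_t u=0$ and $\nabla u=(\partial_n u)\otimes n$, whence $(h\cdot\nabla)u=\partial_n u$ and $\div u=n\cdot\partial_n u$. A direct algebraic computation with $v:=\partial_n u$ then gives, on $\Gamma_0$, both $W=\tfrac\mu2|v|^2+\tfrac{\mu+\lambda}2(n\cdot v)^2$ and $(P(u)\cdot n)\cdot v=\mu|v|^2+(\mu+\lambda)(n\cdot v)^2=2W$, so the $\Gamma_0$ boundary terms collapse to $\int_0^T\!\int_{\Gamma_0}W$. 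Moreover $|P(u)\cdot n|^2=\mu^2|v|^2+(\mu+\lambda)(3\mu+\lambda)(n\cdot v)^2$ is, like $W$, a positive-definite quadratic form in $(|v_\parallel|^2,|v_\perp|^2)$, so $|P(u)\cdot n|^2\le c\,W$ pointwise on $\Gamma_0$, giving $\norm{P(u)\cdot n}_{L^2(0,T;L^2(\Gamma_0))}^2\le c\int_0^T\!\int_{\Gamma_0}W$. The remaining interior terms are estimated by the energy: the endpoint term $\big[\int_\Omega\partial_t u_i\,h_k\partial_k u_i\big]_0^T$ and the forcing term $\int_0^T\!\int_\Omega F_i\,h_k\partial_k u_i$ are $\lesssim\sup E+\norm{F}_{L^1(0,T;L^2(\Omega))}\sup\sqrt E\lesssim D^2$, while the time-integrated terms $\int_0^T\!\int_\Omega\big((\div h)|\partial_t u|^2+P_{ij}(\partial_j h_k)\partial_k u_i-(\div h)W\big)$ are $\lesssim T\sup E\lesssim T D^2$; collecting these and taking square roots produces the factor $\sqrt T$ claimed in \eqref{trace regularity with zero B.C.}, with the constant depending only on $\mu,\lambda,\Omega,h$.

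\textbf{Rigor and the main obstacle.} The multiplier computation requires $u$ smoother than \eqref{eq:regularity-distributional}---indeed the trace $P(u)\cdot n|_{\Gamma_0}$ is exactly the object whose a priori non-regularity is the point of the theorem. I would therefore carry out all the integrations by parts for smooth data, where strong solutions exist and the manipulations are legitimate, obtaining \eqref{trace regularity with zero B.C.} with a constant involving only the data norms on its right-hand side; the general case then follows by density, since the estimate is continuous in those norms and weak solutions are limits of smooth ones. I expect the main obstacle to be precisely the closing of the multiplier identity for the \emph{system}: the chain-rule identity $P_{ij}h_k\partial_k\partial_j u_i=\div(hW)-(\div h)W$ must be verified carefully for the full Hooke's law \eqref{eq:piola-stress} (not just the Laplacian part), and the reduction of the $\Gamma_0$ boundary terms relies on the tensorial relations $(P(u)\cdot n)\cdot\partial_n u=2W$ and $|P(u)\cdot n|^2\le cW$, which are special to the elastic stress and have no direct analogue in the scalar setting of \cite{LLT-86}.
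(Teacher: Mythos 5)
Your proposal is correct, and its skeleton coincides with the paper's: the same energy estimate with $T$-independent constant, the same multiplier $\nabla u\cdot h$ with $h$ extending the normal on $\Gamma_0$ and vanishing on $\Gamma_1$, the same use of $u=0$ on $\Gamma_0$ to reduce the boundary algebra, and the same density/approximation argument to justify the formal computation for weak solutions. Where you genuinely diverge is in how the key volume term is closed. The paper splits $\div P(u)=\mu\Delta u+(\lambda+\mu)\nabla\div u$ and performs two separate integrations by parts (its equations \eqref{eq:rhs_1st_part} and \eqref{rhs_2nd term}), then reduces the resulting boundary integrals via the component identities of Appendix \ref{sec:equalities}, notably $|\nabla u|=|\nabla u\cdot n|$ and $\div u\,(n\cdot\nabla u\cdot n)=(\div u)^2$ on $\Gamma_0$. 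You instead keep $P$ intact and use the Pohozaev/Rellich-type chain rule $P_{ij}h_k\partial_k\partial_j u_i=h\cdot\nabla W$ with $W=\mu\,e(u){:}e(u)+\tfrac\lambda2(\div u)^2$, which is valid precisely because $P_{ij}=\partial W/\partial(\partial_j u_i)$, and then observe $(P(u)\cdot n)\cdot\partial_n u=2W$ and $|P(u)\cdot n|^2\le c\,W$ on $\Gamma_0$ (I checked: with $v=\partial_n u$ one has $W=\tfrac\mu2|v|^2+\tfrac{\mu+\lambda}2(n\cdot v)^2$ and $|P(u)\cdot n|^2=\mu^2|v|^2+(\mu+\lambda)(3\mu+\lambda)(n\cdot v)^2$, so the comparison holds since $\mu,\lambda>0$; your collapsed boundary quantity $\int_{\Gamma_0}W$ agrees with the paper's $\tfrac12\int_{\Gamma_0}[\mu|\nabla u|^2+(\lambda+\mu)(\div u)^2]$). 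Your route is more structural and makes transparent why the non-wave coupling $(\lambda+\mu)\nabla\div u$ causes no obstruction; the paper's route is more elementary and reuses the same Appendix identities later (e.g., in Proposition \ref{proposition for stress vector} and Theorem \ref{thm trace regularity with nonzero B.C.}, where $g\neq0$ breaks the clean identity $(P(u)\cdot n)\cdot\partial_n u=2W$ and the two boundary pieces must be tracked separately anyway). Both yield \eqref{trace regularity with zero B.C.} with the same $\sqrt T$ and a $T$-independent constant.
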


\begin{remark}\label{rm:distributional-formulation}
    The solution $u$ found in Theorem \ref{thm trace regularity with zero B.C.} satisfies \eqref{eq:Navier} (with $g=0$) in the {\em weak sense}, i.e., 
    \begin{equation}\label{eq:weak-ditributional-formulation}
    \left\langle \frac{\partial^2 u}{\partial t^2},\xi\right\rangle_{V^{-1}(\Omega)}+B(u,\xi)=(F,\xi)_{L^2(\Omega)}
    \end{equation}
    for all $\xi\in V^1(\Omega)$ and a.e. $t\in (0,T)$, where 
    $B:\; V^1(\Omega)\times V^1(\Omega)\to \R$ is a symmetric bilinear form defined by 
    \begin{equation}\label{eq:bilinear}
    B(v,w):=2\mu\int_{\Omega} e(v):e(w)\; dx+\lambda\int_{\Omega}\div v\,\div w\; dx
    \end{equation}
     for all $v,w\in V^1(\Omega)$. By the Poincar\'e-Korn inequality (see \cite[Theorem 6.3-4.]{ciarlet}),  $B$ is a positive definite bilinear form, and it satisfies   
    \begin{equation}\label{eq:B-norm-equivalent}
    k_1\norm{v}^2_{H^1(\Omega)}\le B(v,v)
    \le k_2\norm{v}^2_{H^1(\Omega)}\qquad \text{for all }v\in V,
    \end{equation}
    and some constants $0<k_1<k_2$. 
\end{remark}

\begin{proof}
    The proof of the existence of weak solutions to \eqref{eq:Navier} follows by a classical Faedo-Galerkin method (see \cite{jerin} and \cite[Section 10.1]{valli}).  
    We present here only the apriori (formal energy) estimates. Multiply \eqref{eq:Navier} by $\partial u/\partial t$ and integrate the resulting equation over $\Omega$, using integration by parts, the homogeneous boundary condition ($g=0$), and H\"older's inequality, we find 
    \[
    \frac 12 \frac{d}{d t}\left[ \norm{\frac{\partial u}{\partial t}}^2_{L^2(\Omega)}+B(u,u)\right]
    \le \norm{\frac{\partial u}{\partial t}}_{L^2(\Omega)}\norm{F}_{L^2(\Omega)}.
    \]
    Integrate the latter displayed equation with respect to time and use \eqref{eq:B-norm-equivalent} together with Young's inequality (with $\varepsilon\in (0,1)$), to get 
    \begin{equation}\label{eq:apriori-distributional0bc}\begin{split}
        \norm{\frac{\partial u}{\partial t}(t)}^2_{L^2(\Omega)}&\le \varepsilon \norm{\frac{\partial u}{\partial t}}^2_{L^\infty(0,T;L^2(\Omega))} 
        + \frac{1}{\varepsilon} \norm{F}^2_{L^1(0,T;L^2(\Omega))}
        +\norm{u_1}^2_{L^2(\Omega)}
        +c_1\norm{u_0}^2_{H^1(\Omega)},
        \\
        B(u(t),u(t))&\le 2\norm{\frac{\partial u}{\partial t}}_{L^\infty(0,T;L^2(\Omega))}\norm{F}_{L^1(0,T;L^2(\Omega))}
        +\norm{u_1}^2_{L^2(\Omega)}
        +c_1\norm{u_0}^2_{H^1(\Omega)}.
    \end{split}\end{equation}
    From the first inequality in \eqref{eq:apriori-distributional0bc} and a suitable choice of $\varepsilon$ (for example, $\varepsilon=1/4$), we obtain that 
    \[
    \norm{\frac{\partial u}{\partial t}}_{L^\infty(0,T;L^2(\Omega))}
    \le c_2\left(\norm{F}_{L^1(0,T;L^2(\Omega))}
    +\norm{u_1}_{L^2(\Omega)}
    +\norm{u_0}_{H^1(\Omega)}\right),
    \]
    with a positive constant $c_2$ independent of $T$. Using the latter estimate and \eqref{eq:B-norm-equivalent} in the second inequality of \eqref{eq:apriori-distributional0bc} also yields the following  
    \[
    \norm{u}_{L^\infty(0,T;H^1(\Omega))}\le c_3\left(\norm{F}_{L^1(0,T;L^2(\Omega))}
    +\norm{u_1}_{L^2(\Omega)}
    +\norm{u_0}_{H^1(\Omega)}\right)
    \]
    with a positive constant $c_3$ independent of $T$. As a byproduct of the construction of the solution (and the above estimates), it immediately follows that the solution operator is continuous in the following sense:
    \begin{equation}\label{eq:energy-estimate-distr-0bc}
    \norm{u}_{L^\infty(0,T;H^1(\Omega))}+\norm{\frac{\partial u}{\partial t}}_{L^\infty(0,T;L^2(\Omega))}
    \le k\left(\norm{F}_{L^1(0,T;L^2(\Omega))}
    +\norm{u_1}_{L^2(\Omega)}
    +\norm{u_0}_{H^1(\Omega)}\right)
    \end{equation}
    where $k>0$ is a constant independent of $T$.  
    Uniqueness of solutions holds because the equation is linear. Furthermore, using \cite[Chapter 3, Lemma 8.1]{LM-1}, the above solution can be modified on a set of measure zero so that 
    \begin{equation*}
    u\in C([0,T];H^1(\Omega)),\qquad \frac{\partial u}{\partial t}\in C([0,T];L^2(\Omega))
    \end{equation*}
    and 
    \begin{equation}\label{eq:energy-estimate-distr-0bc-continuous}
    \norm{u}_{C([0,T];H^1(\Omega))}+\norm{\frac{\partial u}{\partial t}}_{C([0,T];L^2(\Omega))}
    \le k\left(\norm{F}_{L^1(0,T;L^2(\Omega))}
    +\norm{u_1}_{L^2(\Omega)}
    +\norm{u_0}_{H^1(\Omega)}\right).
    \end{equation}

    It remains to prove \eqref{trace regularity with zero B.C.}, and this follows from the estimate 
    \begin{equation} \label{inequality with multiplier}
              \norm{P(u) \cdot n}_{L^{2}(0,T;L^2(\Gamma_{0}))} \leq 
              C\sqrt T\left(\norm{F}_{L^1(0,T;L^2(\Omega))}
    +\norm{u_1}_{L^2(\Omega)}
    +\norm{u_0}_{H^1(\Omega)}\right),
    \end{equation}
    with some positive constant $C$ independent of $T$. 
    Thanks to equations \eqref{eq:sym-nabla-u-dot-n-norm} and \eqref{eq:div^2}, to prove the latter 
    it is enough to show that 
      \begin{equation}\label{eq:nabla-div-u-boundary-distrib}
      \norm{\nabla u}_{L^2(0,T;L^2(\Gamma_0))}+\norm{\mathrm{div} u}_{L^2(0,T;L^2(\Gamma_0))}
      \le C\sqrt T \left(\norm{F}_{L^1(0,T;L^2(\Omega))}
    +\norm{u_1}_{L^2(\Omega)}
    +\norm{u_0}_{H^1(\Omega)}\right).
      \end{equation}
      We prove this through formal estimates from \eqref{eq:Navier}, by assuming that $F$ (and the corresponding solution $u$) is smooth, then we conclude by the continuity of the solution operator given by \eqref{eq:energy-estimate-distr-0bc} and the weakly lower semicontinuity of the norm. 
    
    Consider a (time-independent) extension $h\in C^{1}(\overline{\Omega})$ of the outward unit normal $n$, such that 
    \[
    h = n \text{ on $\Gamma_{0}$}, \qquad h = 0 \text{ on $\Gamma_{1}$}. 
    \]
    We next take the $L^2$-inner product of equation \eqref{eq:Navier} with $\nabla u \cdot h$, and we get
      \begin{equation} \label{equation after multiplication in thm1}
          \begin{aligned}
              \int_{0}^{T}\int_{\Omega} \frac{\partial^{2} u }{\partial t^{2}}\cdot(\nabla u \cdot h) \;dx dt&=  \mu\int_{0}^{T}\int_{\Omega} \Delta u \cdot (\nabla u \cdot h) \;dx dt 
              +(\lambda+\mu)\int_{0}^{T}\int_{\Omega} \nabla(\mathrm{div}u)\cdot (\nabla u \cdot h) \;dx dt 
              \\ 
              &\quad + \int_{0}^{T}\int_{\Omega} F \cdot(\nabla u \cdot h) \;dx dt,
          \end{aligned}
      \end{equation}
      here we have used the fact $\mathrm{div} P(u)=\mu \Delta u +(\lambda+\mu) \nabla(\mathrm{div}u)$.
      
      Let us examine each term of the above equation separately. Throughout next calculations, we use Fubini's Theorem, integration by parts and the properties of $h$ on the boundary. The term on the left-hand side of \eqref{equation after multiplication in thm1} can be rewritten as follows: 
     \begin{equation} \label{eq:lhs}
        \begin{aligned}
            \int_{0}^{T}\int_{\Omega} \frac{\partial^{2} u }{\partial t^{2}}\cdot (\nabla u \cdot h) &\;dx dt
            \\
            &=\int_\Omega \frac{\partial u}{\partial t}(\cdot,T)\cdot(\nabla u(\cdot,T)\cdot h)\; dx
            -\int_{\Omega}u_{1} \cdot (\nabla u_{0} \cdot h) dx
            + \frac 12 \int_{0}^{T}\int_{\Omega}\left(\frac{\partial u}{\partial t}\right)^{2} \mathrm{div} h \;dx dt.
         \end{aligned}
     \end{equation} 
    Let us consider the first term on the right-hand side of \eqref{equation after multiplication in thm1}. Proceeding similarly as before and using equation \eqref{eq:nabla-u-dot-n-norm}, we find that
     \begin{equation} \label{eq:rhs_1st_part}
         \begin{aligned}
         \int_{\Omega}\Delta u \cdot (\nabla u \cdot h)\; dx &
         =\int_{\Gamma_{0}}|\nabla u\cdot n|^2\;d\sigma
             -\frac 12\int_{\Gamma_{0}}|\nabla u|^2\;d\sigma
             +\frac12\int_{\Omega}|\nabla u|^2\div{h}\; dx
             -\int_{\Omega}\nabla u : (\nabla u\cdot\nabla h)\; dx
        \\
        &=\frac 12\int_{\Gamma_{0}}|\nabla u|^2\; d\sigma
             +\frac12\int_{\Omega}|\nabla u|^2\div{h}\; dx
             -\int_{\Omega}\nabla u : (\nabla u\cdot\nabla h)\; dx.
         \end{aligned}
     \end{equation}
     Now, for the second term on the right-hand side of \eqref{equation after multiplication in thm1}, using \eqref{eq:div^2} we obtain that 
     \begin{equation}\label{rhs_2nd term}
     \begin{aligned}
         &\int_{\Omega} \nabla(\mathrm{div}u)\cdot (\nabla u \cdot h) \;dx 
         \\
         &=\int_{\Gamma_{0}}\div {u} (n\cdot \nabla u \cdot n)\; d\sigma
         -\frac{1}{2}\int_{\Gamma_{0}}(\div{u})^2\;d\sigma
         +\frac 12 \int_{\Omega}(\div{u})^2\div{h}\; dx
         -\int_{\Omega}\div{u}\nabla u:(\nabla h)^T\; dx
         \\
         &=\frac 12 \int_{\Gamma_{0}}(\div{u})^2\;d\sigma
         +\frac 12 \int_{\Omega}(\div{u})^2\div{h}\; dx
         -\int_{\Omega}\div{u}\nabla u:(\nabla h)^T\; dx.
     \end{aligned}      
     \end{equation}
    From \eqref{eq:lhs}, \eqref{eq:rhs_1st_part} and \eqref{rhs_2nd term} we finally have that 
     \begin{equation} \label{eq:final equality of thm 3.1}
         \begin{aligned}
            &\frac 12 \int_{0}^{T}\int_{\Gamma_{0}}\left[\mu |\nabla u|^{2}+(\lambda+\mu)(\div{u})^{2}\right]\;d\sigma dt
            \\
            &\quad=\int_\Omega \frac{\partial u}{\partial t}(\cdot,T)\cdot(\nabla u(\cdot,T)\cdot h)\; dx
            -\int_{\Omega}u_{1} \cdot (\nabla u_{0} \cdot h) dx
            + \frac 12 \int_{0}^{T}\int_{\Omega}\left(\frac{\partial u}{\partial t}\right)^{2} \mathrm{div} h \;dx dt
            \\
            &\qquad -\frac\mu2\int^T_0\int_{\Omega}|\nabla u|^2\div{h}\; dxdt
            +\mu \int^T_0\int_{\Omega}\nabla u : (\nabla u\cdot\nabla h)\; dxdt
            \\
            &\qquad -\frac{\lambda+\mu}{2} \int^T_0\int_{\Omega}(\div{u})^2\div{h}\; dxdt
            +(\lambda+\mu)\int^T_0\int_{\Omega}\div{u}\nabla u:(\nabla h)^T\; dxdt
            \\
            &\qquad -\int_{0}^{T}\int_{\Omega} F \cdot(\nabla u \cdot h) \;dx dt. 
        \end{aligned}
     \end{equation}
     
     Each term on the right-hand side of \eqref{eq:final equality of thm 3.1} can be estimated using H\"older's inequality and \eqref{eq:energy-estimate-distr-0bc-continuous}, thus yielding \eqref{eq:nabla-div-u-boundary-distrib}. 
\end{proof}

\begin{remark}\label{rm:continuity-distributional-sln}
As already mentioned in the above proof, by \cite[Chapter 3, Lemma 8.1]{LM-1}, the weak solution $u$ can be modified on a set of measure zero so that 
\begin{equation}\label{eq:continuity-distributional-sln}
u\in C([0,T];V^1(\Omega)),\qquad \frac{\partial u}{\partial t}\in C([0,T];L^2(\Omega)).
\end{equation}
In particular, $u(\cdot,0)=u_0$ and $\displaystyle\frac{\partial u}{\partial t}(\cdot,0)=u_1$, and the following estimate holds with a positive constant $C$ independent of $T$:  
\begin{multline}\label{eq:sharp-energy-estimate-distr-0bc-continuous}
    \norm{u}_{C([0,T];H^1(\Omega))}+\norm{\frac{\partial u}{\partial t}}_{C([0,T];L^2(\Omega))}+\norm{P(u)\cdot n}_{L^2(0,T;L^2(\Gamma_0))}
    \\
    \le C(1+\sqrt T) \left(\norm{F}_{L^1(0,T;L^2(\Omega))}
    +\norm{u_1}_{L^2(\Omega)}
    +\norm{u_0}_{H^1(\Omega)}\right).
    \end{multline}
Furthermore, we have that 
\begin{equation}\label{eq:divPu-weak}
\div P(u)=\frac{\partial^2 u}{\partial t^2}-F\in L^2(0,T;V^{-1}(\Omega)).
\end{equation}
\end{remark}

Next, we consider even ``weaker'' solutions of \eqref{eq:Navier}, in the sense of the definition below. 
By Theorem \ref{thm trace regularity with zero B.C.}, we know that for any 
$\psi\in L^1(0,T;L^2(\Omega))$ 
there exists a unique weak solution of \eqref{eq:Navier} with $g=u_0=u_1=0$. In turn, this implies that the following problem admits a unique weak solution 
        \begin{equation} \label{eq with change of variable}
            \begin{aligned}
            \frac{\partial^{2}\phi}{\partial t^2} &=\mathrm{div} P(\phi) + \psi 
                     && \quad \text{in }\Omega \times (0,T), 
            \\
           \phi(\cdot,T) &=0; &&\quad\text{in }\Omega,
           \\
           \frac{\partial \phi}{\partial t}(\cdot,T) &= 0 &&\quad  \text{in }\Omega,
           \\
           P(\phi)\cdot n &=0 &&\quad  \text{on }\Gamma_{1}\times (0,T), 
           \\ 
           \phi &=0 &&\quad  \text{on } \Gamma_{0} \times (0,T). 
           \end{aligned}
        \end{equation}
In view of Remarks \ref{rm:distributional-formulation} and \ref{rm:continuity-distributional-sln}, we have that $\phi$ satisfies 
\begin{equation}\label{eq:IBVP-final-time}
\begin{aligned}
&\left\langle \frac{\partial^2 \phi}{\partial t^2},\xi\right\rangle_{V^{-1}(\Omega)}+B(\phi,\xi)=(\psi,\xi)_{L^2(\Omega)}&&\text{for all $\xi\in  V^1(\Omega)$ and a.e. $t\in (0,T)$,}
\\
&\phi(\cdot,T) =\frac{\partial \phi}{\partial t}(\cdot,T) = 0 &&\text{in }\Omega,
\end{aligned}
\end{equation}
where the bilinear form $B$ has been defined in \eqref{eq:bilinear}. In addition, the solution $\phi$ enjoys the estimates \eqref{eq:energy-estimate-distr-0bc} and \eqref{trace regularity with zero B.C.} with $u=\phi$, $F=\psi$ and zero initial data. In turn, the map
\begin{equation}\label{eq:isomorphism-S}
S:\; \phi \in L^{2}(0,T;V^{1}(\Omega)) \cap H^{1}(0,T;L^{2}(\Omega)) \cap H^{2}(0,T;V^{-1}(\Omega)) 
\mapsto \psi\in L^1(0,T;L^2(\Omega))
\end{equation}
is an isomorphism with a bounded inverse. 

Assume that $\psi$ is smooth in \eqref{eq with change of variable}. Consider \eqref{eq:Navier} with all the data being smooth. Now multiply equation \eqref{eq:Navier} by $\phi$ and equation \eqref{eq with change of variable} by $u$, after integrating by parts, we get
\begin{equation} \label{definition of weak soln}
\begin{split}
    \int_{0}^{T} (u,\psi)_{L^{2}(\Omega)} dt&=\langle u_1,\phi(\cdot,0)\rangle_{V^{-1}(\Omega)}
    -\left(u_0,\frac{\partial \phi}{\partial t}(\cdot,0)\right)_{L^2(\Omega)}
    \\
    &\quad
    -\int_{0}^{T} \int_{\Gamma_{0}}g \cdot (P(\phi)\cdot n) d\sigma dt
    +\int^T_0\langle F,\phi\rangle_{V^{-1}(\Omega)}\; dt.
\end{split}\end{equation} 
The latter equation justifies the following definition. 
\begin{definition}[Solutions in the sense of transposition] 
\label{weak solution in the sense of adjoint-isomorphism}
We say that $u\in L^\infty(0,T;L^2(\Omega))$ 
is a {\em solution to \eqref{eq:Navier} in the sense of transposition} if \eqref{definition of weak soln} is satisfied for all $\psi\in L^1(0,T;L^2(\Omega))$,
with $\phi=S^{-1}\psi$ and $S$ defined in \eqref{eq:isomorphism-S}. 
\end{definition}
In the next lemma, we show that the class of solutions in the sense of transposition is non-empty. 
\begin{lemma} \label{existence of weak solution in the sense of adjoint-isomorphism}
If $F\in L^{1}(0,T;V^{-1}(\Omega))$, $u_{0}\in L^{2} (\Omega)$, $u_{1} \in V^{-1}(\Omega)$, and  $g \in L^{2}(0,T;L^2(\Gamma_{0}))$, then there exists a unique solution in the sense of transposition to \eqref{eq:Navier} satisfying 
    \begin{equation} \label{eq: regularity of very weak solution}
      \begin{split}
      u &\in L^{\infty}(0,T;L^{2}(\Omega)),
      \\
      \frac{\partial u}{\partial t} &\in L^{\infty}(0,T;V^{-1}(\Omega)). 
      \end{split}  
    \end{equation}  
In addition, there exists a positive constant $c(T)$, depending on $T$ (see \eqref{eq:c(T)} below), such that 
\begin{equation}\label{eq:energy-estimate-weak-adjoint}
\begin{split}
\norm{u}_{L^\infty(0,T;L^2(\Omega))}&+\norm{\frac{\partial u}{\partial t}}_{L^\infty(0,T;V^{-1}(\Omega))}
\\
&\quad\le  
c(T)\left(\norm{u_1}_{V^{-1}(\Omega)}+\norm{u_0}_{L^{2}(\Omega)}+\norm{g}_{L^{2}(0,T;L^2(\Gamma_{0}))}+\norm{F}_{L^1(0,T;V^{-1}(\Omega))}\right).
\end{split}
\end{equation}
Furthermore, after modification on a set of measure zero, 
\begin{equation} \label{eq:continuous regularity of very weak solution}
u \in C([0,T];L^2(\Omega)), \qquad \frac{\partial u}{\partial t} \in C([0,T];V^{-1}(\Omega)), 
\end{equation}  
and the above estimate holds with the $L^\infty(0,T;\cdot)$-norm replaced by the $C([0,T];\cdot)$-norm.
\end{lemma}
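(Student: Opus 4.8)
The plan is to construct $u$ by transposition, reading \eqref{definition of weak soln} as the defining identity. Given $\psi\in L^1(0,T;L^2(\Omega))$, let $\phi=S^{-1}\psi$ be the unique solution of the backward problem \eqref{eq with change of variable} furnished by the isomorphism $S$ of \eqref{eq:isomorphism-S} (i.e.\ Theorem \ref{thm trace regularity with zero B.C.} run with reversed time and zero terminal data), and define the linear functional
\begin{equation*}
L(\psi):=\langle u_1,\phi(\cdot,0)\rangle_{V^{-1}(\Omega)}-\left(u_0,\tfrac{\partial\phi}{\partial t}(\cdot,0)\right)_{L^2(\Omega)}-\int_0^T\!\!\int_{\Gamma_0}g\cdot(P(\phi)\cdot n)\,d\sigma\,dt+\int_0^T\langle F,\phi\rangle_{V^{-1}(\Omega)}\,dt.
\end{equation*}
Since $S$ is an isomorphism, $\phi$ is single-valued in $\psi$, so $L$ is well defined. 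The first thing to check is that $L$ is bounded on $L^1(0,T;L^2(\Omega))$. As noted after \eqref{eq:isomorphism-S}, $\phi=S^{-1}\psi$ obeys \eqref{eq:energy-estimate-distr-0bc-continuous} and \eqref{trace regularity with zero B.C.} with $u=\phi$, $F=\psi$ and zero data. Hence each term is bounded by a data norm times $C\norm{\psi}_{L^1(0,T;L^2(\Omega))}$: the first two through $\norm{\phi(\cdot,0)}_{H^1(\Omega)}$ and $\norm{\partial_t\phi(\cdot,0)}_{L^2(\Omega)}\le C\norm{\psi}_{L^1(0,T;L^2(\Omega))}$; the boundary term through $\norm{g}_{L^2(0,T;L^2(\Gamma_0))}\norm{P(\phi)\cdot n}_{L^2(0,T;L^2(\Gamma_0))}\le C\sqrt{T}\,\norm{g}_{L^2(0,T;L^2(\Gamma_0))}\norm{\psi}_{L^1(0,T;L^2(\Omega))}$; and the last through $\norm{F}_{L^1(0,T;V^{-1}(\Omega))}\norm{\phi}_{L^\infty(0,T;V^1(\Omega))}$. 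Because $L^2(\Omega)$ is reflexive, $(L^1(0,T;L^2(\Omega)))'=L^\infty(0,T;L^2(\Omega))$, so the bounded functional $L$ is represented by a unique $u\in L^\infty(0,T;L^2(\Omega))$ with $\int_0^T(u,\psi)_{L^2(\Omega)}\,dt=L(\psi)$ for all admissible $\psi$; this is exactly a solution in the sense of Definition \ref{weak solution in the sense of adjoint-isomorphism}.

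The first half of the estimate \eqref{eq:energy-estimate-weak-adjoint} then follows from $\norm{u}_{L^\infty(0,T;L^2(\Omega))}=\norm{L}$ together with the bounds just collected, the factor $c(T)$ originating from the $\sqrt{T}$ in the trace estimate \eqref{trace regularity with zero B.C.} (this is the content of \eqref{eq:c(T)}). Uniqueness is immediate: if $u$ solves \eqref{definition of weak soln} with zero data, then $\int_0^T(u,\psi)_{L^2(\Omega)}\,dt=0$ for every $\psi\in L^1(0,T;L^2(\Omega))$, so $u=0$; by linearity any two solutions with the same data coincide.

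It remains to prove the regularity and estimate for $\partial_t u$, which I expect to be the main obstacle, since the $L^\infty(0,T;L^2)$-bound does not by itself control the velocity. The naive route, testing $\int_0^T\langle\partial_t u,\eta\rangle_{V^{-1}(\Omega)}\,dt=-\int_0^T(u,\partial_t\eta)_{L^2(\Omega)}\,dt=-L(\partial_t\eta)$ against $\eta\in C^\infty_c((0,T);V^1(\Omega))$, produces $\phi=S^{-1}(\partial_t\eta)$, whose backward estimate carries $\norm{\partial_t\eta}_{L^1(0,T;L^2(\Omega))}$ rather than the required $\norm{\eta}_{L^1(0,T;V^1(\Omega))}$. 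The structural fact I would exploit is that the operator $\div P$ is time-independent, so $S^{-1}$ commutes with $\partial_t$; writing $\chi=S^{-1}\eta$ gives $\phi=\partial_t\chi$, which re-expresses the pairing through $\chi$ and its endpoint traces. Equivalently, recasting \eqref{eq with change of variable} as a first-order system exhibits the endpoint terms $\langle u_1,\phi(\cdot,0)\rangle_{V^{-1}(\Omega)}-(u_0,\partial_t\phi(\cdot,0))_{L^2(\Omega)}$ as the symplectic duality pairing between $L^2(\Omega)\times V^{-1}(\Omega)$ and $V^1(\Omega)\times L^2(\Omega)$; hence $(u,\partial_t u)$ is the transpose of the backward solution map, which Theorem \ref{thm trace regularity with zero B.C.} places in $C([0,T];V^1(\Omega))\cap C^1([0,T];L^2(\Omega))$, yielding $\partial_t u\in L^\infty(0,T;V^{-1}(\Omega))$ and the second half of \eqref{eq:energy-estimate-weak-adjoint}. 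I would make this rigorous, with the explicit $T$-dependence feeding into $c(T)$, by invoking the refined abstract regularity results for Hilbert space-valued functions of Appendix \ref{sec:equalities2}; the same results, together with \cite[Chapter 3, Lemma 8.1]{LM-1}, allow $u$ and $\partial_t u$ to be modified on a null set so that \eqref{eq:continuous regularity of very weak solution} holds, $u(\cdot,0)=u_0$ and $\partial_t u(\cdot,0)=u_1$, and the estimate upgrades to the $C([0,T];\cdot)$-norms.
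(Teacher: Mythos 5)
The first half of your proposal—the construction of the functional $L$, its boundedness on $L^1(0,T;L^2(\Omega))$ via \eqref{eq:energy-estimate-distr-0bc-continuous} and \eqref{trace regularity with zero B.C.}, the representation $u\in L^\infty(0,T;L^2(\Omega))=(L^1(0,T;L^2(\Omega)))'$, and the uniqueness argument—is exactly the paper's proof and is fine. The gap is in the part you yourself flag as the main obstacle, the bound $\partial_t u\in L^\infty(0,T;V^{-1}(\Omega))$, which you do not actually close. Your "commutation" fix fails concretely: writing $\phi=S^{-1}(\partial_t\eta)=\partial_t\chi$ with $\chi=S^{-1}\eta$ merely reshuffles the difficulty. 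The term $\langle u_1,\phi(\cdot,0)\rangle_{V^{-1}(\Omega)}$ becomes $\langle u_1,\partial_t\chi(\cdot,0)\rangle_{V^{-1}(\Omega)}$, and Theorem \ref{thm trace regularity with zero B.C.} applied to $\chi$ controls $\partial_t\chi(\cdot,0)$ only in $L^2(\Omega)$, not in $V^1(\Omega)$, so the $V^{-1}$--$V^1$ pairing is not bounded by $\norm{\eta}_{L^1(0,T;V^1(\Omega))}$; likewise $(u_0,\partial_t^2\chi(\cdot,0))_{L^2(\Omega)}=(u_0,\div P(\chi(\cdot,0)))_{L^2(\Omega)}$ is not a bounded pairing for $u_0\in L^2(\Omega)$ only, and the boundary term still involves $P(\partial_t\chi)\cdot n$, which the trace estimate controls only through $\norm{\partial_t\eta}_{L^1(0,T;L^2(\Omega))}$. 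Your fallback—the first-order "symplectic" transposition, defining $(u(s),\partial_t u(s))$ by duality against the backward problem with terminal data at time $s$—is the classical Lions route and is in principle viable, but you leave it entirely as an assertion (one must still verify that the object so defined is the distributional derivative of $u$ and is weak-$*$ measurable in $s$), and the appeal to Appendix \ref{sec:equalities2} does not attach to that route.

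For the record, the paper closes this step differently and more economically: it first identifies $\partial_t^2u$ as an element of $L^\infty(0,T;V^{-2}(\Omega))$, by showing that $\xi\mapsto\int_0^T\int_\Omega u\cdot\div P(\xi)\,dx\,dt+\int_0^T\langle F,\xi\rangle_{V^{-1}(\Omega)}\,dt$ is bounded on $L^1(0,T;V^2(\Omega))$ and testing with separated functions $\xi=\varphi(t)\chi(x)$ to recognize the representing element $z$ as $\partial_t^2u$; it then interpolates between $u\in L^\infty(0,T;L^2(\Omega))$ and $\partial_t^2u\in L^\infty(0,T;V^{-2}(\Omega))$ via Corollary \ref{cor:continuity-time-Linfty-Hm} to land $\partial_t u$ in $L^\infty(0,T;V^{-1}(\Omega))$. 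This is also where the explicit constant \eqref{eq:c(T)} comes from—the factors $\tau^{-j}\max\{1,\tau^{2m}\}$ in \eqref{eq:continuity-time-Linfty-Hm}—not merely from the $\sqrt{T}$ in the trace estimate as your write-up suggests. You would need to supply either this interpolation argument or a genuinely complete version of the backward-duality argument for the lemma to be proved.
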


\begin{proof}
Let $ \psi\in L^1(0,T;L^2(\Omega))$, and consider $\phi=S^{-1}\psi$, where $S$ is defined in \eqref{eq:isomorphism-S}. Specifically, $\phi$ is a weak solution  of \eqref{eq with change of variable}. So, Theorem \ref{thm trace regularity with zero B.C.} and \eqref{eq:sharp-energy-estimate-distr-0bc-continuous} applied to $\phi$ imply that the map  
\begin{equation} \label{eq:mapping from faedo galerkin}
\begin{split}
    \psi\in L^1(0,T;L^2(\Omega))
    \mapsto &\;\langle u_1,\phi(\cdot,0)\rangle_{V^{-1}(\Omega)}
    -\left(u_0,\left.\frac{\partial \phi}{\partial t}\right|_{t=0}\right)_{L^2(\Omega)}
    \\
    &\quad-\int_{0}^{T} \int_{\Gamma_{0}}g \cdot (P(\phi)\cdot n) d\sigma dt
    +\int^T_0\langle F,\phi\rangle_{V^{-1}(\Omega)}\; dt \in \R
\end{split}\end{equation} 
is linear and continuous. By Riesz representation theorem (\cite[Theorem 8.17]{leoniSobolev}), there exists $u\in L^\infty(0,T;L^2(\Omega))$ such that \eqref{definition of weak soln} holds. Uniqueness follows immediately from \eqref{definition of weak soln} by taking $\psi\in C^\infty_c(\Omega\times[0,T))$. 

It remains to show that $u$ satisfies \eqref{eq: regularity of very weak solution}$_2$. Consider the map
\[
\xi\in L^1(0,T;V^2(\Omega))
\mapsto 
\int^T_0\int_{\Omega}u\cdot\div P(\xi)\; dx dt+\int^T_0<F,\xi>_{V^{-1}(\Omega)}\; dt\;\in \R;
\]
this is a bounded linear functional. So, again by the Riesz representation theorem (\cite[Theorem 8.17]{leoniSobolev}), there exists $z\in L^\infty(0,T;V^{-2}(\Omega))$ such that 
\begin{equation}\label{eq:d2dt-u}
\int^T_0\int_{\Omega}u\cdot\div P(\xi)\; dx dt+\int^T_0<F,\xi>_{V^{-1}(\Omega)}\; dt
=\int^T_0 <z,\xi>_{V^{-2}(\Omega)}. 
\end{equation}
Now, take $\xi(x,t)=\varphi(t)\chi(x)$ with $\varphi\in C^\infty_c(0,T;\R)$ and $\chi\in V^2(\Omega;\R^3)$. 
From \eqref{eq:divPu-weak} and \eqref{definition of weak soln},\footnote{\label{foot:regularity-S}The operator $S$ defined in \eqref{eq:isomorphism-S} is still a homeomorphism from 
$L^2(0,T;H^{k+1}(\Omega)\cap V^1(\Omega))\cap H^1(0,T;H^k(\Omega)\cap V^1(\Omega))\cap H^2(0,T;H^{k-1}(\Omega))$ to $L^1(0,T;H^{k}(\Omega))$ for $k\ge 1$,
and from $C^\infty_c(\Omega\times(0,T))$ into $C^\infty_c(\Omega\times(0,T))$
(this is basically due to the fact that the PDE is linear). } it follows that 
\[
\int^T_0 <z,\varphi\chi>_{V^{-2}(\Omega)}=\int^T_0\left( u,\varphi''\chi\right)_{L^2(\Omega)}dt.
\]
Using \cite[Remarks 8.8 \& 8.50]{leoniSobolev}, we therefore conclude that $\displaystyle\frac{\partial^2 u}{\partial t^2}=z\in L^\infty(0,T;V^{-2}(\Omega))$, and we obtain the second condition in \eqref{eq: regularity of very weak solution} by Corollary \ref{cor:continuity-time-Linfty-Hm}. 

Finally, we notice that \eqref{eq:energy-estimate-weak-adjoint} follows from \eqref{eq:mapping from faedo galerkin}, \eqref{eq:d2dt-u},  and \eqref{eq:continuity-time-Linfty-Hm}. In particular, 
\begin{equation}\label{eq:c(T)}
c(T):=CT^{-1}\max\{1,T^2\}(1+T), 
\end{equation} 
with $C$ a positive constant independent of $T$. Furthermore, \eqref{eq:continuous regularity of very weak solution} can be derived using \cite[Chapter 3, Lemma 8.1]{LM-1}.  
\end{proof}

Next, we use the latter lemma to prove the existence of weak solutions to \eqref{eq:Navier} corresponding to nonhomogeneous boundary conditions.  
\begin{lemma} \label{Lemma: existence of weak distribution} 
    In \eqref{eq:Navier}, we now take $F=0$, 
      \begin{equation*} \label{lemma 2 assumptions}
         g,\ \frac{\partial g}{\partial t} \in L^{2}(0,T;L^2(\Gamma_{0})), \quad  u_{0}\in H^{1}(\Omega) \quad \text{and} \quad u_{1}\in L^{2}(\Omega),
      \end{equation*}
      and we suppose that the following {\em compatibility condition} holds:
        \begin{equation} \label{compatibility condition of lemma 2}
           g(\cdot,0)=u_{0}|_{\Gamma_{0}}.
        \end{equation}
        Then there exists a solution $u$ (in the sense of transposition) of \eqref{eq:Navier} satisfying:
        \begin{equation} \label{regularity lemma 2}
           \begin{aligned}
               u, \frac{\partial u}{\partial t} \in C([0,T];L^{2}(\Omega)), \qquad
               \frac{\partial^{2}u}{\partial t^{2}} \in C([0,T];V^{-1}(\Omega)) . 
           \end{aligned} 
        \end{equation}
        Additionally, if 
        \begin{equation} \label{additional assumptions on lemma 2}
            g\in L^{\infty}(0,T;H^{\frac{1}{2}}(\Gamma_{0}));
        \end{equation}
        then the above solution $u$ further satisfies:
        \begin{equation} \label{additional regularity of lemma 2} 
            u \in C([0,T];H^{1}(\Omega))
        \end{equation}
        and 
        \begin{multline}\label{eq:energy-estimate-adjoint-regular}
        \norm{u}_{C([0,T];H^1(\Omega))}+\norm{\frac{\partial u}{\partial t}}_{C([0,T];L^2(\Omega))}
        \\ \le c(T)
        \left(\norm{u_0}_{H^1(\Omega)}+\norm{u_1}_{L^2(\Omega)}+\norm{g}_{L^\infty(0,T;H^{1/2}(\Gamma_0))}
        +\norm{\frac{\partial g}{\partial t}}_{L^2(0,T;L^2(\Gamma_0))}\right)
        \end{multline}
        with  $c(T)$ a positive constant (depending on $T$). 
\end{lemma}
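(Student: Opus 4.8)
The plan is to upgrade the regularity of the transposition solution produced by Lemma~\ref{existence of weak solution in the sense of adjoint-isomorphism}, exploiting the extra time-regularity $\partial g/\partial t\in L^{2}(0,T;L^{2}(\Gamma_{0}))$ and the smoother initial data, by realizing $\partial u/\partial t$ itself as a transposition solution of the problem obtained by differentiating \eqref{eq:Navier} in time. Since $u_{0}\in H^{1}(\Omega)$, the functional $\varphi\mapsto-B(u_{0},\varphi)$, with $B$ as in \eqref{eq:bilinear}, defines an element $\div P(u_{0})\in V^{-1}(\Omega)$ with $\norm{\div P(u_{0})}_{V^{-1}(\Omega)}\le k_{2}\norm{u_{0}}_{H^{1}(\Omega)}$ by \eqref{eq:B-norm-equivalent}; here the homogeneous Neumann condition on $\Gamma_{1}$ is encoded weakly. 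First I would let $v$ be the unique transposition solution, furnished by Lemma~\ref{existence of weak solution in the sense of adjoint-isomorphism}, of \eqref{eq:Navier} with $F=0$, initial displacement $u_{1}\in L^{2}(\Omega)$, initial velocity $\div P(u_{0})\in V^{-1}(\Omega)$, and boundary datum $\partial g/\partial t\in L^{2}(0,T;L^{2}(\Gamma_{0}))$. Thus $v\in C([0,T];L^{2}(\Omega))$, $\partial v/\partial t\in C([0,T];V^{-1}(\Omega))$, and $v$ obeys the estimate \eqref{eq:energy-estimate-weak-adjoint} with this data.

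Next I would set $u(\cdot,t):=u_{0}+\int_{0}^{t}v(\cdot,s)\,ds$, so that $\partial u/\partial t=v$ and $\partial^{2}u/\partial t^{2}=\partial v/\partial t$, which yields \eqref{regularity lemma 2} at once. Formally, on $\Gamma_{0}$ one has $u=u_{0}|_{\Gamma_{0}}+\int_{0}^{t}\partial g/\partial t\,ds=g$ by the compatibility condition \eqref{compatibility condition of lemma 2}, but the rigorous statement of the boundary condition is encoded in the transposition identity, so the substantive step is to verify that $u$ satisfies \eqref{definition of weak soln} for the data $(u_{0},u_{1},g,0)$; uniqueness in Lemma~\ref{existence of weak solution in the sense of adjoint-isomorphism} then identifies it. Given a smooth forcing $\psi$ with $\phi=S^{-1}\psi$ (with $S$ as in \eqref{eq:isomorphism-S}), I would introduce the backward primitives $\Psi(t):=\int_{t}^{T}\psi(\cdot,s)\,ds$ and $\Phi(t):=\int_{t}^{T}\phi(\cdot,s)\,ds$ and observe that $\Phi=S^{-1}\Psi$, since time integration commutes with $\div P$ and preserves both the boundary conditions and the vanishing final data. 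By Fubini, $\int_{0}^{T}(u,\psi)_{L^{2}(\Omega)}\,dt=(u_{0},\Psi(0))_{L^{2}(\Omega)}+\int_{0}^{T}(v,\Psi)_{L^{2}(\Omega)}\,dt$. Inserting the transposition identity for $v$ tested against $\Psi$ (test function $\Phi$), integrating by parts in time in the $\Gamma_{0}$-term, and using Green's formula for the Lam\'e operator in the form $(u_{0},\div P(\Phi(0)))_{L^{2}(\Omega)}=-B(u_{0},\Phi(0))+\int_{\Gamma_{0}}(P(\Phi(0))\cdot n)\cdot g(\cdot,0)\,d\sigma$---where the $\Gamma_{1}$-contribution vanishes because $P(\Phi(0))\cdot n=0$ there and \eqref{compatibility condition of lemma 2} has been used on $\Gamma_{0}$---the right-hand side collapses exactly to $\langle u_{1},\phi(\cdot,0)\rangle_{V^{-1}(\Omega)}-(u_{0},\partial_{t}\phi(\cdot,0))_{L^{2}(\Omega)}-\int_{0}^{T}\int_{\Gamma_{0}}g\cdot(P(\phi)\cdot n)\,d\sigma\,dt$, that is \eqref{definition of weak soln}. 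The general case $\psi\in L^{1}(0,T;L^{2}(\Omega))$ then follows by density.

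For the additional regularity \eqref{additional regularity of lemma 2} I would stop using time integration and instead read \eqref{eq:Navier} as an elliptic problem at frozen time. Since $F=0$, \eqref{regularity lemma 2} gives $\div P(u)=\partial^{2}u/\partial t^{2}\in C([0,T];V^{-1}(\Omega))$, while $u(\cdot,t)|_{\Gamma_{0}}=g(\cdot,t)\in H^{1/2}(\Gamma_{0})$ and $P(u(\cdot,t))\cdot n=0$ on $\Gamma_{1}$. For each fixed $t$ this is the mixed Dirichlet--Neumann boundary value problem for the Lam\'e system with right-hand side in $V^{-1}(\Omega)$ and Dirichlet datum in $H^{1/2}(\Gamma_{0})$; coercivity of $B$ on $V^{1}(\Omega)$ (Poincar\'e--Korn, \eqref{eq:B-norm-equivalent}, which uses $\sigma(\Gamma_{0})>0$) provides a unique variational solution in $H^{1}(\Omega)$ with $\norm{u(\cdot,t)}_{H^{1}(\Omega)}\le C(\norm{\partial^{2}_{t}u(\cdot,t)}_{V^{-1}(\Omega)}+\norm{g(\cdot,t)}_{H^{1/2}(\Gamma_{0})})$. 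Since the transposition solution solves the very same problem in the very weak sense, a duality/uniqueness argument identifies the two, whence $u\in L^{\infty}(0,T;H^{1}(\Omega))$. Applying the same elliptic estimate to $u(\cdot,t_{1})-u(\cdot,t_{2})$ and using $\partial^{2}_{t}u\in C([0,T];V^{-1}(\Omega))$ promotes this to continuity in time, and combining the elliptic bound with \eqref{eq:energy-estimate-weak-adjoint} for $v$ (and $\norm{\div P(u_{0})}_{V^{-1}(\Omega)}\le k_{2}\norm{u_{0}}_{H^{1}(\Omega)}$) yields \eqref{eq:energy-estimate-adjoint-regular}.

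I expect the main obstacle to lie in this last, elliptic step and, above all, in the time-continuity in the $H^{1}$-topology. Gaining a full derivative is delicate because the right-hand side lies only in $V^{-1}(\Omega)$ and the boundary conditions are mixed, so one must verify carefully that the very weak (transposition) solution coincides with the variational $H^{1}$-solution and that the constant $C$ does not degenerate. Moreover, the time-difference estimate controls $\norm{u(\cdot,t_{1})-u(\cdot,t_{2})}_{H^{1}(\Omega)}$ by $\norm{g(\cdot,t_{1})-g(\cdot,t_{2})}_{H^{1/2}(\Gamma_{0})}$, so passing from the mere bound $g\in L^{\infty}(0,T;H^{1/2}(\Gamma_{0}))$ to genuine strong continuity of $t\mapsto u(\cdot,t)$ in $H^{1}(\Omega)$ is the real sticking point; I would secure it by first proving \eqref{eq:energy-estimate-adjoint-regular} for smooth compatible data---where the solution is classical and the continuity is immediate---and then passing to the limit through the estimate, using $\partial g/\partial t\in L^{2}(0,T;L^{2}(\Gamma_{0}))$ together with the interpolation inequality for the spaces $H^{s}(\Gamma_{0})$ to supply the missing time-continuity of the boundary datum.
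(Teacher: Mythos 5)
Your proposal is correct and follows essentially the same route as the paper: apply Lemma \ref{existence of weak solution in the sense of adjoint-isomorphism} to the time-differentiated data $(\tilde u_0,\tilde u_1,\tilde g)=(u_1,\div P(u_0),\partial g/\partial t)$ to obtain $\partial u/\partial t\in C([0,T];L^2(\Omega))$ and $\partial^2 u/\partial t^2\in C([0,T];V^{-1}(\Omega))$, and then recover $u\in H^1(\Omega)$ by solving the mixed elliptic Lam\'e problem at frozen time with right-hand side $\partial^2 u/\partial t^2$ and Dirichlet datum $g\in H^{1/2}(\Gamma_0)$. The only cosmetic differences are that you construct $u$ as the time-primitive of the differentiated solution and verify the transposition identity directly (the paper instead obtains $u$ from the lemma and identifies $\tilde u$ with $\partial u/\partial t$ by testing against $\partial\psi/\partial t$), and that the paper settles the $C([0,T];H^1(\Omega))$ continuity via the Lions--Magenes modification lemma rather than your density argument.
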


\begin{proof}
    From Lemma \ref{existence of weak solution in the sense of adjoint-isomorphism}, corresponding to the data $F=0$, $g$, $u_0$, and $u_1$, there exists a unique solution $u$ (in the sense of transposition) of \eqref{eq:Navier} satisfying 
    \eqref{eq:energy-estimate-weak-adjoint} and \eqref{eq:continuous regularity of very weak solution}.  Assume now that all the data are smooth (then use the fact that the solution operator -- defined as a byproduct of Lemma \ref{existence of weak solution in the sense of adjoint-isomorphism} -- has a continuous inverse in the topology given by \eqref{eq:energy-estimate-weak-adjoint}). Then, again by Lemma \ref{existence of weak solution in the sense of adjoint-isomorphism}, 
    corresponding to the data $\tilde F=0$, $\tilde g=\partial g/\partial t$, $\tilde u_0=u_1$, and $\tilde u_1=\div P(u_0)$, there exists a unique solution $\tilde u$ (in the sense of transposition) of \eqref{eq:Navier} enjoying  
    \eqref{eq:energy-estimate-weak-adjoint} and \eqref{eq:continuous regularity of very weak solution} with $u$ replaced by $\tilde u$. In particular, from \eqref{definition of weak soln}, for all $\psi\in C^\infty_c(\Omega\times(0,T))$ (and correspondingly $\phi=S^{-1}\psi \in C^\infty_c(\Omega\times(0,T))$, see Footnote \ref{foot:regularity-S}) we have that 
    \begin{equation}
    \begin{split}
    \int_{0}^{T} \left(u,\frac{\partial \psi}{\partial t}\right)_{L^{2}(\Omega)} dt
    =- \int_{0}^{T} (\tilde u,\psi)_{L^{2}(\Omega)} dt. 
    \end{split}\end{equation}
    Thus, \[
    \frac{\partial u}{\partial t}=\tilde u\in C([0,T];L^2(\Omega))
    \]
    and in a similar fashion, 
    \[
    \frac{\partial^2 u}{\partial t^2}=\frac{\partial \tilde u}{\partial t}\in C([0,T];V^{-1}(\Omega)).
    \]
    Consider the unique solution $\hat u\in L^\infty(0,T;H^1(\Omega))$ of the elliptic problem\footnote{The existence of such a solution is an immediate consequence of the Lax-Milgram Theorem applied to the bilinear form defined in \eqref{eq:bilinear}, and thanks to \eqref{eq:B-norm-equivalent}. } 
    \[\left\{\begin{aligned}
        &\div P(\hat u)=\frac{\partial^2 u}{\partial t^2}-\div P(\hat g)\quad&&\text{in }\Omega,\text{ and a.e. }t\in (0,T), 
        \\
        &\hat u=0\quad &&\text{on }\Gamma_0,\text{ and a.e. }t\in (0,T), 
        \\
        &P(\hat u)\cdot n=0 &&\text{on }\Gamma_1,\text{ and a.e. }t\in (0,T), 
    \end{aligned}\right.\]
    where $\hat g\in L^\infty(0,T;H^1(\Omega))$ is a lift of $g\in L^\infty(0,T;H^{1/2}(\Gamma_0))$ on $\Omega$, and such that $\hat g=0$ on $\Gamma_1$ for a.e. $t\in (0,T)$. By \eqref{definition of weak soln}, it follows that 
    \[
    u=\hat u+\hat g\in L^\infty(0,T;H^1(\Omega)),
    \]
    and 
    \begin{equation}\label{eq:elliptic-extension-H1}
    \norm{u}_{L^\infty(0,T;H^1(\Omega))}\le c_1\left(\norm{\frac{\partial \tilde u}{\partial t}}_{L^\infty(0,T;V^{-1}(\Omega))}
    +\norm{g}_{L^\infty(0,T;H^{1/2}(\Gamma_0))}\right). 
    \end{equation}
    
    Finally, estimate \eqref{eq:energy-estimate-adjoint-regular} follows from \eqref{eq:energy-estimate-weak-adjoint} applied to  $u$ and $\tilde u$, and from the estimate \eqref{eq:elliptic-extension-H1} (after a possible modification of the solution on a set of measure zero, by \cite[Chapter 3, Lemma 8.1]{LM-1}). 
    \end{proof}
    
    In Lemma \ref{Lemma: existence of weak distribution} we have proved the existence of solutions corresponding to nonhomegeneous boundary conditions satisfying $g$ and $\displaystyle\frac{\partial g}{\partial t} \in L^{2}(0,T;L^2(\Gamma_{0}))$, and $g\in L^{\infty}(0,T;H^\frac{1}{2}(\Gamma_{0}))$. Next, we extend this result to the case of a nonzero force $F$ and a slightly more regular $g$.
\begin{theorem} \label{thm trace regularity with nonzero B.C.}
   Let $F \in L^{1}(0,T;L^{2}(\Omega))$, $u_{0} \in H^{1}(\Omega)$, $u_{1} \in L^{2}(\Omega)$, and 
    \[
    g\in H^1(0,T;L^2(\Gamma_0))\cap L^2(0,T;H^1(\Gamma_0))  
    \]
    satisfying the compatibility condition $g(\cdot,0)=u_{0}$. Then, there exists a unique solution $u$ to \eqref{eq:Navier} with 
   \begin{equation} \label{eq:strong solution of u}
       u\in C([0,T];H^{1}(\Omega)), \qquad \frac{\partial u}{\partial t}\in C([0,T];L^{2}(\Omega)), \qquad \frac{\partial^{2}u}{\partial t^{2}} \in C([0,T];H^{-1}(\Omega)),
   \end{equation}
   and enjoying the following estimate: 
   \begin{equation} \label{trace regularity for nonhomogeneous case}
   \begin{aligned}
        \norm{u}_{C([0,T];H^1(\Omega))}
        &+\norm{\frac{\partial u}{\partial t}}_{C([0,T];L^2(\Omega))}
        +\norm{P(u)\cdot n}_{L^2(0,T;L^2(\Gamma_0))}
        \\
        &\le c(T)
        \left(\norm{F}_{L^{1}(0,T;L^{2}(\Omega))}+\norm{u_{0}}_{H^{1}(\Omega)}
        +\norm{u_{1}}_{L^{2}(\Omega))}+\norm{g}_{H^{1}(\Gamma_{0} \times (0,T))}\right),
   \end{aligned}
   \end{equation}
   with $c(T)$ is a positive (constant depending on $T$). 
\end{theorem}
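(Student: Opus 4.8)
The plan is to build $u$ by linear superposition and to split the trace bound into a piece already controlled by Theorem~\ref{thm trace regularity with zero B.C.} and a piece that requires a fresh multiplier computation adapted to the inhomogeneous Dirichlet datum. I would write $u=v+w$, where $v$ solves \eqref{eq:Navier} with $F=0$, boundary datum $g$, and initial data $u_0,u_1$, while $w$ solves \eqref{eq:Navier} with forcing $F$, homogeneous datum $g=0$, and zero initial data. The summand $w$ is furnished by Theorem~\ref{thm trace regularity with zero B.C.} (its hypotheses $u_0=u_1=0\in V^1(\Omega)\times L^2(\Omega)$ and $F\in L^1(0,T;L^2(\Omega))$ hold), while $v$ is furnished by Lemma~\ref{Lemma: existence of weak distribution}. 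To invoke the latter I must verify its hypotheses: the compatibility $g(\cdot,0)=u_0|_{\Gamma_0}$ is assumed, $g,\partial_t g\in L^2(0,T;L^2(\Gamma_0))$ since $g\in H^1(0,T;L^2(\Gamma_0))$, and the extra regularity \eqref{additional assumptions on lemma 2}, namely $g\in L^\infty(0,T;H^{1/2}(\Gamma_0))$, follows from the embedding $H^1(0,T;L^2(\Gamma_0))\cap L^2(0,T;H^1(\Gamma_0))\hookrightarrow C([0,T];H^{1/2}(\Gamma_0))$ (time interpolation, using $[L^2(\Gamma_0),H^1(\Gamma_0)]_{1/2}=H^{1/2}(\Gamma_0)$). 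By linearity $u=v+w$ solves \eqref{eq:Navier}, uniqueness is immediate from linearity, and the regularity \eqref{eq:strong solution of u} is inherited from the two summands, the second time derivative being read off from $\partial_t^2 u=\div P(u)+F$ once $u\in C([0,T];H^1(\Omega))$ is known (so that $\div P(u)\in C([0,T];H^{-1}(\Omega))$).

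The contribution of $w$ to the trace estimate is immediate from Theorem~\ref{thm trace regularity with zero B.C.}: $\norm{P(w)\cdot n}_{L^2(0,T;L^2(\Gamma_0))}\le C\sqrt T\,\norm{F}_{L^1(0,T;L^2(\Omega))}$. The heart of the matter is the trace estimate for $v$, which I would obtain by repeating the multiplier argument of Theorem~\ref{thm trace regularity with zero B.C.}, taking the $L^2$-inner product of $\partial_t^2 v=\mu\Delta v+(\lambda+\mu)\nabla(\div v)$ with $\nabla v\cdot h$, where $h\in C^1(\overline\Omega)$ extends the normal with $h=n$ on $\Gamma_0$ and $h=0$ on $\Gamma_1$. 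Integration by parts as in \eqref{eq:lhs}--\eqref{rhs_2nd term}, through \eqref{eq:nabla-u-dot-n-norm} and \eqref{eq:div^2}, again produces on one side the surface integrals carrying $|\nabla v\cdot n|^2$ and $(n\cdot\nabla v\cdot n)^2$ on $\Gamma_0$. The difference from the homogeneous case is that the simplifications valid when $v=0$ on $\Gamma_0$ (namely $|\nabla v|^2=|\nabla v\cdot n|^2$ and $\div v=n\cdot\nabla v\cdot n$ on $\Gamma_0$) no longer hold: the tangential part of $\nabla v$ on $\Gamma_0$ now equals the tangential gradient of $g$, and the time integration by parts leaves an extra surface term $\tfrac12\int_0^T\int_{\Gamma_0}|\partial_t g|^2\,d\sigma\,dt$ because $\partial_t v=\partial_t g$ rather than $0$ on $\Gamma_0$.

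Every one of these new surface contributions is a tangential or time derivative of $g$, hence bounded by $\norm{g}_{H^1(\Gamma_0\times(0,T))}^2$, while the remaining interior volume integrals are controlled by H\"older's inequality and the energy bound \eqref{eq:energy-estimate-adjoint-regular}, exactly as in Theorem~\ref{thm trace regularity with zero B.C.}. This yields control of $\norm{\nabla v\cdot n}_{L^2(0,T;L^2(\Gamma_0))}$ and $\norm{n\cdot\nabla v\cdot n}_{L^2(0,T;L^2(\Gamma_0))}$; since on $\Gamma_0$ the stress vector $P(v)\cdot n=2\mu\,e(v)\cdot n+\lambda(\div v)\,n$ is, by the appendix identities \eqref{eq:sym-nabla-u-dot-n-norm}--\eqref{eq:div^2}, a linear combination of $\nabla v\cdot n$, the tangential derivatives of $g$, and $\div v=n\cdot\nabla v\cdot n+\div_\tau g$, I obtain $\norm{P(v)\cdot n}_{L^2(0,T;L^2(\Gamma_0))}\le c(T)\big(\norm{u_0}_{H^1(\Omega)}+\norm{u_1}_{L^2(\Omega)}+\norm{g}_{H^1(\Gamma_0\times(0,T))}\big)$. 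As in Theorem~\ref{thm trace regularity with zero B.C.}, these computations are first carried out for smooth data, where all boundary traces are classical, and then extended to general data by density, the continuity of the solution operator \eqref{eq:energy-estimate-adjoint-regular}, and the weak lower semicontinuity of the $L^2(0,T;L^2(\Gamma_0))$-norm. Summing the trace bounds for $v$ and $w$ together with the energy estimates \eqref{eq:energy-estimate-adjoint-regular} and \eqref{eq:sharp-energy-estimate-distr-0bc-continuous} gives \eqref{trace regularity for nonhomogeneous case}.

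I expect the main obstacle to be the third step: the careful bookkeeping of the boundary terms generated by the nonzero datum $g$ in the multiplier identity, and in particular showing that, after isolating the normal-derivative quantities that reconstruct $P(v)\cdot n$, every leftover surface term is genuinely a tangential or time derivative of $g$ and hence absorbable into $\norm{g}_{H^1(\Gamma_0\times(0,T))}$. Ensuring that the limiting procedure from smooth to general data respects both the energy estimate and the hidden trace bound is the accompanying technical point.
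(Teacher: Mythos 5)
Your proposal is correct and follows essentially the same route as the paper: superposition to separate the forcing $F$ (handled by Theorem \ref{thm trace regularity with zero B.C.}) from the inhomogeneous boundary datum (handled by Lemma \ref{Lemma: existence of weak distribution} after checking $g\in L^\infty(0,T;H^{1/2}(\Gamma_0))$ by time interpolation), followed by the same multiplier $\nabla u\cdot h$ and the same accounting of the extra boundary terms, which the paper organizes via the harmonic lift $\hat g$ and the decomposition $\nabla u=\nabla u\cdot n\otimes n+\nabla\hat g-\nabla\hat g\cdot n\otimes n$ on $\Gamma_0$ — exactly your observation that every leftover surface term is a tangential or time derivative of $g$. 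The only cosmetic difference is that the paper keeps the quadratic cross term $\div u\,(n\cdot\nabla u\cdot n)-\tfrac12(\div u)^2$ and bounds it below by $-\tfrac12(\div\hat g-n\cdot\nabla\hat g\cdot n)^2$ rather than expanding $\div u$ as you do, which amounts to the same estimate.
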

\begin{proof}
    By Theorem \ref{thm trace regularity with zero B.C.} and the superposition principle, it suffices to prove the result for $F=0$. According to the assumptions $g \in L^{2}(0,T;H^{1}(\Gamma_{0})) \cap H^{1}(0,T;L^{2}(\Gamma_{0}))$, so that $g$ satisfies \eqref{additional assumptions on lemma 2} (by Theorem \ref{th:continuity-time-L2-Hm} with $m=1$,  $V=H^{1}(\Gamma_{0})$, and $H=L^{2}(\Gamma_{0})$). Therefore, \eqref{eq:strong solution of u} follows from \eqref{additional regularity of lemma 2} and \eqref{regularity lemma 2}. In addition, by \eqref{eq:energy-estimate-adjoint-regular}, we have  
    \begin{equation} \label{eq: inequality for nonzero b.c. in thm 2 }
    \norm{\nabla u}_{C([0,T];L^{2}(\Omega))}+\norm{\frac{\partial u}{\partial t}}_{C([0,T];L^{2}(\Omega))} \leq c(T)E
    \end{equation}
    where we have set
    \begin{equation} \label{eq:norm with b.c. and i.c.}
    \begin{aligned}
    E:=\norm{F}_{L^{1}(0,T;L^{2}(\Omega))}+\norm{u_{0}}_{H^{1}(\Omega)}+\norm{u_{1}}_{L^{2}(\Omega))}+\norm{g}_{H^{1}(0,T;L^2(\Gamma_{0}))}+\norm{g}_{L^{2}(0,T;H^1(\Gamma_{0}))}. 
    \end{aligned}
    \end{equation}
    
    To prove \eqref{trace regularity for nonhomogeneous case} we only need to show
     \begin{equation} \label{eq:trace regularity with nonzero b.c.}
    \norm{P(u) \cdot n}_{L^{2}(0,T;L^2(\Gamma_{0}))} \leq c(T)E. 
    \end{equation}
    The latter is true if, in particular, the following estimate holds:
    \begin{equation}\label{eq:estimate-gradu-dot-n-gradu-divu}
    \sqrt3 \mu\norm{\nabla u\cdot n}_{L^2(0,T;L^2(\Gamma_0))}+\mu\norm{\nabla u}_{L^2(0,T;L^2(\Gamma_0))}+\lambda\norm{\div u}_{L^2(0,T;L^2(\Gamma_0))}\leq c(T)E. 
    \end{equation}
    We further notice that the subsequent equalities hold on $\Gamma_0$, if $u$ is sufficiently smooth:
    \[
    \nabla u=\nabla u\cdot n\otimes n+\nabla \hat g-\nabla \hat g\cdot n \otimes n
    \]
    and 
    \[\begin{split}
    &|\nabla u|^2=|\nabla u\cdot n|^2+|\nabla \hat g-\nabla \hat g\cdot n \otimes n|^2,
    \end{split}\]
    where $\hat g\in L^2(0,T;H^{3/2}(\Omega))\cap H^1(0,T;H^{1/2}(\Omega))$ is the solution (with fixed time) of the following elliptic problem 
    \begin{equation}\label{eq:divP0-extension}
    \left\{\begin{aligned}
    &\Delta w=0\qquad&&\text{in }\Omega,
    \\
    &w=g\qquad&&\text{on }\Gamma_0,
    \\
    &w=0\qquad&&\text{on }\Gamma_1.
    \end{aligned}\right. 
    \end{equation}
    The existence of such an extension $\hat g$ of the boundary datum $g$ is classical, we refer the interested reader to \cite[Chapter 2, Section 7, Theorems 7.3.]{LM-1}. In particular, from \cite[Chapter 2, Section 7, equation (7.28)]{LM-1}, we have that the estimate 
    \begin{equation}\label{eq:estimate-extension-g}
    \norm{\hat g}_{L^2(0,T;H^{3/2}(\Omega))}+\norm{\frac{\partial \hat g}{\partial t}}_{L^2(0,T;H^{1/2}(\Omega))}\le c_1
    \left(\norm{g}_{L^2(0,T;H^1(\Gamma_0))}+\norm{\frac{\partial g}{\partial t}}_{L^2(0,T;L^2(\Gamma_0))}\right)
    \end{equation}
    with $c_1$ a positive constant independent of $T$. Thus, 
    \begin{equation}\label{eq:estimate-nablau-with-nablaudotn-g}
    \norm{\nabla u}_{L^2(0,T;L^2(\Gamma_0))}^2\le \norm{\nabla u\cdot n}_{L^2(0,T;L^2(\Gamma_0))}^2
    +4c_1\left(\norm{g}_{L^2(0,T;H^1(\Gamma_0))}^2+\norm{\frac{\partial g}{\partial t}}_{L^2(0,T;L^2(\Gamma_0))}^2\right),\end{equation}
    and since $\norm{\div u}_{L^2(0,T;L^2(\Gamma_0))}^2\le 2\norm{\nabla u}_{L^2(0,T;L^2(\Gamma_0))}^2$, \eqref{eq:estimate-gradu-dot-n-gradu-divu} holds if 
    \begin{equation}\label{eq:estimate-gradu-dot-n}
    \norm{\nabla u\cdot n}_{L^2(0,T;L^2(\Gamma_0))}\leq c(T)E. 
    \end{equation}
    Let us then focus to prove the latter estimate. Assuming that all the data are smooth\footnote{We note that the {\em solution operator} that maps the given data to the (unique) solution of \eqref{eq:Navier} (in the sense of transposition) is continuous in the topologies given by \eqref{eq:energy-estimate-weak-adjoint} and \eqref{eq: inequality for nonzero b.c. in thm 2 }-\eqref{eq:norm with b.c. and i.c.}, respectively} and following the same strategy as in the proof of Theorem \ref{thm trace regularity with zero B.C.}, we multiply \eqref{eq:Navier} by $\nabla u \cdot h$. We then handle the extra term appearing in \eqref{eq:lhs}, due to the non-zero boundary data $g$, as follows 
    \begin{equation} \label{eq:lhs of thm 2}
         \begin{aligned}
            \int_{\Omega}\int_{0}^{T} \frac{\partial^{2} u }{\partial t^{2}}  (\nabla u \cdot h)\; dt dx 
            &=\int_{\Omega}\left(\frac{\partial u}{\partial t}\bigg|_{t=T}\right) \cdot (\nabla u (T) \cdot h) \; dx
            -\int_{\Omega} u_{1} \cdot (\nabla u_{0} \cdot h) \; dx
            \\
            &\quad +\int_{0}^{T}\int_{\Omega}\frac{1}{2} \left(\frac{\partial u}{\partial t}\right)^{2} \cdot \mathrm{div} h\; dx dt 
            -\int_{0}^{T}\int_{\Gamma_{0}}\frac{1}{2} \left(\frac{\partial g}{\partial t}\right) ^{2}\; d\sigma dt \\
            & \quad \leq  c(T)E. 
         \end{aligned}
     \end{equation}
     For the remaining terms, we use only the first equalities in \eqref{eq:rhs_1st_part} and \eqref{rhs_2nd term}, and estimate the volume integrals using \eqref{eq: inequality for nonzero b.c. in thm 2 }. Combining these estimates with \eqref{eq:lhs of thm 2}, we obtain the following   
     \[\begin{split}
        \mu &\left[\int^T_0\int_{\Gamma_{0}}|\nabla u\cdot n|^2\;d\sigma dt
             -\frac 12\int^T_0\int_{\Gamma_{0}}|\nabla u|^2\;d\sigma dt\right] 
             \\
        &\quad +(\mu+\lambda)\left[\int^T_0\int_{\Gamma_{0}}\div {u} (n\cdot \nabla u \cdot n)\; d\sigma dt
         -\frac{1}{2}\int^T_0\int_{\Gamma_{0}}(\div{u})^2\;d\sigma dt\right]\le c(T)E.
         \end{split}\]
         On the first two terms on the left-hand side of the latter displayed equation, we use \eqref{eq:estimate-nablau-with-nablaudotn-g}. For the remaining two integrals, we notice that 
         \[
         \div u(n\cdot \nabla u\cdot n)-\frac 12(\div u)^2=\frac 12 (\div u)^2 
         -\div u(\div \hat g-n\cdot \nabla \hat g\cdot n)\ge -\frac 12 (\div \hat g-n\cdot \nabla \hat g\cdot n)^2.
         \]
         Therefore, we obtain 
         \begin{multline*}
         \frac \mu2\norm{\nabla u\cdot n}_{L^2(0,T;L^2(\Gamma_0))}^2\le c(T)E
         +2c_1\mu\left(\norm{g}_{L^2(0,T;H^1(\Gamma_0))}^2+\norm{\frac{\partial g}{\partial t}}_{L^2(0,T;L^2(\Gamma_0))}^2\right)
         \\
         +\frac{\mu+\lambda}{2} \norm{\div \hat g-n\cdot \nabla \hat g\cdot n}_{L^2(0,T;L^2(\Gamma_0))}^2,
         \end{multline*}
         and \eqref{eq:estimate-gradu-dot-n} immediately follows from the last estimate, \eqref{eq:estimate-extension-g}, and \eqref{eq:norm with b.c. and i.c.}. 
\end{proof}

Using the latter theorem, we are ready to demonstrate the following result on the trace regularity of the stress vector $P(u)\cdot n$ along solutions in the sense of transposition of \eqref{eq:Navier}, under low regularity assumptions on the data. To simplify the notation, we denote by $H^{1*}(\Gamma_0\times(0,T))$ the completion of the space $L^2(0,T;L^2(\Gamma_0))$ with respect to the norm
\[
\norm{v}_{H^{1*}(\Gamma_0\times(0,T))}:=\sup_{w\in {_0}H^1(\Gamma_0\times(0,T)),\,\norm{w}_{H^1(\Gamma_0\times(0,T))}\le 1}\left|\int^T_0\int_{\Gamma_0}w\cdot v\;d\sigma dt\right|,
\]
where ${_0}H^1(\Gamma_0\times(0,T)):=\{w\in H^1(0,T;L^2(\Gamma_0))\cap L^2(0,T;H^1(\Gamma_0)):\; w(\cdot,0)=w(\cdot,T)=0\}$. \footnote{This space is endowed with with the norm $\norm{w}_{H^1(\Gamma_0\times(0,T))}:=\left(\norm{\frac{\partial w}{\partial t}}_{L^2(0,T;L^2(\Gamma_0))}^2+\norm{w}_{L^2(0,T;H^1(\Gamma_0))}^2\right)^{1/2}$. }

\begin{theorem} \label{trace regularity from adjoint weak solution}
Under the same assumptions of Lemma \ref{existence of weak solution in the sense of adjoint-isomorphism}, we have  
\begin{equation} \label{eq:trace regularity from adjoint weak solution}
    \begin{aligned}
      \norm{P(u)\cdot n}_{H^{1*}(\Gamma_0\times(0,T))}&\le c(T)\norm{g}_{L^2(0,T;L^2(\Gamma_0))},
    \end{aligned}  
\end{equation} 
where $c(T)$ is a positive constant (depending on $T$).
\end{theorem}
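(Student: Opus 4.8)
The plan is to argue by duality, exploiting that $\norm{\cdot}_{H^{1*}(\Gamma_0\times(0,T))}$ is by definition the dual norm on $L^2(0,T;L^2(\Gamma_0))$ with respect to the pairing against test fields $w\in {_0}H^1(\Gamma_0\times(0,T))$. Thus it suffices to produce, for every such $w$ with $\norm{w}_{H^1(\Gamma_0\times(0,T))}\le 1$, a bound of the form $\left|\int_0^T\int_{\Gamma_0}(P(u)\cdot n)\cdot w\,d\sigma\,dt\right|\le c(T)\norm{g}_{L^2(0,T;L^2(\Gamma_0))}$, and then to take the supremum over $w$. The idea is to realize each admissible $w$ as the Dirichlet trace on $\Gamma_0$ of a conveniently chosen \emph{auxiliary} solution $\phi$ of the homogeneous Navier system run \emph{backward} in time, and then to transfer the boundary integral onto $\phi$ by a Green-type identity.

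Concretely, I would first let $\phi$ solve the backward problem $\partial_t^2\phi=\div P(\phi)$ in $\Omega\times(0,T)$, with $\phi(\cdot,T)=\partial_t\phi(\cdot,T)=0$, $P(\phi)\cdot n=0$ on $\Gamma_1$, and $\phi=w$ on $\Gamma_0$. Reversing time via $\tilde\phi(\cdot,t):=\phi(\cdot,T-t)$ turns this into a forward problem with zero initial data, no body force, stress-free condition on $\Gamma_1$, and Dirichlet datum $\tilde w(\cdot,t):=w(\cdot,T-t)$ on $\Gamma_0$. Since $w\in {_0}H^1(\Gamma_0\times(0,T))$, we have $\tilde w\in H^1(0,T;L^2(\Gamma_0))\cap L^2(0,T;H^1(\Gamma_0))$, and the vanishing of $w$ at $t=0$ and $t=T$ furnishes exactly the compatibility condition $\tilde w(\cdot,0)=0$ required by Theorem~\ref{thm trace regularity with nonzero B.C.} (with zero initial data). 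That theorem then applies to $\tilde\phi$ and, after undoing the time reversal (which leaves the $H^1(\Gamma_0\times(0,T))$-norm invariant), yields the key estimate $\norm{P(\phi)\cdot n}_{L^2(0,T;L^2(\Gamma_0))}\le c(T)\norm{w}_{H^1(\Gamma_0\times(0,T))}$, together with $\phi\in C([0,T];H^1(\Omega))$ and $\partial_t\phi\in C([0,T];L^2(\Omega))$.

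Next I would establish the duality identity linking the two traces. Assuming first that all the data of \eqref{eq:Navier} are smooth, I multiply the equation for $u$ by $\phi$, integrate over $\Omega\times(0,T)$, integrate by parts twice in time (using $u(\cdot,0)=u_0$, $\partial_t u(\cdot,0)=u_1$ and the vanishing of $\phi,\partial_t\phi$ at $t=T$), and apply the second Green identity for the operator $\div P$, which is legitimate because the bilinear form $B$ in \eqref{eq:bilinear} is symmetric. Using $P(u)\cdot n=P(\phi)\cdot n=0$ on $\Gamma_1$, together with $u=g$ and $\phi=w$ on $\Gamma_0$, the interior and $\Gamma_1$ contributions cancel and one is left with an identity of the form $\int_0^T\int_{\Gamma_0}(P(u)\cdot n)\cdot w\,d\sigma\,dt=\int_0^T\int_{\Gamma_0}g\cdot(P(\phi)\cdot n)\,d\sigma\,dt+R$, where $R$ collects the contributions of $u_0$, $u_1$ and $F$, paired against $\partial_t\phi(\cdot,0)$, $\phi(\cdot,0)$ and $\phi$ respectively. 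This identity is then extended from smooth data to the transposition solution of Lemma~\ref{existence of weak solution in the sense of adjoint-isomorphism} by density, using the continuity of the solution operator in the topology of \eqref{eq:energy-estimate-weak-adjoint}; in particular, this continuous extension is what \emph{defines} the trace $P(u)\cdot n$ as an element of $H^{1*}(\Gamma_0\times(0,T))$.

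It then remains to estimate the right-hand side. By linearity it suffices to track the genuinely boundary-driven contribution $\int_0^T\int_{\Gamma_0}g\cdot(P(\phi)\cdot n)$, the remaining terms in $R$ being governed separately by the data $F,u_0,u_1$ through the bounds of Lemma~\ref{existence of weak solution in the sense of adjoint-isomorphism}; note here that $w(\cdot,0)=0$ forces $\phi(\cdot,0)\in V^1(\Omega)$, so the pairing $\langle u_1,\phi(\cdot,0)\rangle_{V^{-1}(\Omega)}$ is well defined. For the boundary term, the Cauchy--Schwarz inequality and the auxiliary estimate give $\left|\int_0^T\int_{\Gamma_0}g\cdot(P(\phi)\cdot n)\,d\sigma\,dt\right|\le \norm{g}_{L^2(0,T;L^2(\Gamma_0))}\norm{P(\phi)\cdot n}_{L^2(0,T;L^2(\Gamma_0))}\le c(T)\norm{g}_{L^2(0,T;L^2(\Gamma_0))}\norm{w}_{H^1(\Gamma_0\times(0,T))}$, and taking the supremum over $\norm{w}_{H^1(\Gamma_0\times(0,T))}\le 1$ yields \eqref{eq:trace regularity from adjoint weak solution}. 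I expect the main obstacle to be precisely the rigorous passage from smooth data to the transposition solution, that is, assigning a meaning to $P(u)\cdot n$ and to the identity above when $u$ only enjoys the regularity \eqref{eq: regularity of very weak solution}, together with the careful bookkeeping that shows the auxiliary problem's Dirichlet datum $w$ meets the compatibility hypothesis of Theorem~\ref{thm trace regularity with nonzero B.C.}, so that the crucial bound $\norm{P(\phi)\cdot n}_{L^2(0,T;L^2(\Gamma_0))}\le c(T)\norm{w}_{H^1(\Gamma_0\times(0,T))}$ is indeed available.
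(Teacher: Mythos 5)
Your overall strategy is the same as the paper's: test $P(u)\cdot n$ against $w\in{_0}H^1(\Gamma_0\times(0,T))$, build an auxiliary solution of the homogeneous Navier system whose Dirichlet datum on $\Gamma_0$ is $w$, invoke Theorem \ref{thm trace regularity with nonzero B.C.} to get $\norm{P(\phi)\cdot n}_{L^2(0,T;L^2(\Gamma_0))}\le c(T)\norm{w}_{H^1(\Gamma_0\times(0,T))}$, transfer the pairing via a Green-type identity, and finish with Cauchy--Schwarz. (The paper runs the auxiliary problem forward in time with zero initial data rather than backward with zero final data; since $w$ vanishes at both $t=0$ and $t=T$, either choice satisfies the compatibility condition, so this difference is immaterial.)

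The genuine gap is your treatment of the remainder $R$. Your identity reads $\int_0^T\int_{\Gamma_0}(P(u)\cdot n)\cdot w = \int_0^T\int_{\Gamma_0}g\cdot(P(\phi)\cdot n)+R$ with $R$ collecting the pairings of $u_0$, $u_1$, $F$ against $\partial_t\phi(\cdot,0)$, $\phi(\cdot,0)$, $\phi$, and you propose to bound $R$ ``separately through the data.'' But the conclusion \eqref{eq:trace regularity from adjoint weak solution} has \emph{only} $c(T)\norm{g}_{L^2(0,T;L^2(\Gamma_0))}$ on the right-hand side; estimating $R$ by $\norm{u_0}_{L^2(\Omega)}+\norm{u_1}_{V^{-1}(\Omega)}+\norm{F}_{L^1(0,T;V^{-1}(\Omega))}$ yields a strictly weaker inequality, not the stated one. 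These terms do not vanish for a generic admissible $w$ (e.g.\ $\phi(\cdot,0)$ and $\partial_t\phi(\cdot,0)$ are not zero just because $w(\cdot,0)=0$), so your argument as written does not close. The paper eliminates $R$ structurally: it writes the transposition identity \eqref{definition of weak soln} twice --- once for $u$ tested against the auxiliary solution $z$ (with $\psi=0$), once for $z$ tested against $u$ (with $\psi=F$) --- and adds the two, whereupon the $\langle u_1,\cdot\rangle$, $(u_0,\cdot)$ and $F$-terms cancel exactly (the initial data of $z$ being zero), leaving the clean identity \eqref{eq: definition of weak soln for lower regularity} containing only the two boundary integrals. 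To repair your proof you would need either to reproduce this cancellation, or to prove separately that the portion of $P(u)\cdot n$ generated by $(F,u_0,u_1)$ pairs to something controlled by $\norm{g}$ alone against every $w\in{_0}H^1(\Gamma_0\times(0,T))$; neither step is supplied, and the second is exactly the nontrivial content your ``governed separately'' remark glosses over.
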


\begin{proof}
Let $k\in {_0}H^1(\Gamma_0\times(0,T))$ be non-zero, and consider the solution $z$ (in the sense of transposition) of \eqref{eq:Navier} with $u_0=u_1=F=0$ and $g=k$. By Lemma \ref{Lemma: existence of weak distribution} and Theorem \ref{thm trace regularity with nonzero B.C.}, we have that 
\begin{equation}\label{eq:regularity-weak-k}
\norm{z}_{C([0,T];H^1(\Omega))}+\norm{\frac{\partial z}{\partial t}}_{C([0,T];L^2(\Omega))}
+\norm{P(z)\cdot n}_{L^2(0,T;L^2(\Gamma_0))}
\le c(T)\norm{k}_{H^1(\Gamma_0\times(0,T))}. 
\end{equation}

Assume that all data are smooth, and use \eqref{definition of weak soln} for $u$ (from Lemma \ref{existence of weak solution in the sense of adjoint-isomorphism}) with $\phi=z$ (corresponding to $\psi=0$) and for $z$ with $\phi=u$ (corresponding to $\psi=F$). Adding side by side the resulting two equations, we find 
   \begin{equation} \label{eq: definition of weak soln for lower regularity}
    \int_{0}^{T} \int_{\Gamma_{0}}k \cdot (P(u)\cdot n) \;d\sigma dt=
    -\int_{0}^{T} \int_{\Gamma_{0}}g \cdot (P(z)\cdot n) \;d\sigma dt.
  \end{equation}
Using \eqref{eq:regularity-weak-k} and H\"older's inequality, we obtain the estimate \eqref{eq:trace regularity from adjoint weak solution}. 
\end{proof}

Next we prove the existence of {\em strong solutions} of \eqref{eq:Navier} assuming more regular data. 

\begin{lemma} \label{existence of strong solution}
Assume that 
\begin{equation} \label{assumptions on strong solution}
    \begin{aligned}
        & F\in L^{1}(0,T;H^{1}(\Omega)),\quad \frac{\partial F}{\partial t}\in L^1(0,T;L^2(\Omega)), 
        \\
        &g \in L^{2}(0,T;H^{2}(\Gamma_{0})) \cap H^{2}(0,T;L^{2}(\Gamma_{0})), 
        \\ 
        & u_{0}\in H^{2} (\Omega), \quad u_{1} \in H^{1} (\Omega),  
        \end{aligned}
\end{equation}
and the following {\em compatibility conditions} are satisfied 
\begin{equation} \label{compatibility conditions on strong solution)}
    \begin{aligned}
        &g(\cdot,0) =u_{0}\quad  &&\text{on }\Gamma_{0}, \quad
        \frac{\partial g}{\partial t}(\cdot,0) =u_{1}\quad &&\text{on }\Gamma_{0}, 
        \\
        &P(u_{0}) \cdot n =0 \quad &&\text{on } \Gamma_{1}.
    \end{aligned}
\end{equation}
Then, there exists a unique solution to \eqref{eq:Navier} enjoying the following regularity properties: 
\begin{equation} \label{eq:regularity of solution}
    \begin{aligned}
     u \in C([0,T];H^{2}(\Omega)) ;\quad
      \frac{\partial u}{\partial t} \in C([0,T];H^{1}(\Omega)) ;\quad
      \frac{\partial^{2} u}{\partial t^2} &\in C([0,T];L^{2}(\Omega)) .
     \end{aligned}  
\end{equation}  
In particular, $u$ satisfies \eqref{eq:Navier} almost everywhere in space and time, and the following estimate holds 
\begin{equation} \label{eq:estimate-strong-sln}
   \begin{aligned}
        \norm{u}_{C([0,T];H^2(\Omega))}
        &+\norm{\frac{\partial u}{\partial t}}_{C([0,T];H^1(\Omega))}
        +\norm{\frac{\partial^2 u}{\partial t^2}}_{C([0,T];L^2(\Omega))}
        \\
        &\le c(T)
        \left(\norm{F}_{L^{1}(0,T;H^{1}(\Omega))}+\norm{\frac{\partial F}{\partial t}}_{L^1(0,T;L^2(\Omega))}
        +\norm{u_{0}}_{H^{2}(\Omega)}\right.
        \\
        &\qquad \qquad \qquad\qquad \qquad \qquad \quad \left.+\norm{u_{1}}_{H^1(\Omega))}
        +\norm{g}_{L^2(0,T;H^2(\Omega))}+\norm{g}_{H^2(0,T;L^2(\Omega))}\right),
   \end{aligned}
\end{equation}
with $c(T)$ is a positive constant (depending on $T$). 
\end{lemma}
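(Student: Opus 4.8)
The plan is to obtain the additional time regularity first, by differentiating the equation in time and invoking the energy-level theory already in hand, and then to recover the missing spatial regularity through elliptic regularity applied at (almost) every frozen time. Concretely, I would regard $v:=\partial u/\partial t$ as a solution of \eqref{eq:Navier} with the differentiated data $\tilde F=\partial F/\partial t$, $\tilde g=\partial g/\partial t$, $\tilde u_0=u_1$, and $\tilde u_1=\div P(u_0)+F(\cdot,0)$, and check that these meet the hypotheses of Theorem \ref{thm trace regularity with nonzero B.C.}. Indeed, since $F\in L^1(0,T;H^1(\Omega))$ with $\partial F/\partial t\in L^1(0,T;L^2(\Omega))$ we have $F\in C([0,T];L^2(\Omega))$, so $F(\cdot,0)\in L^2(\Omega)$ and, because $u_0\in H^2(\Omega)$, $\tilde u_1\in L^2(\Omega)$; moreover $g\in H^2(0,T;L^2(\Gamma_0))\cap L^2(0,T;H^2(\Gamma_0))$ yields $\tilde g\in H^1(0,T;L^2(\Gamma_0))\cap L^2(0,T;H^1(\Gamma_0))$ after interpolation in time (Theorem \ref{th:continuity-time-L2-Hm}), and the second compatibility relation in \eqref{compatibility conditions on strong solution)} is precisely $\tilde g(\cdot,0)=\tilde u_0$ on $\Gamma_0$. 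Thus Theorem \ref{thm trace regularity with nonzero B.C.} delivers a solution $v$ with $v\in C([0,T];H^1(\Omega))$, $\partial v/\partial t\in C([0,T];L^2(\Omega))$, together with the associated estimate controlling these norms by the differentiated data.

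Next I would identify $v$ with $\partial u/\partial t$. Setting $w(t):=u_0+\int_0^t v(s)\,ds$, the choice $\tilde u_1=\div P(u_0)+F(\cdot,0)$ gives, after one integration in time and using the linearity of $\div P$, the identity $\partial^2 w/\partial t^2=\div P(w)+F$. The Dirichlet condition $w=g$ on $\Gamma_0$ follows from $v=\partial g/\partial t$ together with the first compatibility condition $u_0=g(\cdot,0)$, while the stress-free condition $P(w)\cdot n=0$ on $\Gamma_1$ follows from $P(v)\cdot n=0$ together with the third compatibility condition $P(u_0)\cdot n=0$ on $\Gamma_1$. Hence $w$ solves \eqref{eq:Navier} with the original data, and the uniqueness already established forces $w=u$, so that $\partial u/\partial t=v\in C([0,T];H^1(\Omega))$ and $\partial^2 u/\partial t^2=\partial v/\partial t\in C([0,T];L^2(\Omega))$.

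It then remains to upgrade $u$ to $C([0,T];H^2(\Omega))$. For this I would freeze time and read \eqref{eq:Navier}$_1$ as the stationary Lam\'e system $\div P(u(\cdot,t))=\partial^2 u/\partial t^2(\cdot,t)-F(\cdot,t)$ subject to $u=g$ on $\Gamma_0$ and $P(u)\cdot n=0$ on $\Gamma_1$. The right-hand side lies in $C([0,T];L^2(\Omega))$, the Dirichlet datum in $C([0,T];H^{3/2}(\Gamma_0))$ (again by time-interpolation of $g$), and the Neumann datum vanishes. This would produce $u(\cdot,t)\in H^2(\Omega)$ with the bound $\norm{u(\cdot,t)}_{H^2(\Omega)}\le c\big(\norm{\partial^2 u/\partial t^2(\cdot,t)-F(\cdot,t)}_{L^2(\Omega)}+\norm{g(\cdot,t)}_{H^{3/2}(\Gamma_0)}\big)$, and continuity in time of the data through the bounded linear elliptic solution operator would give $u\in C([0,T];H^2(\Omega))$. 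Combining the estimate from Theorem \ref{thm trace regularity with nonzero B.C.} applied to $v$ with this elliptic estimate, and bounding $\tilde u_1$ and $F(\cdot,0)$ as above, yields \eqref{eq:estimate-strong-sln} with the same $c(T)$; uniqueness is inherited from linearity.

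The main obstacle is the elliptic-regularity step: one must ensure that the stationary Lam\'e system with mixed Dirichlet--Neumann data genuinely delivers $H^2$ regularity, and that the resulting bound is continuous and uniform in $t$ up to the endpoints. Here the geometry is decisive---since $\Gamma_0$ and $\Gamma_1$ are disjoint smooth closed surfaces, no corner singularities arise at an interface between the two boundary conditions, so classical elliptic regularity for the strongly elliptic operator $\div P$ applies cleanly. The compatibility conditions in \eqref{compatibility conditions on strong solution)} play a dual role: they are what allow the identification $w=u$ in the second step (in particular $P(u_0)\cdot n=0$ guarantees the Neumann condition for $w$), and they make $u_0$ the admissible $t=0$ snapshot of the elliptic problem, which is exactly what secures continuity of $u$ in $H^2$ up to $t=0$.
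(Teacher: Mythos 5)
Your proposal is correct and follows essentially the same route as the paper: differentiate in time with exactly the same transferred data $\tilde F=\partial F/\partial t$, $\tilde g=\partial g/\partial t$, $\tilde u_0=u_1$, $\tilde u_1=\div P(u_0)+F(\cdot,0)$, identify the resulting solution with $\partial u/\partial t$, and then recover $u\in C([0,T];H^2(\Omega))$ by elliptic regularity for the stationary Lam\'e system at frozen time. The only (inessential) deviations are that you identify $v$ with $\partial u/\partial t$ by integrating in time and invoking uniqueness rather than by the paper's duality computation with test functions, and that the time-regularity of $\partial g/\partial t$ in $L^2(0,T;H^1(\Gamma_0))$ should be credited to the intermediate derivative theorem (Theorem \ref{th:intermediate-derivative}) rather than Theorem \ref{th:continuity-time-L2-Hm}.
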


\begin{proof}
From the given hypotheses and Lemma \ref{Lemma: existence of weak distribution}, there exists a unique $u$  satisfying \eqref{regularity lemma 2}, \eqref{additional regularity of lemma 2}, and 
\begin{equation} \label{weak adjoint soln-1}
\begin{split}
    \int_{0}^{T} (u,\psi)_{L^{2}(\Omega)} dt&=\langle u_1,\phi(\cdot,0)\rangle_{V^{-1}(\Omega)}
    -\left(u_0,\frac{\partial \phi}{\partial t}(\cdot,0)\right)_{L^2(\Omega)}
    \\
    &\quad-\int_{0}^{T} \int_{\Gamma_{0}}g \cdot (P(\phi)\cdot n) d\sigma dt
    +\int^T_0\langle F,\phi\rangle_{V^{-1}(\Omega)}\; dt,
\end{split}\end{equation}     
for all $\psi\in L^1(0,T;L^2(\Omega))$, with $\phi=S^{-1}\psi$ and $S$ defined in \eqref{eq:isomorphism-S}. In a similar fashion, there exists  a unique $\bar u$  satisfying \eqref{regularity lemma 2}, \eqref{additional regularity of lemma 2}, and 
\begin{equation} \label{weak adjoint soln-partial-t}
\begin{split}
\int_{0}^{T} (\bar u,\xi)_{L^{2}(\Omega)} dt&=\langle F(\cdot, 0)+\div P(u_0),\theta(\cdot,0)\rangle_{V^{-1}(\Omega)}
    -\left(u_1,\frac{\partial \theta}{\partial t}(\cdot,0)\right)_{L^2(\Omega)}
    \\
    &\quad-\int_{0}^{T} \int_{\Gamma_{0}}\frac{\partial g}{\partial t} \cdot (P(\theta)\cdot n) d\sigma dt
    +\int^T_0\left\langle \frac{\partial F}{\partial t},\theta\right\rangle_{ V^{-1}(\Omega)}\; dt,
\end{split}\end{equation}     
for all $\xi\in L^1(0,T;L^2(\Omega))$, with $\theta=S^{-1}\xi$. We note that,  by assumption and by Sobolev embedding theorem, we have that $F\in C([0,T];L^2(\Omega))$. 

Now consider $\xi=\chi\varphi\in C^\infty_c((0,T);L^2(\Omega))$ with $\chi\in C^\infty_c((0,T);\R)$ and $\varphi\in L^2(\Omega;\R^3)$ (and correspondingly $\theta=S^{-1}\xi\in C^\infty_c((0,T);H^1(\Omega))$) in \eqref{weak adjoint soln-partial-t}, and $\psi=\displaystyle \frac{\partial \xi}{\partial t}$ (and correspondingly, $\phi=\displaystyle\frac{\partial \theta}{\partial t}$) in \eqref{weak adjoint soln-1}. We find that 
\[
\int^T_0\chi'\left(u,\varphi\right)_{L^2(\Omega)}\; dt=-\int^T_0\chi(\bar u,\varphi)_{L^2(\Omega)}\; dt. 
\]
The latter implies that 
\[
\frac{\partial u}{\partial t}=\bar u\in C([0,T];H^1(\Omega)), \qquad \frac{\partial^2 u}{\partial t^2}=\frac{\partial \bar u}{\partial t}\in C([0,T];L^2(\Omega)). 
\]

It remains to show that the constructed weak solution $u$ satisfies $u\in L^\infty(0,T;H^2(\Omega))$ and all the equations in \eqref{eq:Navier} hold almost everywhere in space and time. From our assumptions on $g$, $u_0$, and $u_1$, together with the compatibility conditions \eqref{compatibility conditions on strong solution)}, we can apply Theorem \ref{th:continuity-time-L2-Hm} to $g$ (with $H=L^2(\Gamma_0)$, $V=H^2(\Gamma_0)$ and $m=2$) to find that 
\[
g\in C([0,T];H^{3/2}(\Gamma_0))
\]
and 
\[
\norm{g}_{C([0,T];H^{3/2}(\Gamma_0))}\le c_1\left(\norm{g}_{L^2(0,T;H^2(\Gamma_0))}+\norm{g}_{H^2(0,T;L^2(\Gamma_0))}
+\norm{u_0}_{H^2(\Omega)}+\norm{u_1}_{H^1(\Omega)}\right). 
\]
We can then find a lift $\hat g\in C([0,T];H^{5/2}(\Omega))$ such that $\hat g=g$ on $\Gamma_0\times (0,T)$ and $P(\hat g)\cdot n=0$ on $\Gamma_1\times(0,T)$. In addition, 
\[
\norm{\hat g}_{C([0,T];H^{5/2}(\Omega))}\le c_2\left(\norm{g}_{L^2(0,T;H^2(\Gamma_0))}+\norm{g}_{H^2(0,T;L^2(\Gamma_0))}
+\norm{u_0}_{H^2(\Omega)}+\norm{u_1}_{H^1(\Omega)}\right). 
\]
Now consider the solution $\hat u$ to the following elliptic problem (with time now considered as fixed parameter)
\begin{equation}\label{eq:elliptic}\left\{\begin{aligned}
  &\div P(\hat u)=\frac{\partial^2 u}{\partial t^2}-\div P(\hat g) - F\qquad&&\text{in }\Omega,
  \\
  &\hat u=0\qquad&&\text{on }\Gamma_0,
  \\
  &P(\hat u)\cdot n=0\qquad&&\text{on }\Gamma_1.
\end{aligned}\right.\end{equation}
By elliptic regularity (see \cite[Theorem 6.3-6 \& following remarks on p. 298]{ciarlet}), we know that 
\[
\hat u\in C([0,T];H^{2}(\Omega))
\]
and the above equations are satisfied almost everywhere in space and time. Set $w:=\hat u+\hat g\in C([0,T];H^{2}(\Omega))$ and note that it is the unique strong solution of the elliptic problem 
\[\left\{\begin{aligned}
  &\div P(w)=\frac{\partial^2 u}{\partial t^2}- F\qquad&&\text{in }\Omega\times(0,T),
  \\
  &w=g\qquad&&\text{on }\Gamma_0\times(0,T),
  \\
  &P(w)\cdot n=0\qquad&&\text{on }\Gamma_1\times(0,T).
\end{aligned}\right.\]
Multiply the first equation in the latter system by $\phi=S^{-1}\psi\in C^\infty_c([0,T);H^1_0(\Omega))$ (with $\psi\in C^\infty_c([0,T);L^2(\Omega))$ and integrate the resulting equation over $(0,T)$, using integration by parts, we find that $w$ satisfies 
\begin{multline*}
-\int^T_0\int_{\Gamma_0}g\cdot P(\phi)\cdot n\;d\sigma d t=\int^T_0(w,\psi)_{L^2(\Omega)}
\\
-\langle u_1,\phi(\cdot,0)\rangle_{V^{-1}(\Omega)}
+\left(u_0,\frac{\partial \phi}{\partial t}(\cdot,0)\right)_{L^2(\Omega)}
-\int^T_0(F,\phi)_{L^2(\Omega)}\; dt.
\end{multline*}
By uniqueness of the solution to \eqref{weak adjoint soln-1}, we conclude that $u=w$. 
Finally, the estimate \eqref{eq:estimate-strong-sln} follows from the above considerations and \eqref{eq:energy-estimate-adjoint-regular} applied to $u$ and $\bar u=\displaystyle \frac{\partial u}{\partial t}$ from \eqref{weak adjoint soln-1} and \eqref{weak adjoint soln-partial-t}, respectively. 
\end{proof}

Now we can use the hypotheses and compatibility conditions of Lemma \ref{existence of strong solution} to unveil some ``hidden regularity'' of the stress vector $P(u)\cdot n$ as in the following theorem.
 \begin{theorem} \label{th:strong trace regularity}
Under the same assumptions and compatibility conditions as in Lemma \ref{existence of strong solution} we have that the solution $u$ to \eqref{eq:Navier}, beside  \eqref{eq:regularity of solution}, also satisfies the following regularity property
\begin{equation} \label{eq:regularity of the stress vector}
    P(u) \cdot n \in L^{2}(0,T;H^{1}(\Gamma_{0})) \cap H^{1} (0,T;L^2(\Gamma_{0})).
\end{equation}
In particular, the following estimate holds 
\begin{equation} \label{eq:estimate-trace regularity-strong-sln}
   \begin{aligned}
    \norm{P(u)\cdot n}_{L^2(0,T;H^1(\Gamma_0))}&+\norm{P(u)\cdot n}_{H^1(0,T;L^2(\Gamma_0))}
    \\
    &\le {c(T)}
    \left(\norm{F}_{L^{1}(0,T;H^{1}(\Omega))}+\norm{\frac{\partial F}{\partial t}}_{L^1(0,T;L^2(\Omega))}
    +\norm{u_{0}}_{H^{2}(\Omega)}\right.
    \\
    &\qquad \qquad \qquad \qquad \qquad \left.
    +\norm{u_{1}}_{H^1(\Omega))}
    +\norm{g}_{L^2(0,T;H^2(\Omega))}+\norm{g}_{H^2(0,T;L^2(\Omega))}\right),
   \end{aligned}
\end{equation}
with { $c(T)$ a positive constant (depending on $T$)}. 
\end{theorem}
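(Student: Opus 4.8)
The plan is to prove the two memberships in \eqref{eq:regularity of the stress vector} separately: the time regularity $P(u)\cdot n\in H^1(0,T;L^2(\Gamma_0))$ and the tangential regularity $P(u)\cdot n\in L^2(0,T;H^1(\Gamma_0))$. As in the proof of Theorem~\ref{thm trace regularity with zero B.C.}, I would carry out all computations first for smooth data satisfying the compatibility conditions \eqref{compatibility conditions on strong solution)} (so that $u$ is smooth and every boundary trace is classically defined), obtain the estimate \eqref{eq:estimate-trace regularity-strong-sln}, and then recover the general case by density together with the continuity of the solution operator from Lemma~\ref{existence of strong solution}.

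\emph{Time regularity.} The key observation is that $v:=\partial u/\partial t$ is \emph{itself} a solution of \eqref{eq:Navier}: differentiating \eqref{eq:Navier} in $t$ and using that $n$ is time-independent, $v$ solves the same system with $F$, $u_0$, $u_1$, $g$ replaced by $\partial F/\partial t$, $u_1$, $\div P(u_0)+F(\cdot,0)$, $\partial g/\partial t$, respectively, and it still satisfies $P(v)\cdot n=0$ on $\Gamma_1$. Under \eqref{assumptions on strong solution}--\eqref{compatibility conditions on strong solution)} these new data meet the hypotheses of Theorem~\ref{thm trace regularity with nonzero B.C.}: indeed $\div P(u_0)+F(\cdot,0)\in L^2(\Omega)$ because $u_0\in H^2(\Omega)$ and $F\in C([0,T];L^2(\Omega))$, the datum $\partial g/\partial t$ lies in $H^1(0,T;L^2(\Gamma_0))\cap L^2(0,T;H^1(\Gamma_0))$, and the compatibility $\partial g/\partial t(\cdot,0)=u_1$ on $\Gamma_0$ is exactly the second condition in \eqref{compatibility conditions on strong solution)}. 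Applying Theorem~\ref{thm trace regularity with nonzero B.C.} to $v$ gives $P(v)\cdot n=\partial(P(u)\cdot n)/\partial t\in L^2(0,T;L^2(\Gamma_0))$ with the stated bound; together with the $L^2(0,T;L^2(\Gamma_0))$ control of $P(u)\cdot n$ (Theorem~\ref{thm trace regularity with nonzero B.C.} applied to $u$) this yields the $H^1(0,T;L^2(\Gamma_0))$ part.

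\emph{Tangential regularity.} This is the genuinely hidden statement, since $u\in C([0,T];H^2(\Omega))$ only gives $\nabla u|_{\Gamma_0}\in H^{1/2}(\Gamma_0)$ via the trace theorem. I would rerun the multiplier computation of \eqref{equation after multiplication in thm1}--\eqref{eq:final equality of thm 3.1} on tangential derivatives of $u$. Fix a smooth field $a$ on $\overline\Omega$ that is tangent to $\Gamma_0$ along $\Gamma_0$ and vanishes near $\Gamma_1$, and set $\partial_\tau:=a\cdot\nabla$, $v:=\partial_\tau u$. Differentiating \eqref{eq:Navier} tangentially gives $\partial^2 v/\partial t^2=\div P(v)+\tilde F$ with $\tilde F:=\partial_\tau F+[\partial_\tau,\div P]u$; the commutator is second order in $u$, hence lies in $C([0,T];L^2(\Omega))$ thanks to $u\in C([0,T];H^2(\Omega))$, while $\partial_\tau F\in L^1(0,T;L^2(\Omega))$ by $F\in L^1(0,T;H^1(\Omega))$, so $\tilde F\in L^1(0,T;L^2(\Omega))$, and on $\Gamma_0$ one has $v=\partial_\tau g$, controlled by the hypotheses on $g$. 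Taking the $L^2$-inner product of the equation for $v$ with $\nabla v\cdot h$ (the same $h$, equal to $n$ on $\Gamma_0$ and $0$ on $\Gamma_1$) and integrating by parts as before, the $\Gamma_0$-boundary term reproduces $\tfrac12\int_0^T\int_{\Gamma_0}[\mu|\nabla v|^2+(\lambda+\mu)(\div v)^2]$ (after treating the nonhomogeneous terms through an extension of $\partial_\tau g$, as in \eqref{eq:lhs of thm 2}--\eqref{eq:estimate-nablau-with-nablaudotn-g}), while all volume terms, the initial terms, and the extra $\Gamma_0$-integral $\tfrac12\int_0^T\int_{\Gamma_0}(\partial_t\partial_\tau g)^2$ are bounded by the data via \eqref{eq:estimate-strong-sln}. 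This gives $\nabla\partial_\tau u\in L^2(0,T;L^2(\Gamma_0))$; letting $a$ range over a finite family of tangential fields spanning the tangent spaces of $\Gamma_0$, and using $\partial_\tau(\nabla u|_{\Gamma_0})=(\nabla\partial_\tau u)|_{\Gamma_0}-(\nabla a)^T\nabla u|_{\Gamma_0}$ with the last term already in $L^2(0,T;L^2(\Gamma_0))$, all tangential derivatives of $\nabla u|_{\Gamma_0}$---hence of $P(u)\cdot n$---are controlled, giving $P(u)\cdot n\in L^2(0,T;H^1(\Gamma_0))$.

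The main obstacle is this last step. Unlike $\partial u/\partial t$, the tangential derivative $v=\partial_\tau u$ does \emph{not} satisfy the homogeneous Neumann condition on $\Gamma_1$, so Theorem~\ref{thm trace regularity with nonzero B.C.} cannot be invoked as a black box; one must rerun the multiplier identity directly, relying on $h\equiv0$ near $\Gamma_1$ to annihilate the $\Gamma_1$-boundary contribution. Moreover, the commutator $[\partial_\tau,\div P]u$ carries second derivatives of $u$, so the argument closes only because Lemma~\ref{existence of strong solution} provides $u\in C([0,T];H^2(\Omega))$ and the extra force regularity in \eqref{assumptions on strong solution}. Finally, since the boundary integrals $\int_{\Gamma_0}|\nabla v|^2$ presuppose the very trace regularity being sought, the whole computation must be performed on smooth approximating data and passed to the limit at the end.
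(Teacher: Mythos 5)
Your proposal is correct and follows essentially the same route as the paper: the $H^1(0,T;L^2(\Gamma_0))$ part by applying Theorem~\ref{thm trace regularity with nonzero B.C.} to $\partial u/\partial t$ (which solves \eqref{eq:Navier} with data $\partial F/\partial t$, $u_1$, $\div P(u_0)+F(\cdot,0)$, $\partial g/\partial t$), and the $L^2(0,T;H^1(\Gamma_0))$ part by a localized tangential derivative $w$, controlling the commutators with $\div P$ and with $P(\cdot)\cdot n$ using $u\in C([0,T];H^2(\Omega))$ and the already-established $L^2(\Gamma_0\times(0,T))$ trace bound. The only inaccuracy is your closing claim that Theorem~\ref{thm trace regularity with nonzero B.C.} cannot be invoked as a black box for the tangential derivative: since your field $a$ (the paper's cutoff $\eta$) vanishes near $\Gamma_1$, the localized derivative $w$ does satisfy $P(w)\cdot n=0$ on $\Gamma_1$ identically, and the paper indeed concludes by applying that theorem directly to $w$ rather than rerunning the multiplier identity.
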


To prove Theorem \ref{th:strong trace regularity} we will need the following proposition.
\begin{proposition} \label{proposition for stress vector}
Assume that $P(u)\cdot n \in L^{2}(0,T;L^2(\Gamma_0))$ and $u=0$ on $\Gamma_0$, then  
\begin{equation} \label{corollary for stress vector}
    P(u)\in L^{2}(0,T;L^2(\Gamma_0)). 
    \end{equation}
\end{proposition}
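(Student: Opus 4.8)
The plan is to exploit the homogeneous condition $u=0$ on $\Gamma_0$ to reduce the full gradient $\nabla u$ to its normal part on $\Gamma_0$, and then to invert the resulting explicit, pointwise, linear relation between the stress vector $P(u)\cdot n$ and the full stress tensor $P(u)$.

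First I would record the geometric reduction. Since $u$ vanishes on $\Gamma_0$, all of its tangential derivatives along $\Gamma_0$ vanish; writing $\nabla u=\sum_j (\nabla u\cdot e_j)\otimes e_j$ for an orthonormal frame $\{e_j\}=\{\tau_1,\tau_2,n\}$ adapted to $\Gamma_0$, the two tangential terms drop and
\[
\nabla u=(\nabla u\cdot n)\otimes n\qquad\text{on }\Gamma_0.
\]
This is precisely the identity already exploited in \eqref{eq:rhs_1st_part} through \eqref{eq:nabla-u-dot-n-norm}, where one has $|\nabla u|^2=|\nabla u\cdot n|^2$ on $\Gamma_0$. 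Setting $a:=\nabla u\cdot n$ and $s:=n\cdot\nabla u\cdot n=a\cdot n$, the definition \eqref{eq:piola-stress} then gives, on $\Gamma_0$,
\[
P(u)=\mu\,(a\otimes n+n\otimes a)+\lambda\,s\,I,\qquad P(u)\cdot n=\mu\,a+(\mu+\lambda)\,s\,n,
\]
where $I$ denotes the identity tensor.

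Next I would invert this relation. Taking the normal component of the stress vector yields $n\cdot(P(u)\cdot n)=(2\mu+\lambda)\,s$, so that, using $2\mu+\lambda>0$,
\[
s=\frac{n\cdot(P(u)\cdot n)}{2\mu+\lambda},\qquad a=\frac1\mu\bigl(P(u)\cdot n-(\mu+\lambda)\,s\,n\bigr).
\]
Both $s$ and $a$ are thus explicit, pointwise, linear expressions in $P(u)\cdot n$ whose coefficients are smooth functions of the (smooth) normal field $n$. Substituting back into the formula for $P(u)$ produces a pointwise identity $P(u)=\mathcal L_n\!\left(P(u)\cdot n\right)$, where $\mathcal L_n$ is an explicit linear map with smooth coefficients; in particular $|P(u)|\le C(\mu,\lambda)\,|P(u)\cdot n|$ on $\Gamma_0$. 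Integrating this bound over $\Gamma_0\times(0,T)$ yields \eqref{corollary for stress vector}, together with the quantitative estimate $\norm{P(u)}_{L^2(0,T;L^2(\Gamma_0))}\le C\,\norm{P(u)\cdot n}_{L^2(0,T;L^2(\Gamma_0))}$.

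The delicate point, and the one I expect to be the main obstacle, is the regularity needed to justify the pointwise trace identities: the decomposition $\nabla u=(\nabla u\cdot n)\otimes n$ presupposes traces of $\nabla u$ that are not a priori in $L^2(\Gamma_0)$ when $u$ has only the natural weak regularity (where $P(u)\cdot n$ lives merely in $H^{-1/2}(\Gamma_0)$). I would handle this by first establishing the identity $P(u)=\mathcal L_n(P(u)\cdot n)$ for smooth $u$ vanishing on $\Gamma_0$, and then extending it by density to the general case as an identity in the distributional sense on $\Gamma_0$, using continuity of the normal-trace operator and of $\mathcal L_n$ in these weak topologies. Once the identity is known to hold distributionally, the extra hypothesis $P(u)\cdot n\in L^2(\Gamma_0)$ forces the right-hand side---and hence $P(u)$---into $L^2(\Gamma_0)$, because $\mathcal L_n$ maps $L^2(\Gamma_0)$ continuously into itself. (In the intended application to strong solutions, where $\nabla u$ already has $L^2(\Gamma_0)$ traces, this limiting step may be bypassed and the pointwise bound used directly.) The algebraic heart of the statement is the invertibility of $(a,s)\mapsto \mu a+(\mu+\lambda)s\,n$, guaranteed precisely by $2\mu+\lambda>0$.
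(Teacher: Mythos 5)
Your proposal is correct and takes essentially the same route as the paper: both arguments rest on the pointwise consequence of $u=0$ on $\Gamma_0$ that $\nabla u=(\nabla u\cdot n)\otimes n$ there (equation \eqref{eq:nabla-u-normal}), which makes the map $\nabla u\mapsto P(u)\cdot n$ pointwise invertible and hence bounds $|P(u)|$ by a constant times $|P(u)\cdot n|$. The paper reads this off from the identity $|P(u)\cdot n|^2=\mu^2|\nabla u|^2+(3\mu^2+4\mu\lambda+\lambda^2)(\div u)^2$ obtained via \eqref{eq:div^2} and \eqref{eq:sym-nabla-u-dot-n-norm}, while you invert the linear relation explicitly; the two computations are equivalent, with yours additionally making the quantitative constant explicit.
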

\begin{proof}
Using \eqref{eq:div^2} and \eqref{eq:sym-nabla-u-dot-n-norm} from Appendix \ref{sec:equalities}, we can write 
    \begin{equation*}
        \left|P(u)\cdot n\right|^2=\mu^2 (|\nabla u|^{2} + 3 (\mathrm{div}u)^2)+4\mu \lambda (\div{u})^2+\lambda^2(\div{u})^2,
    \end{equation*}
implying that $\nabla u\in L^2(0,T;L^2(\Gamma_0))$ and $\div u\in L^2(0,T;L^2(\Gamma_0))$, and this proves \eqref{corollary for stress vector}.
\end{proof}

\begin{proof}[Proof of Theorem \ref{th:strong trace regularity}]
For $\alpha=1,2$, we define the linear operator 
\[
B^{(\alpha)}:\; v\in H^1(\R^3)\mapsto B^{(\alpha)}v=\sum^3_{k=1}b^\alpha_k\frac{\partial v}{\partial x_k}\in L^2(\R^3)
\]
where $b^\alpha$ is a (time-independent) smooth vector field whose components satisfy 
\begin{equation}\label{eq:tangential-vector}
b^\alpha_i(x)=\left\{\begin{aligned}
&\frac{\partial \Theta_i}{\partial y_\alpha}\qquad&&\text{if $x=\Theta(y)$ for some $y\in \mathbb{B}_{r_i}(0)$ and some $i\in\{1,\dots,m\}$},
\\
&\delta_{i\alpha}\qquad&&\text{otherwise}
\end{aligned}\right.\end{equation}
and $\Theta$ denotes the inverse of the parametrization $\Phi$ given in \eqref{eq:parametrization-around-boundary} and \eqref{eq:parametrization-around-boundary2}, when $\mathcal D=\Omega$ and $d=3$. We note that $b^\alpha\cdot n=0$ on $\partial \Omega$. 

Let $u$ be the solution to \eqref{eq:Navier} from Lemma \ref{existence of strong solution}. In particular, it satisfies $P(u)\cdot n\in L^2(0,T;L^2(\Gamma_0))$ from Theorem \ref{thm trace regularity with nonzero B.C.}. Now, we consider a partition of unity $\{\psi_i\}_{i=1}^m$ subordinated to an open cover $\{\mathbb{B}_{r_i}(x^i)\}_{i=1}^m$ of $\overline{\Omega}$ as in \eqref{eq:open-cover}, and a smooth cut-off function $\eta$ supported on an open set $\mathcal O$ such that $\overline{\mathcal C}\subset \mathcal O\subset \Omega$ and $\eta=1$ on $\overline{\mathcal C}$. Define 
\begin{equation}\label{eq:w-Balpha-u}
w:=\eta B^{(\alpha)}u=\sum^m_{i=1}\eta B^{(\alpha)}(\psi_i u). 
\end{equation}
Given the above observations, to prove that $P(u) \cdot n \in L^{2}(0,T;H^{1}(\Gamma_{0}))$, it remains to prove that 
\[
\sum^m_{i=1}\int_{\R^2}\left|B^{(\alpha)}(\psi_iP(u)\cdot n)|_{x=\Theta(y_1,y_2,0)}\right|^2\;dy_1dy_2<\infty\qquad\text{for each }\alpha=1,2. 
\]
The latter is true provided that 
\begin{equation}\label{eq: regularity of the operator}
P(w)\cdot n \in L^{2}(0,T;L^2(\Gamma_0)).
\end{equation} 
In fact, we can formally write the following for $\alpha=1,2$:  
\begin{equation} \label{eq:relation between B(P)-P(B)}
    \begin{aligned}
       &B^{(\alpha)}(P(u) \cdot n)=\sum^3_{\ell=1}b_{\ell}^{\alpha}\frac{\partial}{\partial x_\ell}(P(u) \cdot n) 
       \\
        &\quad =\sum_{\ell=1}^{3}b_{\ell}^{\alpha}\frac{\partial}{\partial x_\ell}(2\mu e_{kj}(u)+\lambda \div{u}\delta_{kj})n_j\mathsf{e}^k+P(u-\hat g)\cdot B^{(\alpha)}n 
        +P(\hat g)\cdot B^{(\alpha)}(n) 
        \\
        &\quad =P(w)\cdot n
        -(\mu \frac{\partial b_{\ell}^{\alpha}}{\partial x_j}\frac{\partial u_k}{\partial x_\ell}+\mu \frac{\partial b_\ell^\alpha}{\partial x_k}\frac{\partial u_j}{\partial x_\ell}+\lambda\frac{\partial u_n}{\partial x_\ell}\frac{\partial b_\ell^\alpha}{\partial x_n}\delta_{kj})n_j\mathsf{e}^k
        \\
        &\qquad\qquad\qquad\qquad\qquad\qquad\qquad\qquad\qquad\qquad\qquad +P(u-\hat g)\cdot B^{(\alpha)} n +P(\hat g)\cdot B^{(\alpha)}n,
    \end{aligned}
\end{equation}
where $\hat g\in L^2(0,T;H^{5/2}(\Omega))$ is a solution of \eqref{eq:divP0-extension} (see \cite[Chapter 2, Section 5, Theorem 5.4]{LM-1}). We note that the last five terms on the right-hand side of \eqref{eq:relation between B(P)-P(B)} can be controlled thanks to the regularity of $u$, $g$, and $b^\alpha_\ell$, and thanks to Proposition \ref{proposition for stress vector}.

Let us (again formally) apply $\eta B^{(\alpha)}$ to both sides of \eqref{eq:Navier} and observe that
    \begin{equation} \label{eq: navier with w}
        \frac{\partial^2w}{\partial t^2}= \div{P(w)}+\eta B^{(\alpha)}F+\Tilde{R}(u),        
    \end{equation}
where $\Tilde{R}(u)$ is given by 
     \begin{equation*}
         \begin{aligned}
           \Tilde{R}(u):= -\mu\div(B^{(\alpha)}u \otimes\nabla \eta+\nabla \eta \otimes B^{(\alpha)}u)
           -\lambda\nabla(\nabla \eta \cdot B^{(\alpha)}u)
           -P(B^{(\alpha)}u)\cdot \nabla\eta+\eta R(u)
         \end{aligned}
     \end{equation*}
and \[
    \begin{split}
          R(u)&:=-\mu\sum^3_{k=1}\left[\frac{\partial u_i}{\partial x_k}\Delta b^\alpha_k 
          +2\frac{\partial b^\alpha_k}{\partial x_j}\frac{\partial^2 u_i}{\partial x_j\partial x_k}\right]\mathsf{e}^i
          \\
          &\qquad -(\lambda+\mu)\sum^3_{k=1}\left[\frac{\partial^2b^\alpha_k}{\partial x_i\partial x_\ell}\frac{\partial u_\ell}{\partial x_k}
          +\frac{\partial b^\alpha_k}{\partial x_\ell}\frac{\partial^2u_\ell}{\partial x_i\partial x_k}
          +\frac{\partial b^\alpha_k}{\partial x_i}\frac{\partial \div(u)}{\partial x_k}\right]\mathsf{e}^i.
      \end{split}
    \]
Set $w_0:=\eta B^{(\alpha)}u_{0}\in H^{1}(\Omega)$, $w_1:=\eta B^{(\alpha)}u_{1} \in L^{2}(\Omega)$, and $g_w:=\eta B^{(\alpha)}g \in L^2(0,T;H^1(\Gamma_{0}))$. We notice that, by Theorem \ref{th:continuity-time-L2-Hm}, assumption \eqref{assumptions on strong solution}, and the compatibility conditions \eqref{compatibility conditions on strong solution)}, we have that $g\in C([0,T];H^{3/2}(\Gamma_0))$. Thus, the compatibility condition 
$g_w(\cdot,0)=w_0$ holds on $\Gamma_0$ and $P(w_0)\cdot n=0$ on $\Gamma_1$. Furthermore, by the Theorem \ref{th:intermediate-derivative}, $\displaystyle \frac{\partial g}{\partial t}\in L^2(0,T;H^1(\Gamma_0))$. So that 
\[
\frac{\partial g_w}{\partial t}\in L^2(0,T;L^2(\Gamma_0)). 
\]
Finally, by Lemma \ref{existence of strong solution}, we have that $\tilde F:=\eta B^{(\alpha)}F+\Tilde{R}(u)\in L^{2}(0,T;L^{2}(\Omega))$, and we can apply Theorem \ref{thm trace regularity with nonzero B.C.} to conclude that $w$ --defined in \eqref{eq:w-Balpha-u}-- is a weak solution of the following problem 
\begin{equation}
    \begin{aligned}
        & \frac{\partial^2 w}{\partial t^2}=\mathrm{div}P(w)+\tilde F 
            &&\text{ in }\Omega\times(0,T),
        \\
        & w(x,0)=w_0 &&\text{ in }\Omega,
        \\
        & \frac{\partial w}{\partial t}(x,0)=w_1 &&\text{ in }\Omega,
        \\
        & w=g_w &&\text{ on }\Gamma_0\times(0,T), 
        \\
        & P(w)\cdot n=0 &&\text{ on }\Gamma_1\times(0,T).   
    \end{aligned}
\end{equation}
In particular, \eqref{trace regularity for nonhomogeneous case} holds with $w$ in place of $u$ (note also that $w=\eta B^{(\alpha)}u\equiv B^{(\alpha)}u$ on $\Gamma_0$). 

The fact that $P(u)\cdot n\in H^1(0,T;L^2(\Gamma_0))$ follows immediately from the proof of Lemma \ref{existence of strong solution} (since $\bar u=\displaystyle\frac{\partial u}{\partial t}$ satisfies \eqref{weak adjoint soln-partial-t}) and from \eqref{trace regularity for nonhomogeneous case} with $u$ now replaced by $\displaystyle\frac{\partial u}{\partial t}$. 
\end{proof}

Since \eqref{eq:Navier} is linear, from Lemma \ref{existence of strong solution} and by repeatedly taking time-derivatives of \eqref{eq:Navier}, one immediately ensures the existence of solutions corresponding to data in higher regularity classes (see next theorem for the precise statement). In order to prove the corresponding sharp trace regularity of the stress vector $P(u)\cdot n$ on $\Gamma_0$, we consider the $m$-tuple $(\alpha_1,\dots,\alpha_m)$, with $m\in \N\setminus\{0\}$ and $\alpha_i\in \{1,2\}$ for $i=1,\dots,m$, and the time-independent operator
\begin{equation*}
\mathcal B^{(\alpha_1,\dots,\alpha_m)}:\; v\in H^m(\Omega)
\mapsto \mathcal B^{(\alpha_1,\dots,\alpha_m)}v:=\sum^3_{i_1,\dots,i_m=1}b^{(\alpha_1)}_{i_1}b^{(\alpha_m)}_{i_m}\frac{\partial^m v}{\partial x_{i_1}\dots\partial x_{i_m}}\;\in L^2(\Omega),
\end{equation*}
where the coefficients $b^{\alpha}_{i}$ are defined in \eqref{eq:tangential-vector}. We then follow the strategy of the proof of Theorem \ref{th:strong trace regularity}, by replacing the tangential operator $B^{(\alpha)}$ with $\mathcal B^{(\alpha_1,\dots,\alpha_m)}$, to prove the following result. 

\begin{theorem}\label{th:higher-regularity}
Let $k\in \N$, and assume that\footnote{We use the notation $H^0(\Omega)=L^2(\Omega)$. } 
\begin{equation}\label{eq:data-high-regularity}\begin{aligned}
&\frac{\partial^j F}{\partial t^j}\in L^1(0,T;H^{k-j}(\Omega))\quad\text{ for }j=0,\dots,k,
\\
& u_{0}\in H^{k+1} (\Omega), \quad u_{1} \in H^{k} (\Omega),  
\\
&g \in L^{2}(0,T;H^{k+1}(\Gamma_{0})) \cap H^{k+1}(0,T;L^{2}(\Gamma_{0})). 
\end{aligned}
\end{equation}
We suppose that the following {\em compatibility conditions} hold for each $j=0,\dots, k$ 
\[\begin{aligned}
&\frac{\partial^{j}g}{\partial t^{j}}\left.\right|_{t=0}=u^{(j)}\qquad&&\text{on }\Gamma_0,
\\
&P(u^{(j)})\cdot n=0\qquad&&\text{on }\Gamma_1,
\end{aligned}\]
where $u^{(0)}=u_0$, $u^{(1)}=u_1$, and if $j\ge 2$, $u^{(j)}\in H^{k+1-j}(\Omega)$ satisfies 
\[
\begin{aligned}
u^{(j)}=\div P(u^{(j-2)})+\frac{\partial^{j-2}F}{\partial t^{j-2}}\left.\right|_{t=0}\quad&&\text{on }\Omega. 
\end{aligned}
\]
Then, there exists a unique solution to \eqref{eq:Navier} that enjoys the following regularity properties: 
\begin{equation} \label{eq:high-interp-regularity}
\begin{aligned}
&\frac{\partial^{j} u}{\partial t^j} \in C([0,T];H^{k+1-j}(\Omega)) \qquad \text{ for each }j=0,\dots,k+1,
\\
&P(u) \cdot n \in L^{2}(0,T;H^{k}(\Gamma_{0})) \cap H^{k} (0,T;L^2(\Gamma_{0})).
\end{aligned}  
\end{equation}  
In particular, $u$ satisfies \eqref{eq:Navier} almost everywhere in space and time, and the following estimate holds 
\begin{equation} \label{eq:high-interp-estimate}
   \begin{aligned}
        \sup_{t\in [0,T]}\sum^{k+1}_{j=0}\norm{\frac{\partial^{j} u(t)}{\partial t^j}}_{H^{k+1-j}(\Omega)}
        &+\norm{P(u) \cdot n}_{L^{2}(0,T;H^{k}(\Gamma_{0}))}
        +\norm{P(u)\cdot n}_{H^{k} (0,T;L^2(\Gamma_{0}))}
        \\
        \le {c(T)}&
        \left(\sum^k_{j=0}\norm{\frac{\partial^j F}{\partial t^j}}_{L^1(0,T;H^{k-j}(\Omega))}
        +\norm{u_{0}}_{H^{k+1}(\Omega)}
        +\norm{u_{1}}_{H^{k}(\Omega)}
        \right.
        \\
        &\qquad\qquad\qquad\qquad\left. 
        +\norm{g}_{L^2(0,T;H^{k+1}(\Omega))}+\norm{g}_{H^{k+1}(0,T;L^2(\Omega))}\right),
   \end{aligned}
\end{equation}
where  $c(T)$ is a positive constant (depending on $T$). 
\end{theorem}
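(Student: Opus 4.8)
The plan is to proceed by induction on $k$, taking Lemma \ref{existence of strong solution} together with Theorem \ref{th:strong trace regularity} as the base case $k=1$ (and Theorem \ref{thm trace regularity with nonzero B.C.} as the degenerate case $k=0$). Throughout, I would exploit the linearity of \eqref{eq:Navier}, which guarantees uniqueness (the difference of two solutions solves the homogeneous problem) and lets the data be treated one component at a time when convenient.

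\emph{Existence and time regularity.} First I would establish \eqref{eq:high-interp-regularity}$_1$. Since the coefficients of \eqref{eq:Navier} are time-independent, $v:=\partial u/\partial t$ formally solves the same system with forcing $\partial F/\partial t$, initial displacement $u_1$, initial velocity $u^{(2)}=\div P(u_0)+F(\cdot,0)$, and boundary datum $\partial g/\partial t$; by the recursive definition of the $u^{(j)}$ and the assumed compatibility conditions, these data satisfy the hypotheses of the theorem at level $k-1$. The inductive hypothesis applied to $v$ then yields $\partial^{j}u/\partial t^{j}\in C([0,T];H^{k+1-j}(\Omega))$ for $j=1,\dots,k+1$. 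The top spatial regularity $u\in C([0,T];H^{k+1}(\Omega))$ is recovered by elliptic regularity for the Lam\'e operator applied, at each fixed $t$, to $\div P(u)=\partial^2 u/\partial t^2-F$, whose right-hand side lies in $C([0,T];H^{k-1}(\Omega))$, with boundary data $g$ on $\Gamma_0$ and the stress-free condition on $\Gamma_1$ (cf.\ the elliptic step in the proof of Lemma \ref{existence of strong solution}).

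\emph{Tangential trace regularity.} To prove $P(u)\cdot n\in L^2(0,T;H^{k}(\Gamma_0))$ I would, following the proof of Theorem \ref{th:strong trace regularity} with $B^{(\alpha)}$ replaced by the order-$k$ tangential operator $\mathcal B^{(\alpha_1,\dots,\alpha_k)}$, set $w:=\eta\,\mathcal B^{(\alpha_1,\dots,\alpha_k)}u$ and apply $\eta\,\mathcal B^{(\alpha_1,\dots,\alpha_k)}$ to \eqref{eq:Navier}. This produces a Navier system for $w$ with forcing $\tilde F=\eta\,\mathcal B^{(\alpha_1,\dots,\alpha_k)}F+\tilde R(u)$, where $\tilde R(u)$ collects the commutators of $\mathcal B^{(\alpha_1,\dots,\alpha_k)}$ and $\eta$ with $\div P$; these involve at most $k+1$ spatial derivatives of $u$ and hence lie in $C([0,T];L^2(\Omega))\subset L^1(0,T;L^2(\Omega))$ by the previous step, while $\eta\,\mathcal B^{(\alpha_1,\dots,\alpha_k)}F\in L^1(0,T;L^2(\Omega))$ since $F\in L^1(0,T;H^k(\Omega))$. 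The data $w_0=\eta\,\mathcal B^{(\alpha_1,\dots,\alpha_k)}u_0\in H^1(\Omega)$, $w_1=\eta\,\mathcal B^{(\alpha_1,\dots,\alpha_k)}u_1\in L^2(\Omega)$, and $g_w=\eta\,\mathcal B^{(\alpha_1,\dots,\alpha_k)}g$ satisfy the compatibility condition $g_w(\cdot,0)=w_0$ on $\Gamma_0$ (because $\mathcal B^{(\alpha_1,\dots,\alpha_k)}$ is tangential), so Theorem \ref{thm trace regularity with nonzero B.C.} gives $P(w)\cdot n\in L^2(0,T;L^2(\Gamma_0))$. The commutator identity \eqref{eq:relation between B(P)-P(B)}, applied iteratively, then expresses $\mathcal B^{(\alpha_1,\dots,\alpha_k)}(P(u)\cdot n)$ in terms of $P(w)\cdot n$ plus terms of tangential order $\le k-1$ that are controlled by Proposition \ref{proposition for stress vector} together with the inductive bound $P(u)\cdot n\in L^2(0,T;H^{k-1}(\Gamma_0))$. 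As $(\alpha_1,\dots,\alpha_k)$ ranges over $\{1,2\}^k$, the operators $\mathcal B^{(\alpha_1,\dots,\alpha_k)}$ span all tangential derivatives of order $k$, yielding the claimed membership.

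\emph{Time trace regularity and conclusion.} For $P(u)\cdot n\in H^{k}(0,T;L^2(\Gamma_0))$ I would again use that each $\partial^{j}u/\partial t^{j}$, $j=0,\dots,k$, solves a Navier system of class $k-j$ and hence, by Theorem \ref{thm trace regularity with nonzero B.C.}, has $P(\partial^{j}u/\partial t^{j})\cdot n\in L^2(0,T;L^2(\Gamma_0))$; since $P$ and the trace commute with $\partial_t$ (time-independent coefficients), this gives $\partial^{j}(P(u)\cdot n)/\partial t^{j}\in L^2(0,T;L^2(\Gamma_0))$ for every $j\le k$. The estimate \eqref{eq:high-interp-estimate} then follows by tracking the constants $c(T)$ through the finitely many applications of the component results. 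The main obstacle I anticipate is the commutator bookkeeping in the tangential step: one must verify carefully that $\tilde R(u)$ involves no more than $k+1$ spatial derivatives of $u$ (so that the first step controls it) and that the iterated form of \eqref{eq:relation between B(P)-P(B)} leaves only genuinely lower-order boundary terms, all while checking that the compatibility conditions propagate correctly to $w$ and to the time-differentiated problems.
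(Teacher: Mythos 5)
Your proposal is correct and follows essentially the same route the paper itself sketches for this theorem: repeated time-differentiation of \eqref{eq:Navier} (with the shifted data $u^{(j)}$, $\partial^j g/\partial t^j$, $\partial^j F/\partial t^j$) combined with elliptic recovery of the top spatial regularity as in Lemma \ref{existence of strong solution}, and then the substitution of the order-$k$ tangential operator $\mathcal B^{(\alpha_1,\dots,\alpha_k)}$ for $B^{(\alpha)}$ in the argument of Theorem \ref{th:strong trace regularity} to obtain the boundary regularity of $P(u)\cdot n$. Your explicit inductive formulation and the identification of the commutator bookkeeping as the delicate point are a faithful elaboration of the paper's (unproved) outline.
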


We conclude this section with the following theorem which asserts that the constant ``$c(T)$'' in the estimate \eqref{eq:high-interp-estimate} can be made independent of $T$. The proof exploits and extends the ideas in \cite[pp. 558--560]{raymond2014} for the same equation but with Dirichlet boundary conditions, $k=0,1,2$,  
and $F=0$.  
\begin{theorem}\label{th:constant-indep-T}
Under the same hypotheses of Theorem \ref{th:higher-regularity}, and for $k\ge 1$ assume further that 
\[
\frac{\partial^j F}{\partial t^j}\left.\right|_{t=0}\in H^{k-j-1/2}(\Omega)\quad\text{for }j=0,\dots k-1. 
\]
Then, the solution to \eqref{eq:Navier} satisfies the following estimate with a constant $c>0$ independent of $T$: 
\begin{equation} \label{eq:high-estimate-indep-T}
   \begin{aligned}
        &\sup_{t\in [0,T]}\sum^{k+1}_{j=0}\norm{\frac{\partial^{j} u(t)}{\partial t^j}}_{H^{k+1-j}(\Omega)}
        +\norm{P(u) \cdot n}_{L^{2}(0,T;H^{k}(\Gamma_{0}))}
        +\norm{P(u)\cdot n}_{H^{k} (0,T;L^2(\Gamma_{0}))}
        \\
        &\qquad \le c
        \left(\norm{F}_{L^2(0,T;H^k(\Omega))}+\norm{F}_{H^k(0,T;L^2(\Omega))}+\sum^{k-1}_{j=0}\norm{\frac{\partial^j F}{\partial t^j}\left.\right|_{t=0}}_{L^1(0,T;H^{k-j-1/2}(\Omega))}\right.
        \\
        &\quad\qquad\qquad\left.\phantom{\frac{dx}{dt}} +\norm{u_{0}}_{H^{k+1}(\Omega)}
        +\norm{u_{1}}_{H^{k}(\Omega)}\
        +\norm{g}_{L^2(0,T;H^{k+1}(\Omega))}+\norm{g}_{H^{k+1}(0,T;L^2(\Omega))}\right).
   \end{aligned}
\end{equation}
\end{theorem}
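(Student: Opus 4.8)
The plan is to abandon the transposition route of Lemma~\ref{existence of weak solution in the sense of adjoint-isomorphism}, since it is precisely the duality argument behind that lemma that produced the degenerate factor $c(T)=CT^{-1}\max\{1,T^2\}(1+T)$ of \eqref{eq:c(T)} (which blows up both as $T\to0$ and as $T\to\infty$). For the \emph{strong} solution furnished by Theorem~\ref{th:higher-regularity} I would instead estimate $u$ \emph{directly}, exploiting the special structure of \eqref{eq:Navier}: the bilinear form $B$ is time-independent and coercive by \eqref{eq:B-norm-equivalent}, and the equation contains no zeroth- or first-order-in-time term. This is what allows the energy balance to close \emph{without} Gr\"onwall's inequality, and hence without any exponential factor $e^{cT}$. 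The scheme extends \cite[pp.~558--560]{raymond2014}, where the corresponding bound is obtained for Dirichlet data, $F=0$, and $k\le2$; the two new features to accommodate are the Neumann-type condition on $\Gamma_1$ and the general forcing $F$.

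First I would reduce to homogeneous boundary data. Let $\hat g(\cdot,t)$ solve, for each fixed $t$, the \emph{time-independent} elliptic system $\div P(\hat g)=0$ in $\Omega$, $\hat g=g$ on $\Gamma_0$, $P(\hat g)\cdot n=0$ on $\Gamma_1$ (cf. \eqref{eq:divP0-extension}). Since neither the operator nor $\Omega$ depends on $t$, the elliptic estimates are $T$-independent and commute with $\partial_t$, so $\hat g$ and its time-derivatives are bounded by the corresponding norms of $g$ with constants depending only on $\Omega,\mu,\lambda$. Setting $v:=u-\hat g$, the field $v$ solves \eqref{eq:Navier} with homogeneous Dirichlet datum on $\Gamma_0$, Neumann datum $0$ on $\Gamma_1$, and forcing $F-\partial_t^2\hat g$. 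On $\Gamma_0$ one then splits $P(u)\cdot n=P(v)\cdot n+P(\hat g)\cdot n$, the second piece being controlled $T$-independently by the trace theorem applied to the lift.

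Next I would run the energy estimates. Multiplying the equation for $\partial_t^\ell v$ ($\ell=0,\dots,k$) by $\partial_t^{\ell+1}v$ and using \eqref{eq:B-norm-equivalent}, the energy $\mathcal E_\ell(t):=\tfrac12\norm{\partial_t^{\ell+1}v(t)}_{L^2(\Omega)}^2+\tfrac12B(\partial_t^\ell v(t),\partial_t^\ell v(t))$ obeys $\tfrac{d}{dt}\sqrt{\mathcal E_\ell}\le c\,\norm{\partial_t^\ell(F-\partial_t^2\hat g)}_{L^2(\Omega)}$, whence
\[
\sqrt{\mathcal E_\ell(t)}\le\sqrt{\mathcal E_\ell(0)}+c\,\norm{\partial_t^\ell(F-\partial_t^2\hat g)}_{L^1(0,t;L^2(\Omega))},
\]
a bound whose constant is independent of $T$. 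The compatibility conditions of Theorem~\ref{th:higher-regularity} identify the initial data of $\partial_t^\ell v$ with the fields $u^{(\ell)},u^{(\ell+1)}$, which control $\mathcal E_\ell(0)$; it is exactly here that the extra hypothesis $\partial_t^jF|_{t=0}\in H^{k-j-1/2}(\Omega)$ enters, making $u^{(j)}=\div P(u^{(j-2)})+\partial_t^{j-2}F|_{t=0}$ bounded in the fractional spaces needed for the lifts. The natural $L^1$-in-time control of the forcing is then recast, via the fundamental theorem of calculus in $t$, into the $L^2(0,T;\cdot)$ and initial-trace norms appearing on the right-hand side of \eqref{eq:high-estimate-indep-T}. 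The spatial regularity $\sum_j\norm{\partial_t^ju}_{C([0,T];H^{k+1-j}(\Omega))}$ is recovered by solving, at each fixed $t$, the elliptic system $\div P(\partial_t^ju)=\partial_t^{j+2}u-\partial_t^jF$ with its boundary conditions and invoking elliptic regularity with its $T$-independent constant, iterating downward in $j$.

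Finally, for the trace I would re-run the multiplier identity \eqref{eq:final equality of thm 3.1} together with the tangential-operator construction of Theorem~\ref{th:strong trace regularity} (the operators $\mathcal B^{(\alpha_1,\dots,\alpha_m)}$, cut-off $\eta$, and the perturbed equation \eqref{eq: navier with w}), but now reading off the constants instead of absorbing them into $c(T)$. In the identity the time-boundary terms are bounded by $\sup_t\mathcal E_\ell$, the volume terms by $\sqrt T\,\sup_t\sqrt{\mathcal E_\ell}$ and by the forcing, the $\Gamma_0$-contributions of $g$ and of the lift by the $T$-independent elliptic and trace bounds, and the commutator remainder $\widetilde R(u)$ by the (already $T$-independent) energy of $u$; this mirrors how \eqref{trace regularity with zero B.C.} already carries a $T$-independent constant. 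Collecting the estimates yields \eqref{eq:high-estimate-indep-T}. The main obstacle is to be certain that no link in this chain silently reintroduces $T$-dependence: the two delicate points are the Gr\"onwall-free closure of the energy identity (which rests on the conservative, time-independent structure of the operator) and the $T$-independent control of the reconstructed initial data $u^{(j)}$ and of the associated lifts --- precisely the role played by the additional fractional hypothesis on $\partial_t^jF|_{t=0}$.
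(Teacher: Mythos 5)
Your proposal follows a genuinely different route from the paper, and as written it contains two concrete gaps. First, the reduction to homogeneous boundary data by subtracting the elliptic lift $\hat g$ costs a time derivative of $g$ that the hypotheses do not provide: after setting $v=u-\hat g$ the forcing is $F-\partial_t^2\hat g$, and your top-level energy $\mathcal E_k$ (which you need in order to seed the downward elliptic bootstrap for $\partial_t^ju\in H^{k+1-j}(\Omega)$) requires $\partial_t^kF-\partial_t^{k+2}\hat g\in L^1(0,T;L^2(\Omega))$, i.e.\ essentially $g\in H^{k+2}(0,T;L^2(\Gamma_0))$ (or $\partial_t^kg\in L^1(0,T;H^{3/2}(\Gamma_0))$ if you keep $\mathrm{div}\,P(\hat g)$ in the forcing instead). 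The theorem assumes only $g\in L^2(0,T;H^{k+1}(\Gamma_0))\cap H^{k+1}(0,T;L^2(\Gamma_0))$, which by interpolation yields at best $\partial_t^kg\in L^2(0,T;H^1(\Gamma_0))$. This deficit is precisely why the paper routes nonhomogeneous boundary data through the transposition solutions of Lemma \ref{existence of weak solution in the sense of adjoint-isomorphism}; abandoning that machinery without a substitute puts the stated hypothesis on $g$ out of reach (and keeping the boundary term $\int_{\Gamma_0}\partial_t^{k+1}g\cdot(P(\partial_t^ku)\cdot n)$ in the energy identity instead is circular, since it involves the very trace being estimated). Second, your own bookkeeping of the multiplier identity records the volume contributions as $\sqrt T\,\sup_t\sqrt{\mathcal E_\ell}$; that factor does not vanish by ``reading off constants'' --- it is genuinely present in \eqref{trace regularity with zero B.C.}, and energy conservation alone gives no $T$-independent replacement for $\sqrt T\,\norm{\nabla u}_{L^\infty(0,T;L^2(\Omega))}$. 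So the chain you describe still ends with a $T$-dependent constant.

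For comparison, the paper does not redo any estimate: it keeps the $c(\tau)$ bound of Theorem \ref{th:higher-regularity} intact and applies it on a fixed auxiliary interval $(0,\tau)$ with $\tau>T$. The data are split as $F=\hat F+f$ and $g=\hat g+G$, where $\hat F,\hat g$ are Lions--Magenes lifts on $(0,\infty)$ of the initial time-traces --- this is where the extra hypothesis $\partial_t^jF|_{t=0}\in H^{k-j-1/2}(\Omega)$ actually enters, not in controlling the reconstructed data $u^{(j)}$ as you suggest --- while $f,G$ have vanishing traces at $t=0$ and hence extend by zero backward in time with $T$-independent norms (Theorem \ref{th:continuity-zero-time-trace}). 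The solution is then decomposed as $u=\hat u+\tilde u$ with $\hat u,\tilde u$ solving \eqref{eq:Navier-hat} and \eqref{eq:Navier-tilde} on $(0,\tau)$, and the constant becomes $c(\tau)$ rather than $c(T)$. If you want to salvage a direct-energy proof, you would at minimum need to import this trace-lifting/extension-by-zero decomposition of $g$ (and of $F$) to avoid both the lost time derivative and the $\sqrt T$ from the multiplier identity.
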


\begin{proof}
    The main idea of the proof is to re-write the unique solution $u$ of \eqref{eq:Navier} from Theorem \ref{th:higher-regularity} as $u=\hat u+\tilde u$ where $\hat u$ and $\tilde u$ are solutions to \eqref{eq:Navier} on a time-interval whose length is independent of $T$, and corresponding to a suitable choice of data. We explain this in detail next. 

    Fix $\tau\in (T,\infty)$. Under the stated hypotheses on the data and \cite[Chapter 1, Section 3., Theorem 3.2]{LM-1}, there exist 
    \[\begin{split}
        &\hat F\in L^2(0,\infty;H^k(\Omega))\cap H^k(0,\infty;L^2(\Omega)),
        \\
        &\hat g\in L^2(0,\infty;H^{k+1}(\Gamma_0))\cap H^{k+1}(0,\infty;L^2(\Gamma_0)),
    \end{split}\]
    such that 
    \[\begin{split}
       &\frac{\partial^j \hat F}{\partial t^j}\left.\right|_{t=0}=\frac{\partial^j F}{\partial t^j}\left.\right|_{t=0}\quad\text{for }j=0,\dots k-1,
       \\
       &\frac{\partial^\ell \hat g}{\partial t^\ell}\left.\right|_{t=0}=\frac{\partial^\ell g}{\partial t^\ell}\left.\right|_{t=0}\quad\text{for }\ell=0,\dots k,
    \end{split}\]
    and 
    \[\begin{split}
        &\norm{\hat F}_{L^2(0,\infty;H^k(\Omega))}+\norm{\hat F}_{H^k(0,\infty;L^2(\Omega))}\le c_1\sum^{k-1}_{j=0}\norm{\frac{\partial^j F}{\partial t^j}\left.\right|_{t=0}}_{H^{k-j-1/2}(\Omega)},
        \\
        &\norm{\hat g}_{L^2(0,\infty;H^{k+1}(\Gamma_0))}+\norm{\hat g}_{H^{k+1}(0,\infty;L^2(\Gamma_0))}
        \le c_2\left(\norm{u_0}_{H^{k+1}(\Omega)}+\norm{u_1}_{H^k(\Omega)}\right.
        \\
        &\left. \qquad \qquad\qquad \qquad\qquad \qquad\qquad \qquad\qquad \qquad\quad +\norm{F}_{L^2(0,T;H^k(\Omega))}+\norm{F}_{H^k(0,T;L^2(\Omega))}\right).
    \end{split}\]
    We notice that the functions $f:=F-\hat F\in L^2(0,T;H^k(\Omega))\cap H^k(0,T;L^2(\Omega))$ and $G:=g-\hat g\in L^2(0,T;H^{k+1}(\Gamma_0))\cap H^{k+1}(0,T;L^2(\Gamma_0))$ satisfy 
    \[\begin{split}
       &\frac{\partial^j f}{\partial t^j}\left.\right|_{t=0}=0\quad\text{for }j=0,\dots k-1,
       \\
       &\frac{\partial^\ell G}{\partial t^\ell}\left.\right|_{t=0}=0\quad\text{for }\ell=0,\dots k,
    \end{split}\]
    respectively. We then consider the extensions by zero of $f$ and $G$ for $t<0$, and denote them $\tilde F$ and $\tilde g$, respectively. In particular, they satisfy 
    \[\begin{aligned}
       &\frac{\partial^j \tilde F}{\partial t^j}=\frac{\partial^j f}{\partial t^j}\quad&&\text{for }t\in (0,T)\text{ and each }j=0,\dots k-1,
       \\
       &\frac{\partial^j \tilde F}{\partial t^j}=0\quad&&\text{for }t\le0\text{ and each }j=0,\dots k-1,
       \\
       &\frac{\partial^\ell \tilde g}{\partial t^\ell}=\frac{\partial^\ell G}{\partial t^\ell}\quad&&\text{for }t\in (0,T)\text{ and each }\ell=0,\dots k,
       \\
       &\frac{\partial^\ell \tilde g}{\partial t^\ell}=0\quad&&\text{for }t\le0\text{ and each }\ell=0,\dots k,
    \end{aligned}\]
    and 
    \[\begin{split}
        \norm{\tilde F}_{L^2(-\infty,T;H^k(\Omega))}&+\norm{\hat F}_{H^k(-\infty,T;L^2(\Omega))}
        \\
        &\le c_3\left(\norm{F}_{L^2(0,T;H^k(\Omega))}+\norm{F}_{H^k(0,T;L^2(\Omega))}+\sum^{k-1}_{j=0}\norm{\frac{\partial^j F}{\partial t^j}\left.\right|_{t=0}}_{H^{k-j-1/2}(\Omega)}\right),
        \\
        \norm{\tilde g}_{L^2(-\infty,T;H^{k+1}(\Gamma_0))}&+\norm{\hat g}_{H^{k+1}(-\infty,T;L^2(\Gamma_0))}
        \\
        &\le c_4\left(\norm{u_0}_{H^{k+1}(\Omega)}+\norm{u_1}_{H^k(\Omega)}
        +\norm{F}_{L^2(0,T;H^k(\Omega))}+\norm{F}_{H^k(0,T;L^2(\Omega))}
        \right.
        \\
        &\left. \qquad \qquad\qquad \qquad\qquad \qquad\qquad 
        +\norm{g}_{L^2(0,T;H^{k+1}(\Gamma_0))}+\norm{g}_{H^{k+1}(0,T;L^2(\Gamma_0))}\right).
    \end{split}\]

    Now consider the initial-boundary value problems: 
    \begin{equation}\label{eq:Navier-hat}
    \begin{aligned}
            &\frac{\partial^{2}\hat u}{\partial t^2} =\mathrm{div} P(\hat u) + \hat F  &&\qquad \text{in } \Omega \times (0,\tau), \\
            &\hat u(\cdot,0) =u_{0} &&\qquad \text{in } \Omega ,\\
            &\frac{\partial \hat u}{\partial t}(\cdot,0) = u_{1} &&\qquad  \text{in } \Omega, \\
            &\hat u =\hat g && \qquad \text{on } \Gamma_{0} \times (0,\tau),\\
            & P(\hat u)\cdot n =0 && \qquad \text{on } \Gamma_{1} \times (0,\tau). 
    \end{aligned}
    \end{equation}
    and 
    \begin{equation}\label{eq:Navier-tilde}
    \begin{aligned}
            &\frac{\partial^{2}\tilde u}{\partial t^2} =\mathrm{div} P(\tilde u) + \tilde F(T-t)  &&\qquad \text{in } \Omega \times (0,\tau), \\
            &\tilde u(\cdot,0) =0 &&\qquad \text{in } \Omega ,\\
            &\frac{\partial \tilde u}{\partial t}(\cdot,0) = 0 &&\qquad  \text{in } \Omega, \\
            &\tilde u =\tilde g(T-t)  && \qquad \text{on } \Gamma_{0} \times (0,\tau),\\
            & P(\hat u)\cdot n =0 && \qquad \text{on } \Gamma_{1} \times (0,\tau). 
    \end{aligned}
    \end{equation}
    We then apply Theorem \ref{th:higher-regularity} to $\hat u$ and $\tilde u$, respectively. Finally, we notice that $\tilde u(t)=0$ for $t\in [T,\tau)$, and the original solution $u$ of \eqref{eq:Navier} is indeed given by 
    \[
    u=\hat u+\tilde u \qquad\text{for }t\in [0,T]. 
    \]
    By triangle inequality and \eqref{eq:high-interp-estimate} applied to $\hat u$ and $\tilde u$ (with $T$ replaced by $\tau$), we find an estimate for $u$ whose constant is of the form ``$c(\tau)$''. Since $\tau$ is arbitrary, we then find \eqref{eq:high-estimate-indep-T}. 
\end{proof}

At last, we note that, by interpolation, the above result continue to hold with the Sobolev exponent $m$ replaced by a real Sobolev exponent $s\ge 1$ and with the time derivatives of integer order possibly replaced by time-derivatives of fractional order. 

\begin{appendices}

\section{Useful equalities}\label{sec:equalities}

In this appendix, we provide the proofs of some useful equalities which are used throughout the paper. Without loss of generality, we assume that the vector fields are written in a Cartesian coordinate system with associated orthonormal basis $\{\e_1,\e_2,\e_3\}$. The symbol ``$\otimes$'' denotes the dyadic product of vectors. 

\begin{lemma}
Let $\varphi=\varphi_i\e_i \in H^1(\Omega)$ be such that $\varphi=0$ on $\Gamma_0$. Then the following equalities hold on $\Gamma_0$
\begin{align}\label{eq:nabla-u-normal}
\nabla \varphi &=\frac{\partial \varphi_i}{\partial x_k}n_kn_j \e_i\otimes\e_j,
\\
\label{eq:nabla-u-dot-n-norm}
|\nabla \varphi|&=|\nabla \varphi\cdot n|,
\\
\label{eq:div^2}
\div(\varphi)\,n\cdot\nabla \varphi\cdot n&=(\div \varphi)^2,
\\
\label{eq:sym-nabla-u-norm}
2|e(\varphi)|^2&=|\nabla \varphi|^2+(\div(\varphi))^2,
\\
\label{eq:sym-nabla-u-dot-n-norm}
4(e(\varphi)\cdot n)\cdot (e(\varphi)\cdot n)&=|\nabla \varphi|^{2} + 3 (\mathrm{div}\varphi)^2.
\end{align}
\end{lemma}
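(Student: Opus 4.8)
The plan is to reduce all five identities to the single structural fact \eqref{eq:nabla-u-normal}, which encodes that, on $\Gamma_0$, the gradient of each scalar component $\varphi_i$ can only point in the normal direction. Once this rank-one structure of $\nabla\varphi$ is established, the four identities \eqref{eq:nabla-u-dot-n-norm}--\eqref{eq:sym-nabla-u-dot-n-norm} reduce to elementary tensor algebra using only $|n|=1$.

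First I would establish \eqref{eq:nabla-u-normal}. Since $\varphi=0$ on $\Gamma_0$ and $\Gamma_0$ is smooth, every derivative of $\varphi_i$ in a direction tangent to $\Gamma_0$ must vanish: for any tangent vector $\tau$ (so that $\tau\cdot n=0$) one has $\tau_k\,\partial\varphi_i/\partial x_k=0$ on $\Gamma_0$. Hence the vector $\nabla\varphi_i=(\partial\varphi_i/\partial x_k)_k$ is orthogonal to the whole tangent space of $\Gamma_0$, and is therefore parallel to $n$; this gives $\partial\varphi_i/\partial x_j=(\partial\varphi_i/\partial x_k\,n_k)\,n_j$, which is precisely \eqref{eq:nabla-u-normal}. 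I would make this rigorous by passing to the boundary coordinates \eqref{eq:parametrization-around-boundary}--\eqref{eq:parametrization-around-boundary2}, where the condition that $\varphi$ vanishes on $\{y_d=0\}$ renders the vanishing of the tangential derivatives transparent. The identity is first proved for smooth $\varphi$ and then extended to the stated class by the usual trace/density argument, which is also the way these equalities are invoked in the body of the paper.

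Introducing the shorthand $a_i:=(\partial\varphi_i/\partial x_k)\,n_k$, so that \eqref{eq:nabla-u-normal} reads $\nabla\varphi=a\otimes n$, the remaining identities follow by direct computation. Since $|n|=1$, we obtain $|\nabla\varphi|^2=|a|^2$ and $\nabla\varphi\cdot n=a$, which yields \eqref{eq:nabla-u-dot-n-norm}; moreover $\div\varphi=\mathrm{tr}(\nabla\varphi)=a\cdot n$ and $n\cdot\nabla\varphi\cdot n=a\cdot n$, so that both equal $\div\varphi$ and \eqref{eq:div^2} follows upon multiplication by $\div\varphi$. For the strain tensor one has $e(\varphi)=\tfrac12(a\otimes n+n\otimes a)$, whence a short expansion gives $2|e(\varphi)|^2=|a|^2+(a\cdot n)^2=|\nabla\varphi|^2+(\div\varphi)^2$, which is \eqref{eq:sym-nabla-u-norm}, while $e(\varphi)\cdot n=\tfrac12\bigl(a+(a\cdot n)\,n\bigr)$ leads, upon squaring and again using $|n|=1$, to $4\,(e(\varphi)\cdot n)\cdot(e(\varphi)\cdot n)=|a|^2+3(a\cdot n)^2=|\nabla\varphi|^2+3(\div\varphi)^2$, i.e. \eqref{eq:sym-nabla-u-dot-n-norm}.

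The only genuine content, and thus the main obstacle, is the justification of \eqref{eq:nabla-u-normal}: one must argue carefully that the boundary condition $\varphi|_{\Gamma_0}=0$ annihilates precisely the tangential part of each $\nabla\varphi_i$, and clarify the regularity under which the trace of $\nabla\varphi$ on $\Gamma_0$ is meaningful. All four subsequent identities are then purely algebraic and carry no further difficulty.
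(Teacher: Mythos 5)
Your proposal is correct and follows essentially the same route as the paper: establish \eqref{eq:nabla-u-normal} from the vanishing of tangential derivatives of each component $\varphi_i$ on $\Gamma_0$, and then derive \eqref{eq:nabla-u-dot-n-norm}--\eqref{eq:sym-nabla-u-dot-n-norm} by direct tensor algebra from the resulting rank-one structure of $\nabla\varphi$ (your shorthand $\nabla\varphi=a\otimes n$ is just a cleaner packaging of the paper's index computations). All the algebraic identities check out, so no gap remains beyond the routine density/trace justification you already flag.
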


\begin{proof}[Proof of \eqref{eq:nabla-u-normal}.] 
Using the component-basis representation of the gradient of a tensor field, we have that 
\[
\nabla \varphi = \frac{\partial \varphi_i}{\partial x_j}\e_i\otimes \e_j
=\e_i\otimes \left(\frac{\partial \varphi_i}{\partial x_j}\e_j\right)=\e_i\otimes \nabla \varphi_i
\]
Note that $\varphi=0$ on $\Gamma_{0}$ is equivalent to $\varphi_i=0$ on $\Gamma_{0}$ for all $i=1,2,3$. Thus, $\nabla \varphi_i=(\nabla\varphi_i\cdot n)n$ on $\Gamma_{0}$ for all $i=1,2,3$. As a consequence, 
\[
\nabla\varphi=\e_i\otimes [(\nabla \varphi_i\cdot n)n]=(\nabla \varphi_i\cdot n)\e_i\otimes n
=\frac{\partial \varphi_i}{\partial x_k}n_k\e_i\otimes(n_j\e_j)
=\frac{\partial \varphi_i}{\partial x_k}n_kn_j\e_i\otimes \e_j. 
\]
\end{proof}

\begin{proof}[Proof of \eqref{eq:nabla-u-dot-n-norm}.]
Using the definition of norm for tensor fields and \eqref{eq:nabla-u-normal}, we have that 
\[
|\nabla \varphi|^2=\nabla \varphi\;\colon\nabla \varphi=\frac{\partial \varphi_i}{\partial x_k}n_kn_j\frac{\partial \varphi_i}{\partial x_\ell}n_\ell n_j
=\frac{\partial \varphi_i}{\partial x_k}n_k\frac{\partial \varphi_i}{\partial x_\ell}n_\ell
=(\nabla \varphi_i\cdot n)(\nabla \varphi_i\cdot n).
\]
On the other side, 
\[
|\nabla\varphi\cdot n|^2=|[(\nabla \varphi_i\cdot n)\e_i\otimes n]\cdot n|^2=|(\nabla \varphi_i\cdot n)\e_i|^2=(\nabla \varphi_i\cdot n)(\nabla \varphi_i\cdot n).
\]
\end{proof}

\begin{proof}[Proof of \eqref{eq:div^2}.]
Using again \eqref{eq:nabla-u-normal}, we obtain that 
\begin{equation*} \label{Proof of A3}
    \begin{aligned}
        \div(\varphi)\,n\cdot\nabla \varphi\cdot n&=\div(\varphi)\left(n_{i} \frac{\partial \varphi_{i}}{\partial x_{j}}n_{j}\right)
        =\div(\varphi)\frac{\partial \varphi_{i}}{\partial x_{j}}n_{i}n_{j}
        =\div(\varphi)\;\mathrm{tr}(\nabla \varphi)
        =(\div (\varphi))^{2}.
      \end{aligned}
  \end{equation*}
\end{proof}

\begin{proof}[Proof of \eqref{eq:sym-nabla-u-norm}.]
As a consequence of \eqref{eq:nabla-u-normal}, since $\{\e_i\otimes\e_j:\; i,j=1,2,3\}$ forms a basis, we have that 
\[
\frac{\partial \varphi_i}{\partial x_j}
=\frac{\partial \varphi_i}{\partial x_k}n_kn_j    \qquad\text{ for all }i,j=1,2,3. 
\]
Using this fact, we get that 
\begin{equation*} \label{proof of A4}
    \begin{aligned}
        4(e(\varphi) \colon e(\varphi))&=(\nabla \varphi +(\nabla \varphi)^{T}) \colon (\nabla \varphi +(\nabla \varphi)^{T})
        \\
        &=\frac{\partial \varphi_{i}}{\partial x_{j}}\frac{\partial \varphi_{i}}{\partial x_{j}}+\frac{\partial \varphi_{j}}{\partial x_{i}}\frac{\partial \varphi_{j}}{\partial x_{i}}
        +2\left(\frac{\partial \varphi_i}{\partial x_k}n_kn_i\right)\left(\frac{\partial \varphi_j}{\partial x_\ell}n_\ell n_j\right)     
        \\
        &=\frac{\partial \varphi_{i}}{\partial x_{j}}\frac{\partial \varphi_{i}}{\partial x_{j}}+\frac{\partial \varphi_{j}}{\partial x_{i}}\frac{\partial \varphi_{j}}{\partial x_{i}}+2
          \left(\mathrm{tr}(\nabla \varphi)\right)^2        
        \\
        &=2|\nabla \varphi^{2}|+2 (\div\varphi)^{2}.
    \end{aligned}
\end{equation*}
\end{proof}

\begin{proof}[Proof of \eqref{eq:sym-nabla-u-dot-n-norm}.]
Using the definition $e(\varphi)=\frac 12 (\nabla \varphi+(\nabla \varphi)^T)$ and property \eqref{eq:nabla-u-dot-n-norm}, we find that 
\begin{equation} \label{proof of A5}
    \begin{aligned}
        4(e(\varphi)\cdot n)\cdot (e(\varphi)\cdot n)&=(\nabla \varphi \cdot n+(\nabla \varphi)^{T}\cdot n)
           \cdot(\nabla \varphi \cdot n+(\nabla \varphi)^{T}\cdot n)
        \\
        &=|\nabla \varphi|^{2}+2((\nabla \varphi)^T\cdot n)\cdot (\nabla \varphi \cdot n)+|(\nabla \varphi)^{T} \cdot n|^2.
    \end{aligned}
\end{equation}
Let us look at the third term on the right hand side of \eqref{proof of A5}, we then use \eqref{eq:nabla-u-normal} to get 
\begin{equation} \label{eq:3rd term of A5}
    \begin{aligned}
        |(\nabla \varphi)^{T} \cdot n|^2 &=\left(\frac{\partial \varphi_{i}}{\partial x_{l}}n_{l}n_{i}\right)\left(\frac{\partial \varphi_{k}}{\partial x_{m}}n_{m}n_{k}\right)
        =(\div{\varphi})^{2}.
    \end{aligned}
\end{equation}
For what concerns the middle term on the right-hand side of \eqref{proof of A5}, using again \eqref{eq:nabla-u-normal}, we observe that 
\begin{equation} \label{eq:2nd term of A5}
       2((\nabla \varphi)^T\cdot n)\cdot(\nabla \varphi \cdot n)
       =2\left(\frac{\partial \varphi_j}{\partial x_k}n_kn_j\right)
        \left(\frac{\partial\varphi_i}{\partial x_\ell}n_\ell n_i\right)
       =2(\mathrm{div}\varphi)^2.
\end{equation}
Replacing \eqref{eq:3rd term of A5} and \eqref{eq:2nd term of A5} in \eqref{proof of A5}, \eqref{eq:sym-nabla-u-dot-n-norm} follows immediately. 
\end{proof}

\section{Regularity properties of Hilbert spaced-valued functions}\label{sec:equalities2}
Let $H$ and $V$ be two Hilbert spaces with $V$ dense in $H$ and $V\hookrightarrow H$. For $-\infty\le a<b\le +\infty$ and $m\in \N\setminus\{0\}$, we define 
\[
Z^m(a,b):=\left\{f\in L^2(a,b;V):\; \frac{d^m f}{d t^m}\in L^2(a,b;H)\right\},
\]
with the time-derivatives taken in the sense of distributions (see \cite[Definition 8.49]{leoniSobolev}). The space $Z^m(a,b)$ is endowed with the norm 
\[
\norm{f}_{Z^m(a,b)}:=\left(\norm{f}^2_{L^2(a,b;V)}+\norm{\frac{d^m f}{d t^m}}^2_{L^2(a,b;H)}\right)^{1/2}. 
\]
The following theorem has been proved in \cite{LM-1} and it also known as {\em Lions–Magenes lemma}. 
\begin{theorem}(\cite[Chapter 1, Theorem 3.1.]{LM-1})\label{th:embedding-R}
If $f\in Z^m(-\infty,\infty)$, then 
\[
\frac{d^j f}{dt^j}\in C_b\left(\R;[H,V]_{1-\frac{j+1/2}{m}}\right)
\qquad\text{for each $j=0,1,\dots,m-1$,}
\]
and there exists a constant $k_1>0$ such that  
\begin{equation}\label{eq:continuity-R}
\sup_{t\in \R}\norm{\frac{d^jf(t)}{dt^j}}_{[H,V]_{1-\frac{2j+1}{2m}}}\le k_1\norm{f}_{Z^m(-\infty,\infty)}. 
\end{equation}
\end{theorem}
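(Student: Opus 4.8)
The plan is to establish the statement by Fourier transform in the time variable together with the spectral representation of the interpolation spaces. Since $V\hookrightarrow H$ densely, I would fix a positive self-adjoint operator $\Lambda$ on $H$ with $D(\Lambda)=V$ and $\norm{a}_V^2\simeq\norm{a}_H^2+\norm{\Lambda a}_H^2$, so that $[H,V]_s=D(\Lambda^s)$ with $\norm{a}_{[H,V]_s}^2\simeq\norm{(1+\Lambda^2)^{s/2}a}_H^2$. By the spectral theorem I may realize $\Lambda$ as multiplication by a parameter $\lambda\ge 0$ on some $L^2(\mathcal M,d\rho)$, so that vectors become scalar fields $a(\lambda)$ and $\norm{a}_{[H,V]_s}^2\simeq\int(1+\lambda^2)^s|a(\lambda)|^2\,d\rho$. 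Denoting by $\hat f(\tau,\lambda)$ the time-Fourier transform of $f$, Plancherel's theorem for Hilbert-space-valued functions recasts the two defining conditions of $Z^m(-\infty,\infty)$ as the finiteness of
\[
N^2:=\int_\R\int_{\mathcal M}\bigl(1+\lambda^2+\tau^{2m}\bigr)\,|\hat f(\tau,\lambda)|^2\,d\rho(\lambda)\,d\tau,
\]
with $N\simeq\norm{f}_{Z^m(-\infty,\infty)}$.

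Set $\theta_j:=(2j+1)/(2m)$, so the target space is $[H,V]_{1-\theta_j}$. Because the $Z^m$-norm is translation invariant, it suffices to bound the trace at $t=0$ and then take the supremum over translates. Using $\widehat{f^{(j)}}(\tau)=(i\tau)^j\hat f(\tau)$ and inverting the transform at $t=0$, I would estimate, for each fixed $\lambda$, the scalar $f^{(j)}(0,\lambda)$ by Cauchy--Schwarz in $\tau$ against the weight $1+\lambda^2+\tau^{2m}$:
\[
|f^{(j)}(0,\lambda)|^2\le C\Bigl(\int_\R\frac{\tau^{2j}}{1+\lambda^2+\tau^{2m}}\,d\tau\Bigr)\int_\R\bigl(1+\lambda^2+\tau^{2m}\bigr)|\hat f(\tau,\lambda)|^2\,d\tau.
\]
The rescaling $\tau=(1+\lambda^2)^{1/(2m)}\sigma$ evaluates the first factor as $C_j\,(1+\lambda^2)^{\theta_j-1}$, where $C_j=\int_\R\sigma^{2j}(1+\sigma^{2m})^{-1}\,d\sigma<\infty$ precisely because $j\le m-1$. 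Multiplying by $(1+\lambda^2)^{1-\theta_j}$ the powers of $(1+\lambda^2)$ cancel exactly; integrating in $d\rho(\lambda)$ then yields $\norm{f^{(j)}(0)}_{[H,V]_{1-\theta_j}}^2\le C\,N^2$, which is \eqref{eq:continuity-R} after taking the supremum over translates. The continuity $f^{(j)}\in C_b(\R;[H,V]_{1-\theta_j})$ would then follow by approximating $f$ in $Z^m(-\infty,\infty)$ by smooth functions with values in $V$ and compact support in time---for which the conclusion is elementary---and passing to the limit in the just-obtained sup-norm bound.

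The delicate point, and the reason the result does not follow from a pointwise-in-$\tau$ interpolation inequality, is the endpoint nature of the estimate. Had I first bounded $\norm{\Lambda^{1-\theta_j}\hat f(\tau)}_H\le\norm{\hat f(\tau)}_V^{1-\theta_j}\norm{\hat f(\tau)}_H^{\theta_j}$ and only afterwards integrated in $\tau$, the leftover weight $|\tau|^{\,j-m\theta_j}=|\tau|^{-1/2}$ would appear, which is not square-integrable on $\R$, so the naive splitting fails at the borderline. The essential idea is instead to retain the coupling between the time-frequency $\tau$ and the spectral parameter $\lambda$ and to exploit the exact cancellation furnished by $\int_\R\tau^{2j}(1+\lambda^2+\tau^{2m})^{-1}\,d\tau\simeq(1+\lambda^2)^{\theta_j-1}$. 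Checking the convergence of this integral---equivalently, the constraint $j\le m-1$---is what both restricts the conclusion to the intermediate derivatives $j=0,\dots,m-1$ and pins down the interpolation exponent $1-\theta_j$; this is the step I expect to demand the most care.
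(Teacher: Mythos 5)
Your argument is correct and is essentially the classical Lions--Magenes proof of this theorem (spectral realization of $V\hookrightarrow H$ via a self-adjoint operator $\Lambda$, Fourier transform in time, Cauchy--Schwarz against the weight $1+\lambda^2+\tau^{2m}$, and the rescaling that evaluates $\int_\R\tau^{2j}(1+\lambda^2+\tau^{2m})^{-1}\,d\tau\simeq(1+\lambda^2)^{\theta_j-1}$, convergent exactly for $j\le m-1$). The paper itself gives no proof of this statement and simply cites \cite[Chapter 1, Theorem 3.1]{LM-1}, whose proof is the one you have reproduced, so there is nothing to reconcile.
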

Objective of this section is to extend (in an appropriate way) the previous results to Hilbert spaced-valued functions in $Z^m(0,\tau)$ with $\tau\in (0,\infty]$.  
In particular, we would like to obtain estimates with constants independent of $\tau$. 

By \cite[Chapter 1, Theorem 2.2.]{LM-1}, for $f\in Z^m(0,\infty)$, there exists an {\em extension
by reflection} $\hat f\in Z^m(-\infty,\infty)$ such that $\hat f=f$ a.e. on $(0,\infty)$. In addition, by \cite[Chapter 1, equation (2.24)]{LM-1}, there exists a constant $k_2>0$ such that 
\[
\norm{\hat f}_{Z^m(-\infty,\infty)}\le k_2\norm{f}_{Z^m(0,\infty)}. 
\]
From the previous theorem applied to the extension $\hat f$, we obtain the following result. 

\begin{theorem}\label{th:continuity-zero-infty}
If $f\in Z^m(0,\infty)$, then 
\[
\frac{d^j f}{dt^j}\in C_b\left([0,\infty);[H,V]_{1-\frac{2j+1}{2m}}\right)
\qquad\text{for each $j=0,1,\dots,m-1$,}
\]
and there exists a constant $k_3>0$ such that  
\begin{equation}\label{eq:continuity-zero-infty}
\sup_{t\in [0,\infty)}\norm{\frac{d^jf(t)}{dt^j}}_{[H,V]_{1-\frac{2j+1}{2m}}}\le k_3\norm{f}_{Z^m(0,\infty)}. 
\end{equation}
\end{theorem}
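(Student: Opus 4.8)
The plan is to deduce the statement directly from the Lions--Magenes lemma (Theorem \ref{th:embedding-R}) on the whole line, combined with the extension by reflection recalled immediately above. The argument is purely a transfer of the full-line result to the half-line through the extension operator, so no genuinely new analysis is needed; the only point requiring care is the correct identification of the continuous representative on $[0,\infty)$, including its value at the endpoint $t=0$.

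First, given $f\in Z^m(0,\infty)$, I would invoke the extension by reflection $\hat f\in Z^m(-\infty,\infty)$ supplied by \cite[Chapter 1, Theorem 2.2.]{LM-1}, which satisfies $\hat f=f$ a.e.\ on $(0,\infty)$ together with the norm bound $\norm{\hat f}_{Z^m(-\infty,\infty)}\le k_2\norm{f}_{Z^m(0,\infty)}$. I would then apply Theorem \ref{th:embedding-R} to $\hat f$, obtaining, for each $j=0,1,\dots,m-1$, that $\frac{d^j \hat f}{dt^j}\in C_b\!\left(\R;[H,V]_{1-\frac{2j+1}{2m}}\right)$ together with the bound \eqref{eq:continuity-R} applied to $\hat f$.

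The key step is the transfer to $f$ itself. Since $\hat f=f$ a.e.\ on $(0,\infty)$, their distributional derivatives coincide on $(0,\infty)$, and hence the restriction of the continuous function $\frac{d^j \hat f}{dt^j}$ to $[0,\infty)$ furnishes the desired continuous representative of $\frac{d^j f}{dt^j}$; in particular it is bounded and assigns a well-defined one-sided value at $t=0$. This yields the claimed membership in $C_b\!\left([0,\infty);[H,V]_{1-\frac{2j+1}{2m}}\right)$.

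Finally, the estimate \eqref{eq:continuity-zero-infty} follows by chaining the two bounds: for each $j$,
\[
\sup_{t\in[0,\infty)}\norm{\frac{d^jf(t)}{dt^j}}_{[H,V]_{1-\frac{2j+1}{2m}}}
\le \sup_{t\in\R}\norm{\frac{d^j\hat f(t)}{dt^j}}_{[H,V]_{1-\frac{2j+1}{2m}}}
\le k_1\norm{\hat f}_{Z^m(-\infty,\infty)}
\le k_1 k_2\norm{f}_{Z^m(0,\infty)},
\]
so one takes $k_3:=k_1 k_2$. Because neither the full-line constant $k_1$ from Theorem \ref{th:embedding-R} nor the reflection constant $k_2$ depends on any interval length, this choice is consistent with the overarching goal of constants independent of the time horizon. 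The closest thing to an obstacle here is the endpoint identification of representatives at $t=0$, but this is harmless precisely because $\frac{d^j \hat f}{dt^j}$ is continuous across $0$ by the full-line result.
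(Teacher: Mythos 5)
Your argument is correct and coincides with the paper's own reasoning: the paper likewise obtains this theorem by applying the reflection extension of \cite[Chapter 1, Theorem 2.2.]{LM-1} with the norm bound $\norm{\hat f}_{Z^m(-\infty,\infty)}\le k_2\norm{f}_{Z^m(0,\infty)}$ and then invoking Theorem \ref{th:embedding-R} on the extension, yielding $k_3=k_1k_2$. Your additional remarks on identifying the continuous representative at $t=0$ are a harmless elaboration of the same step.
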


Next, we consider the space 
\[
{_0}Z^m(0,\tau):=\left\{f\in Z^m(0,\tau):\; 
\frac{d^j f}{d t^j}(0)=0\in H\text{ for each }j=0,1,\dots,m-1\right\}. 
\]

\begin{theorem}\label{th:continuity-zero-time-trace}
If $f\in {_0}Z^m(0,\tau)$, then 
\[
\frac{d^j f}{dt^j}\in C\left([0,\tau];[H,V]_{1-\frac{j+1/2}{m}}\right)
\qquad\text{for each $j=0,1,\dots,m-1$,}
\]
and there exists a constant $k_4>0$ (independent of $\tau$) such that  
\begin{equation}\label{eq:continuity-time-L2-Hm-zero-time-trace}
\sup_{t\in [0,\tau]}\norm{\frac{d^jf(t)}{dt^j}}_{[H,V]_{1-\frac{2j+1}{2m}}}\le k_4\norm{f}_{Z^m(0,\tau)}. 
\end{equation}
\end{theorem}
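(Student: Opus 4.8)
The plan is to reduce the claim to the half-line statement already established in Theorem~\ref{th:continuity-zero-infty}, exploiting the vanishing initial traces to make an extension by zero admissible and a time reversal to pass from a left half-line to $(0,\infty)$. The decisive feature is that both operations preserve the $Z^m$-norm exactly, so the resulting constant inherits the $\tau$-independence of the constant $k_3$ of Theorem~\ref{th:continuity-zero-infty}. I first recall that for $f\in Z^m(0,\tau)$ the traces $\frac{d^jf}{dt^j}(0)\in H$ ($j=0,\dots,m-1$) are well defined --- this is exactly what gives meaning to the space ${_0}Z^m(0,\tau)$ --- so that the hypothesis $f\in{_0}Z^m(0,\tau)$ says precisely that all of them vanish.

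I treat the case $\tau<\infty$ in detail (when $\tau=\infty$ one extends $f$ by zero to $(-\infty,0)$ and applies Theorem~\ref{th:embedding-R} directly). Set $g(s):=f(\tau-s)$ for $s\in(0,\tau)$. The substitution $s=\tau-t$ is an isometry on each summand of the norm, since $\frac{d^mg}{ds^m}(s)=(-1)^m\frac{d^mf}{dt^m}(\tau-s)$ yields $\norm{g}_{L^2(0,\tau;V)}=\norm{f}_{L^2(0,\tau;V)}$ and $\norm{\frac{d^mg}{ds^m}}_{L^2(0,\tau;H)}=\norm{\frac{d^mf}{dt^m}}_{L^2(0,\tau;H)}$. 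Moreover the vanishing traces of $f$ at $t=0$ become vanishing traces of $g$ at the right endpoint, namely $\frac{d^jg}{ds^j}(\tau)=0$ for $j=0,\dots,m-1$. I then extend $g$ by zero to $(\tau,\infty)$.

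The step I expect to be the main obstacle is verifying that this zero-extension genuinely belongs to $Z^m(0,\infty)$, i.e.\ that differentiating it $m$ times in the sense of distributions produces no boundary contribution at $s=\tau$. This is exactly where the vanishing traces are needed: because $g$ together with its first $m-1$ derivatives matches continuously (both sides being $0$) across $s=\tau$, repeated integration by parts against a test function leaves no jump terms, and the weak $m$-th derivative of the extension equals the extension by zero of $\frac{d^mg}{ds^m}$. Hence the extended $g$ lies in $Z^m(0,\infty)$ and, the zero piece adding nothing to either factor, $\norm{g}_{Z^m(0,\infty)}=\norm{f}_{Z^m(0,\tau)}$.

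It then remains to apply Theorem~\ref{th:continuity-zero-infty} to $g$, which furnishes $\frac{d^jg}{ds^j}\in C_b([0,\infty);[H,V]_{1-\frac{2j+1}{2m}})$ and the bound $\sup_{s\ge0}\norm{\frac{d^jg}{ds^j}(s)}_{[H,V]_{1-\frac{2j+1}{2m}}}\le k_3\norm{g}_{Z^m(0,\infty)}$ with $k_3$ independent of $\tau$. Undoing the substitution (again an isometry, up to the harmless sign $(-1)^j$ on the $j$-th derivative) transports the continuity back to $\frac{d^jf}{dt^j}$ on $[0,\tau]$, and restricting the supremum to $s\in[0,\tau]$, where $g(\tau-\cdot)$ coincides with $f$, gives the estimate \eqref{eq:continuity-time-L2-Hm-zero-time-trace} with $k_4=k_3$, independent of $\tau$ as required.
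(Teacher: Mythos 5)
Your proof is correct, and it takes a genuinely different route from the paper's. The paper also extends by zero across the endpoint where the traces vanish (for it, $t=0$), but it then handles the free endpoint $t=\tau$ by an explicit compactly supported higher-order reflection: for $m\ge 2$ it solves the linear system $\sum_{k=1}^m(-1)^jk^j\alpha_k=1$, $j=0,\dots,m-1$, and glues the combination $\sum_k\alpha_k f((k+1)\tau-kt)$ on successive subintervals of $(\tau,2\tau)$ so that the extension lies in $Z^m(-\infty,\infty)$ with norm controlled independently of $\tau$; it then invokes the whole-line Theorem~\ref{th:embedding-R} directly. You instead reverse time so that the vanishing traces sit at the right endpoint, extend by zero there (your integration-by-parts justification that no distributional jump terms arise at $s=\tau$ is exactly the right point to check, and it is valid since the intermediate derivatives of $g$ lie in $L^2(0,\tau;H)$ and are continuous into $H$ up to the endpoint), and then delegate the free endpoint $s=0$ to the already-established half-line result, Theorem~\ref{th:continuity-zero-infty}. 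What your approach buys is economy and uniformity: you avoid the Vandermonde-type construction entirely and treat all $m\ge 1$ at once, with the $\tau$-independence of $k_4$ transparently inherited from $k_3$ because time reversal and zero extension are exact isometries for the $Z^m$-norm. What the paper's explicit extension \eqref{eq:extension-zero-t0} buys is a reusable construction (it is invoked again before Theorem~\ref{th:intermediate-derivative}), which your argument does not provide. Two cosmetic remarks: in your final estimate the supremum should be taken of $\tilde g$, the zero extension, rather than $g$ itself, though the two coincide on $[0,\tau]$ and have equal norms, so nothing is lost; and your parenthetical treatment of $\tau=\infty$ (extend by zero to $(-\infty,0)$ and apply Theorem~\ref{th:embedding-R}) is also fine.
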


\begin{proof}
    In the case $m=1$, the same result can be deduced from \cite[Theorem 4.2]{meyries}. We present here a simpler proof. Let $f\in {_0}Z^1(0,\tau)$ and consider the extension 
    \[
    \tilde f:\;t\in (-\infty,\infty)\mapsto\hat f(t):=
    \left\{\begin{aligned}
        &0\qquad&&\text{for }t\le 0,
        \\
        &f(t)\qquad&&\text{for }t\in(0,\tau),
        \\
        &f(2\tau-t)\qquad&&\text{for }t\in(\tau,2\tau),
        \\
        &0\qquad&&\text{for }t\ge 2\tau. 
    \end{aligned}\right.
    \]
    We notice that $\tilde f\in Z^1(-\infty,\infty)$ and $\norm{\tilde f}_{Z^1(-\infty,\infty)}=\norm{f}_{Z^1(0,\tau)}$. We then apply Theorem \ref{th:embedding-R}, to find that 
    $f\in C\left([0,\tau];[H,V]_{\frac 12}\right)$ and
    \[
    \sup_{t\in [0,\tau]}\norm{f(t)}_{[H,V]_{\frac 12}}\leq \sup_{t\in \R}\norm{\hat f(t)}_{[H,V]_{\frac 12}}\le k_1
    \norm{f}_{Z^1(0,\tau)}. 
    \]
    Now consider $m\ge 2$. Let $f\in {_0}Z^m(0,\tau)$ and let us build a suitable extension $\hat f$ of $f$ in $Z^m(-\infty,\infty)$. To this aim, let us consider the $m$-tuple of real numbers $(\alpha_1,\alpha_2,\dots,\alpha_m)$ satisfying the following non-homogeneous linear system 
    \[
    \sum^m_{k=1}(-1)^jk^j\alpha_k=1\qquad\text{for }j=0,\dots,m-1.
    \]
    One can easily verify that the above linear system admits in fact a unique solution $(\alpha_1,\alpha_2,\dots,\alpha_m)\in \R^m$. Then, consider the extension $\tilde f:\; t\in \R\mapsto \hat f(t)\in H$ defined by 
    \begin{equation}\label{eq:extension-zero-t0}
    \tilde f(t):=\left\{\begin{aligned}
            &0 \qquad &&\text{for }t\le0,
            \\
            &f(t)\qquad &&\text{for }t\in(0,\tau),
            \\
            &\sum^m_{k=1}\alpha_k f((k+1)\tau-kt)\qquad &&\text{for }t\in\left(\tau,\frac{m+1}{m}\tau\right),
            \\
            &\sum^{m-\ell-1}_{k=1}\alpha_k f((k+1)\tau-kt)\qquad &&\text{for }t\in\left(\frac{m+1-\ell}{m-\ell}\tau,\frac{m-\ell}{m-\ell-1}\tau\right)
            \\
            &\qquad &&\qquad\text{and }\ell=0,\dots,m-2,
            \\
            &0\qquad &&\text{for }t\ge 2\tau.
        \end{aligned}\right.
    \end{equation}
    We notice that $\tilde f\in Z^m(-\infty,\infty)$ and $\norm{\tilde f}_{Z^m(-\infty,\infty)}=\norm{f}_{Z^m(0,\tau)}$. The claim then follows again from Theorem \ref{th:embedding-R}. 
\end{proof}

We conclude this appendix with the following important results. 
\begin{theorem}\label{th:continuity-time-L2-Hm}
If $f\in Z^m(0,\tau)$ satisfies that 
\begin{equation}\label{eq:trace-zero-4lift}
\frac{d^jf(0)}{dt^j}\in [H,V]_{1-\frac{2j+1}{2m}}\qquad\text{for each }j=0,1,\dots,m-1,
\end{equation}
then 
\begin{equation}\label{eq:continuous-derivative-interpolation}
\frac{d^j f}{dt^j}\in C\left([0,\tau];[H,V]_{1-\frac{2j+1}{2m}}\right)
\qquad\text{for each $j=0,1,\dots,m-1$,}
\end{equation}
and there exists a constant $k_5>0$ (independent of $\tau$) such that  
\begin{equation}\label{eq:continuity-time-L2-Hm}
\sup_{t\in [0,\tau]}\norm{\frac{d^jf(t)}{dt^j}}_{[H,V]_{1-\frac{2j+1}{2m}}}\le k_5\left(\norm{f}_{Z^m(0,\tau)}+\sum^{m-1}_{j=0}\norm{\frac{d^jf(0)}{dt^j}}_{[H,V]_{1-\frac{2j+1}{2m}}}\right). 
\end{equation}
\end{theorem}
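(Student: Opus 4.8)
The plan is to reduce the statement to the zero-initial-trace case already settled in Theorem \ref{th:continuity-zero-time-trace}, by peeling off a lift that carries the prescribed initial data. Write $a_j:=\frac{d^jf(0)}{dt^j}\in [H,V]_{1-\frac{2j+1}{2m}}$ for $j=0,1,\dots,m-1$; these belong to the correct interpolation spaces precisely by the hypothesis \eqref{eq:trace-zero-4lift}.

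The first and decisive step is to construct a function $g\in Z^m(0,\infty)$ whose first $m$ time-traces at the origin coincide with the data, that is $\frac{d^jg(0)}{dt^j}=a_j$ for $j=0,\dots,m-1$, together with the bound
\[
\norm{g}_{Z^m(0,\infty)}\le C\sum_{j=0}^{m-1}\norm{a_j}_{[H,V]_{1-\frac{2j+1}{2m}}}.
\]
This is exactly the surjectivity, with a continuous linear right inverse, of the trace operator in the Lions--Magenes framework, and I would invoke \cite[Chapter 1, Section 3., Theorem 3.2]{LM-1} (the same lifting result already used in the proof of Theorem \ref{th:constant-indep-T}). The essential point is that the lift is produced on the half-line $(0,\infty)$, so that the constant $C$ is an absolute constant, in particular \emph{independent of} $\tau$. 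By Theorem \ref{th:continuity-zero-infty} applied to $g$, one also has $\frac{d^jg}{dt^j}\in C_b\bigl([0,\infty);[H,V]_{1-\frac{2j+1}{2m}}\bigr)$ with $\sup_{t}\norm{\frac{d^jg(t)}{dt^j}}_{[H,V]_{1-\frac{2j+1}{2m}}}\le k_3\norm{g}_{Z^m(0,\infty)}$.

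Next I would set $h:=f-g|_{(0,\tau)}\in Z^m(0,\tau)$. By construction $\frac{d^jh(0)}{dt^j}=a_j-a_j=0$ for every $j=0,\dots,m-1$, so $h\in {_0}Z^m(0,\tau)$, and Theorem \ref{th:continuity-zero-time-trace} applies verbatim: it gives $\frac{d^jh}{dt^j}\in C\bigl([0,\tau];[H,V]_{1-\frac{2j+1}{2m}}\bigr)$ and $\sup_{t\in[0,\tau]}\norm{\frac{d^jh(t)}{dt^j}}_{[H,V]_{1-\frac{2j+1}{2m}}}\le k_4\norm{h}_{Z^m(0,\tau)}$ with $k_4$ independent of $\tau$. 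Recombining $f=h+g$, the continuity claim \eqref{eq:continuous-derivative-interpolation} is immediate, since each summand has its $j$-th derivative continuous into the same interpolation space. For \eqref{eq:continuity-time-L2-Hm}, the triangle inequality yields $\sup_t\norm{\frac{d^jf}{dt^j}}\le k_4\norm{h}_{Z^m(0,\tau)}+k_3\norm{g}_{Z^m(0,\infty)}$; bounding $\norm{h}_{Z^m(0,\tau)}\le \norm{f}_{Z^m(0,\tau)}+\norm{g}_{Z^m(0,\tau)}\le \norm{f}_{Z^m(0,\tau)}+\norm{g}_{Z^m(0,\infty)}$ (restriction does not increase the norm) and inserting the lifting estimate for $\norm{g}_{Z^m(0,\infty)}$ produces the asserted bound with a constant $k_5$ independent of $\tau$.

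The only genuine obstacle is the lifting step: I need a right inverse of the trace operator whose operator norm does not deteriorate as $\tau\to\infty$. Constructing the lift directly on $(0,\tau)$ would make the uniformity in $\tau$ delicate; performing it once and for all on $(0,\infty)$ and then restricting to $(0,\tau)$ is precisely what decouples the construction from $\tau$ and delivers the uniform constant. Everything after that is bookkeeping through the triangle inequality and the previously established Theorems \ref{th:continuity-zero-infty} and \ref{th:continuity-zero-time-trace}.
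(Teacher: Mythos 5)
Your proposal is correct and follows essentially the same route as the paper's proof: lift the initial traces via the Lions--Magenes trace theorem on $(0,\infty)$ (so the constant is $\tau$-independent), subtract the restriction of the lift to reduce to ${_0}Z^m(0,\tau)$, apply Theorem \ref{th:continuity-zero-time-trace} together with Theorem \ref{th:continuity-zero-infty}, and recombine by the triangle inequality. The only difference is cosmetic (the paper writes the difference with the opposite sign).
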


\begin{proof}
Using the trace theorem \cite[Chapter 1, Section 3, Theorem 3.2.]{LM-1} (see also \cite[Chapter 1, Section 3, Remark 3.3.]{LM-1}), there exists $u\in Z^m(0,\infty)$ such that 
\[
\frac{d^ju(0)}{dt^j}=\frac{d^jf(0)}{dt^j}\qquad\text{for each }j\in[0,m-1],
\]
and 
\begin{equation}\label{eq:surjectivity-trace}
\norm{u}_{Z^m(0,\infty)}\le c_1\sum^{m-1}_{k=1}\norm{\frac{d^jf(0)}{dt^j}}_{[H,V]_{1-\frac{2j+1}{2m}}}. 
\end{equation}
Let us consider $\hat u:=u|_{[0,\tau]}\in Z^m(0,\tau)$, and $\hat f:=\hat u-f$. Then, $\hat f\in {_0}Z^m(0,\tau)$, and by Theorems \ref{th:continuity-zero-time-trace} and \ref{th:continuity-zero-infty}, we conclude that 
\[
\frac{d^j f}{dt^j}=\frac{d^j \hat f}{dt^j}+\frac{d^j \hat u}{dt^j}\in C\left([0,\tau];[H,V]_{1-\frac{j+1/2}{m}}\right)
\qquad\text{for each $j=0,1,\dots,m-1$.}
\]
In addition, \eqref{eq:continuity-time-L2-Hm} immediately follows by triangle inequality, \eqref{eq:continuity-time-L2-Hm-zero-time-trace}, \eqref{eq:continuity-zero-infty}, and \eqref{eq:surjectivity-trace}. 
\end{proof}

For  $f\in Z^m(0,\tau)$, the property \eqref{eq:continuous-derivative-interpolation} continues to hold without the assumption \eqref{eq:trace-zero-4lift}. However, the estimate \eqref{eq:continuity-time-L2-Hm} should be modified to take into account a possible dependence of the constant $k_5$ on $\tau$. In fact, following the proof of \cite[Chapter 1, Theorem 3.1.]{LM-1}, one can build an extension of $f$ from $\R$ to $V$, whose norm would now depend on $\tau$ (since we can no longer use the lifting of the condition $f(0)$). Let us investigate how the estimate \eqref{eq:continuity-time-L2-Hm} changes in this scenario. Following \cite[Chapter 1, Theorem 3.1.]{LM-1}, we just know that for $f\in Z^m(0,\tau)$, there exists a positive constant $c_\tau$, depending on $\tau$, such that 
\begin{equation}\label{eq:continuity-time-L2-Hm-tau}\tag{\ref{eq:continuity-time-L2-Hm}*}
\sup_{t\in [0,\tau]}\norm{\frac{d^jf(t)}{dt^j}}_{[H,V]_{1-\frac{2j+1}{2m}}}\le c_\tau\norm{f}_{Z^m(0,\tau)}
\end{equation}
for each $j=0,1,\dots,m-1$. Consider the function $f^*:\;t^*\in (0,1)\mapsto f^*(t^*):=f(t^*\tau)\in V$. We notice that 
\[
\norm{f^*}_{Z^m(0,1)}=\left(\frac 1\tau \norm{f}^2_{L^2(0,\tau;V)}+\tau^{2m-1}\norm{\frac{d^m f}{dt^m}}^2_{L^2(0,\tau;H)}\right)^{1/2}.
\]
We apply \eqref{eq:continuity-time-L2-Hm-tau} to $f^*$ with the constant now $c_1$ (obviously independent of $\tau$), and conclude that 
for $j=0,1,\dots,m-1$: 
\begin{equation}\label{eq:continuity-time-L2-Hm-scaled}\begin{split}
\sup_{t\in [0,\tau]}\norm{\frac{d^jf(t)}{dt^j}}_{[H,V]_{1-\frac{2j+1}{2m}}}
&=\tau^{-j}\sup_{t^*\in [0,1]}\norm{\frac{d^jf^*(t^*)}{d(t^*)^j}}_{[H,V]_{1-\frac{2j+1}{2m}}}
\\
&\le c_1\tau^{-j-\frac 12}\left(\norm{f}^2_{L^2(0,\tau;V)}+\tau^{2m}\norm{\frac{d^m f}{dt^m}}^2_{L^2(0,\tau;H)}\right)^{1/2}.
\end{split}\end{equation}

We use the latter estimate to prove the following corollary which is a revised version of Lions-Magenes lemma for slightly more regular functions. 

\begin{corollary}\label{cor:continuity-time-Linfty-Hm}
    If $f\in L^\infty(0,\tau;V)$ and $\displaystyle\frac{d^mf}{dt^m}\in L^\infty(0,\tau;H)$, then \eqref{eq:continuous-derivative-interpolation} still holds, and there exists a positive constant $k_5^*$, independent of $\tau$, such that 
    \begin{equation}\label{eq:continuity-time-Linfty-Hm}
    \sup_{t\in [0,\tau]}\norm{\frac{d^jf(t)}{dt^j}}_{[H,V]_{1-\frac{2j+1}{2m}}}
    \le k_5^*\tau^{-j}\left(\norm{f}^2_{L^\infty(0,\tau;V)}+\tau^{2m}\norm{\frac{d^m f}{dt^m}}^2_{L^\infty(0,\tau;H)}\right)^{1/2}
    \end{equation}
    for $j=0,1,\dots,m-1$. 
\end{corollary}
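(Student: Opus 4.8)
The plan is to read off the claimed $L^\infty$-in-time estimate directly from the scaled Lions--Magenes estimate \eqref{eq:continuity-time-L2-Hm-scaled}, which was already established for an arbitrary $f\in Z^m(0,\tau)$ and, crucially, already carries the \emph{explicit} dependence on $\tau$. The key observation is that, on the \emph{bounded} interval $(0,\tau)$, the hypotheses $f\in L^\infty(0,\tau;V)$ and $d^mf/dt^m\in L^\infty(0,\tau;H)$ are stronger than membership in $Z^m(0,\tau)$, so every result proved for $Z^m(0,\tau)$-functions is at our disposal. No covering of $(0,\tau)$ by subintervals and no fresh extension argument is needed: a single application of \eqref{eq:continuity-time-L2-Hm-scaled} on the full interval will suffice, because the $L^\infty$ norm dominates the $L^2$ average uniformly.

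First I would record the continuity \eqref{eq:continuous-derivative-interpolation}. Since $(0,\tau)$ has finite length, one has the embedding $L^\infty(0,\tau;X)\hookrightarrow L^2(0,\tau;X)$ for any Banach space $X$; applying this with $X=V$ and with $X=H$ shows $f\in Z^m(0,\tau)$. The continuity of the intermediate derivatives in the interpolation spaces then follows at once from the discussion immediately preceding the corollary, where \eqref{eq:continuous-derivative-interpolation} was noted to hold for \emph{every} $f\in Z^m(0,\tau)$, with no need for the trace hypothesis \eqref{eq:trace-zero-4lift} (that hypothesis is required only for a $\tau$-independent bound, not for continuity).

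For the estimate, I would start from \eqref{eq:continuity-time-L2-Hm-scaled} and bound the two $L^2$-in-time norms on its right-hand side by their $L^\infty$-in-time counterparts via
\[
\norm{f}^2_{L^2(0,\tau;V)}\le \tau\,\norm{f}^2_{L^\infty(0,\tau;V)},\qquad
\norm{\frac{d^m f}{dt^m}}^2_{L^2(0,\tau;H)}\le \tau\,\norm{\frac{d^m f}{dt^m}}^2_{L^\infty(0,\tau;H)}.
\]
Substituting these bounds, the quantity under the square root in \eqref{eq:continuity-time-L2-Hm-scaled} becomes at most $\tau\bigl(\norm{f}^2_{L^\infty(0,\tau;V)}+\tau^{2m}\norm{d^mf/dt^m}^2_{L^\infty(0,\tau;H)}\bigr)$, so a factor $\tau^{1/2}$ comes out of the root. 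This $\tau^{1/2}$ exactly cancels part of the prefactor $\tau^{-j-1/2}$, reducing it to $\tau^{-j}$, and we arrive at \eqref{eq:continuity-time-Linfty-Hm} with $k_5^*=c_1$, the constant appearing in \eqref{eq:continuity-time-L2-Hm-scaled}, which is independent of $\tau$.

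I do not expect a genuine obstacle: the whole difficulty of the $\tau$-dependence was already discharged by the scaling argument that produced \eqref{eq:continuity-time-L2-Hm-scaled}, and the only care required here is the bookkeeping of the powers of $\tau$. The one point worth checking explicitly is that the factor $\tau^{2m}$ multiplying the highest-derivative term is preserved correctly when passing from $L^2$ to $L^\infty$, since both norms pick up the \emph{same} extra factor $\tau$; this is precisely what makes the two contributions scale consistently and lets a single $\tau^{1/2}$ be extracted cleanly.
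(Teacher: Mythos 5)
Your proposal is correct and follows essentially the same route as the paper: both deduce $f\in Z^m(0,\tau)$ from the boundedness of the interval, bound the $L^2$-in-time norms by $\sqrt{\tau}$ times the $L^\infty$-in-time norms, and substitute into \eqref{eq:continuity-time-L2-Hm-scaled} so that the extracted factor $\tau^{1/2}$ cancels against the prefactor $\tau^{-j-1/2}$. Your bookkeeping of the powers of $\tau$, including the preservation of the $\tau^{2m}$ weight on the highest derivative, matches the paper's computation exactly.
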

\begin{proof}
Under the stated assumptions, $f\in Z^m(0,\tau)$ and 
\[
\begin{split}
    \norm{f}_{L^2(0,\tau;V)}&\le \sqrt{\tau}\norm{f}_{L^\infty(0,\tau;V)},
    \\
    \norm{\frac{d^m f}{dt^m}}_{L^2(0,\tau;H)}&\le \sqrt{\tau}\norm{\frac{d^m f}{dt^m}}_{L^\infty(0,\tau;H)}. 
\end{split}
\]
Using the latter on the right-hand side of \eqref{eq:continuity-time-L2-Hm-scaled}, we obtain \eqref{eq:continuity-time-Linfty-Hm}. 
\end{proof}

Using the same lift $\hat u$ of the condition at $t=0$ as in Theorem \ref{th:continuity-time-L2-Hm}, plus the extension $\tilde f$ of $\hat f$ defined in \eqref{eq:extension-zero-t0} (with $f$ now replaced by $\hat f$), and the {\em intermediate derivative theorem} \cite[Chapter 1, Section 2.2, Theorem 2.3.]{LM-1}, we have the following additional result.
\begin{theorem}\label{th:intermediate-derivative}
If $f\in Z^m(0,\tau)$ satisfies that 
\[
\frac{d^jf(0)}{dt^j}\in [H,V]_{1-\frac{2j+1}{2m}}\qquad\text{for each }j=0,1,\dots,m-1,
\]
then 
\begin{equation}\label{eq:intermediate-derivative}
\frac{d^j f}{dt^j}\in L^2\left(0,\tau;[H,V]_{1-\frac{j}{m}}\right)
\qquad\text{for each $j=0,1,\dots,m-1$,}
\end{equation}
and there exists a constant $k_6>0$ (independent of $\tau$) such that for each $j=0,1,\dots,m-1$:
\begin{equation}\label{eq:interpolation-L2-Hm}
\norm{\frac{d^jf(t)}{dt^j}}_{L^2\left(0,\tau;[H,V]_{1-\frac{j}{m}}\right)}\le k_6\left(\norm{f}_{Z^m(0,\tau)}+\sum^{m-1}_{j=0}\norm{\frac{d^jf(0)}{dt^j}}_{[H,V]_{1-\frac{2j+1}{2m}}}\right). 
\end{equation}
\end{theorem}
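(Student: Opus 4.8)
The plan is to follow the proof of Theorem~\ref{th:continuity-time-L2-Hm} almost verbatim, but to invoke the \emph{intermediate derivative theorem} \cite[Chapter 1, Section 2.2, Theorem 2.3.]{LM-1} in place of the embedding into continuous functions. In the present notation, that theorem asserts that every $g\in Z^m(-\infty,\infty)$ satisfies $\frac{d^jg}{dt^j}\in L^2\bigl(\R;[H,V]_{1-j/m}\bigr)$ for $j=0,\dots,m-1$, together with
\[
\norm{\frac{d^jg}{dt^j}}_{L^2(\R;[H,V]_{1-j/m})}\le c\,\norm{g}_{Z^m(-\infty,\infty)},
\]
where $c$ does not depend on the length of any interval. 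Because all the lifting and extension operators used below act on the fixed domains $(0,\infty)$ and $\R$, no $\tau$-dependent constant will appear.

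First I would reproduce the lifting step: by the trace theorem \cite[Chapter 1, Section 3, Theorem 3.2.]{LM-1} there exists $u\in Z^m(0,\infty)$ with $\frac{d^ju(0)}{dt^j}=\frac{d^jf(0)}{dt^j}$ for $j=0,\dots,m-1$ and satisfying \eqref{eq:surjectivity-trace}. Setting $\hat u:=u|_{[0,\tau]}$ and $\hat f:=\hat u-f$, we have $\hat f\in{_0}Z^m(0,\tau)$, so $\hat f$ admits the reflection extension $\tilde f\in Z^m(-\infty,\infty)$ of \eqref{eq:extension-zero-t0} (with $f$ replaced by $\hat f$), for which $\norm{\tilde f}_{Z^m(-\infty,\infty)}=\norm{\hat f}_{Z^m(0,\tau)}$. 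Separately, I extend $u$ by reflection to $Z^m(-\infty,\infty)$ exactly as in Theorem~\ref{th:continuity-zero-infty}, with $Z^m(-\infty,\infty)$-norm bounded by $k_2\norm{u}_{Z^m(0,\infty)}$.

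Applying the intermediate derivative theorem to these two extensions and restricting to $(0,\tau)$, I would decompose $\frac{d^jf}{dt^j}=\frac{d^j\hat u}{dt^j}-\frac{d^j\hat f}{dt^j}$ and use the triangle inequality together with the norm-nonincreasing restriction $L^2(\R;\cdot)\to L^2(0,\tau;\cdot)$ to obtain, for each $j=0,\dots,m-1$,
\[
\norm{\frac{d^jf}{dt^j}}_{L^2(0,\tau;[H,V]_{1-j/m})}
\le c\,k_2\norm{u}_{Z^m(0,\infty)}+c\,\norm{\hat f}_{Z^m(0,\tau)}.
\]
Since $\norm{\hat f}_{Z^m(0,\tau)}\le\norm{u}_{Z^m(0,\infty)}+\norm{f}_{Z^m(0,\tau)}$, the estimate \eqref{eq:surjectivity-trace} then yields \eqref{eq:interpolation-L2-Hm} with a constant $k_6$ independent of $\tau$, and the membership \eqref{eq:intermediate-derivative} follows along the way.

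The only point requiring care is the $\tau$-independence of $k_6$. It is secured because the reflection \eqref{eq:extension-zero-t0} preserves the $Z^m$-norm with constant exactly $1$; the lift $u$ lives on the fixed half-line $(0,\infty)$, so the constant in \eqref{eq:surjectivity-trace} does not see $\tau$; and the intermediate derivative theorem is applied only on $\R$. This is precisely why no rescaling argument of the kind used in Corollary~\ref{cor:continuity-time-Linfty-Hm} is needed here: the lifting of the initial traces removes the obstruction that would otherwise force a $\tau$-dependent constant.
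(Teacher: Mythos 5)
Your proposal is correct and follows essentially the same route the paper indicates for this theorem: lift the initial traces via the trace theorem on $(0,\infty)$, reduce to $\hat f\in{_0}Z^m(0,\tau)$, extend by \eqref{eq:extension-zero-t0} (and $u$ by reflection), and apply the intermediate derivative theorem on $\R$, which is exactly why the constant is $\tau$-independent. The paper only sketches this in one sentence; your write-up supplies the same decomposition and triangle-inequality bookkeeping that the sketch leaves implicit.
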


With the same reasoning leading to the estimate \eqref{eq:continuity-time-L2-Hm-scaled}, we also have the following result. 
\begin{corollary}\label{cor:intermediate-derivative-notrace}
If $f\in Z^m(0,\tau)$, then \eqref{eq:intermediate-derivative} holds, and there exists a constant $k_6^*>0$ (independent of $\tau$) such that for each $j=0,1,\dots,m-1$: 
\begin{equation}\label{eq:interpolation-L2-Hm-notrace}
\norm{\frac{d^jf(t)}{dt^j}}_{L^2\left(0,\tau;[H,V]_{1-\frac{j}{m}}\right)}\le k_6^*\tau^{1-j}\left(\norm{f}^2_{L^2(0,\tau;V)}+\tau^{2m}\norm{\frac{d^m f}{dt^m}}^2_{L^2(0,\tau;H)}\right)^{1/2}. 
\end{equation}
\end{corollary}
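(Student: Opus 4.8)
The plan is to reproduce, for the $L^2$-in-time intermediate-derivative norm, the scaling argument that led from \eqref{eq:continuity-time-L2-Hm-tau} to \eqref{eq:continuity-time-L2-Hm-scaled}. The starting point is the untraced, finite-interval version of the intermediate derivative theorem: for every $f\in Z^m(0,\tau)$ one has $\frac{d^j f}{dt^j}\in L^2(0,\tau;[H,V]_{1-\frac{j}{m}})$ and there is a (possibly $\tau$-dependent) constant $c_\tau$ with
\[
\norm{\frac{d^j f}{dt^j}}_{L^2(0,\tau;[H,V]_{1-\frac{j}{m}})}\le c_\tau\norm{f}_{Z^m(0,\tau)},\qquad j=0,\dots,m-1.
\]
This is the exact $L^2$-in-time analogue of \eqref{eq:continuity-time-L2-Hm-tau} and follows in the same way: one extends $f$ from $(0,\tau)$ to $\R$ (the extension constant may depend on $\tau$, since no trace condition is assumed, cf.\ the remark preceding \eqref{eq:continuity-time-L2-Hm-tau}) and applies the intermediate derivative theorem \cite[Chapter 1, Section 2.2, Theorem 2.3.]{LM-1} on $\R$. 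Specializing to the fixed reference interval $\tau=1$ fixes a constant $c_1$ that is independent of $\tau$.

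Next I would pass to the rescaled function $f^*(s):=f(\tau s)$, $s\in(0,1)$, exactly as in the derivation of \eqref{eq:continuity-time-L2-Hm-scaled}. Since $\frac{d^j f^*}{ds^j}(s)=\tau^{j}\frac{d^j f}{dt^j}(\tau s)$, a change of variables yields the scaling identities
\[
\norm{\frac{d^j f}{dt^j}}_{L^2(0,\tau;[H,V]_{1-\frac{j}{m}})}=\tau^{\frac12-j}\norm{\frac{d^j f^*}{ds^j}}_{L^2(0,1;[H,V]_{1-\frac{j}{m}})},
\]
together with $\norm{f^*}_{L^2(0,1;V)}=\tau^{-\frac12}\norm{f}_{L^2(0,\tau;V)}$ and $\norm{\frac{d^m f^*}{ds^m}}_{L^2(0,1;H)}=\tau^{m-\frac12}\norm{\frac{d^m f}{dt^m}}_{L^2(0,\tau;H)}$, whence
\[
\norm{f^*}_{Z^m(0,1)}=\tau^{-\frac12}\left(\norm{f}^2_{L^2(0,\tau;V)}+\tau^{2m}\norm{\frac{d^m f}{dt^m}}^2_{L^2(0,\tau;H)}\right)^{1/2}.
\]
In particular $f\in Z^m(0,\tau)$ is equivalent to $f^*\in Z^m(0,1)$, and the unit-interval membership $\frac{d^j f^*}{ds^j}\in L^2(0,1;[H,V]_{1-\frac{j}{m}})$ transfers back under scaling, giving \eqref{eq:intermediate-derivative}.

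Finally I would apply the $\tau=1$ estimate of the first paragraph to $f^*$ and substitute the three scaling identities above, collecting the resulting powers of $\tau$ to arrive at \eqref{eq:interpolation-L2-Hm-notrace} with $k_6^*$ a fixed multiple of $c_1$, hence independent of $\tau$. I expect the only delicate point to be the first paragraph: securing the untraced intermediate derivative estimate on the reference interval $(0,1)$ with a genuinely fixed constant, which rests on the existence of a bounded extension $Z^m(0,1)\to Z^m(\R)$ (the reflection-type extension of \cite[Chapter 1, Theorem 2.2.]{LM-1}, applied at each endpoint of the fixed interval); on a fixed interval this constant is harmless, and it is precisely the ingredient already used, in its sup-in-time form, to obtain \eqref{eq:continuity-time-L2-Hm-scaled}. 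Everything after that is routine scaling bookkeeping, the only structural difference being that the time integral on the left-hand side contributes an extra factor $\tau^{1/2}$ compared with the sup-in-time estimate \eqref{eq:continuity-time-L2-Hm-scaled}.
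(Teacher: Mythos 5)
Your argument is exactly the paper's: the paper proves this corollary by the one-line remark that it follows ``with the same reasoning leading to the estimate \eqref{eq:continuity-time-L2-Hm-scaled}'', i.e.\ precisely the reference-interval-plus-rescaling scheme you spell out, and your scaling identities and the identification of the extension on the fixed interval $(0,1)$ as the only delicate ingredient are all correct.

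The one point to settle is the final power of $\tau$, which you leave implicit (``collecting the resulting powers of $\tau$''). Carrying out that bookkeeping with your own identities gives
\[
\norm{\tfrac{d^jf}{dt^j}}_{L^2(0,\tau;[H,V]_{1-\frac jm})}
=\tau^{\frac12-j}\norm{\tfrac{d^jf^*}{ds^j}}_{L^2(0,1;[H,V]_{1-\frac jm})}
\le c_1\tau^{\frac12-j}\norm{f^*}_{Z^m(0,1)}
= c_1\tau^{-j}\Bigl(\norm{f}^2_{L^2(0,\tau;V)}+\tau^{2m}\norm{\tfrac{d^mf}{dt^m}}^2_{L^2(0,\tau;H)}\Bigr)^{1/2},
\]
i.e.\ the exponent is $\tau^{-j}$, not the $\tau^{1-j}$ appearing in \eqref{eq:interpolation-L2-Hm-notrace}. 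Your own closing remark already says this: an extra factor $\tau^{1/2}$ on top of the $\tau^{-j-1/2}$ of \eqref{eq:continuity-time-L2-Hm-scaled} is $\tau^{-j}$. The stated exponent $\tau^{1-j}$ cannot be produced by this argument and is in fact false for $j=0$ and small $\tau$ (take $f$ a $V$-valued polynomial of degree $<m$ in $t$, so the second term on the right vanishes and the claimed bound reads $\norm{f}_{L^2(0,\tau;V)}\le k_6^*\tau\norm{f}_{L^2(0,\tau;V)}$). So your proof is the intended one and proves the correct version of the estimate, with $\tau^{-j}$ in place of $\tau^{1-j}$; you should state that exponent explicitly rather than asserting agreement with \eqref{eq:interpolation-L2-Hm-notrace} as written.
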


\end{appendices}

\section*{Acknowledgements}
%

Giusy Mazzone is a member of ``Gruppo Nazionale per l'Analisi Matematica, la Probabilit\`a e le loro Applicazioni'' (GNAMPA) of Istituto Nazionale di Alta Matematica Francesco Severi (INdAM).

\section*{Declarations}




\subsection*{Funding}

The authors gratefully acknowledge the support of the Natural Sciences and Engineering Research Council of Canada (NSERC) through the NSERC Discovery Grants RGPIN-2021-03129.

\subsection*{Conflict of interest/Competing interests}

Not applicable. 

\subsection*{Ethics approval and consent to participate}

Not applicable. 

\subsection*{Consent for publication}

Not applicable. 

\subsection*{Data availability}

Not applicable. 

\subsection*{Materials availability}

Not applicable. 

\subsection*{Code availability}

Not applicable. 

\subsection*{Author contribution}

All authors contributed equally to this work.

\end{document}